\newcommand{\N}{\mathbb{N}}
\newcommand{\R}{\mathbb{R}}
\newcommand{\dHaus}{\, \mathrm{d} \mathcal{H}^{d-1} \,}
\newcommand{\dd}{\, \mathrm{d} \,}
\newcommand{\dz}{\, \mathrm{dz}\,}
\newcommand{\dx}{\, \mathrm{dx}\,}
\newcommand{\ds}{\, \mathrm{ds}\,}
\newcommand{\pd}{\partial}
\newcommand{\eps}{\varepsilon}
\newcommand{\der}{ \mathrm{D} }
\newcommand{\id}{\, \bm{\mathrm{I}}\,}
\newcommand{\abs}[1]{\left| #1 \right|}
\newcommand{\norm}[1]{\| #1 \|}
\newcommand{\inner}[2]{\langle #1 , #2 \rangle}
\newcommand{\tr}[1]{\mathrm{tr} ( #1 )}
\newcommand{\Laplace}{\Delta}
\newcommand{\surf}{\nabla_{\Gamma}}
\renewcommand{\div}{\, \mathrm{div}\,}
\newtheorem{thm}{Theorem}[section]
\newtheorem{lem}[thm]{Lemma}
\newtheorem{rem}{Remark}[section]
\newtheorem{assumption}{Assumption}[section]
\numberwithin{equation}{section}
\begin{document}

\title{Shape optimization for surface functionals in Navier--Stokes flow using a phase field approach}

\author{Harald Garcke \footnotemark[1] \and Claudia Hecht\footnotemark[1] \and Michael Hinze \footnotemark[2]\footnotemark[3]\and Christian Kahle \footnotemark[2] \and Kei Fong Lam\footnotemark[1]}

\date{\today}

\maketitle

\renewcommand{\thefootnote}{\fnsymbol{footnote}}
\footnotetext[1]{Fakult\"at f\"ur Mathematik, Universit\"at Regensburg, 93040 Regensburg, Germany
({\tt \{Harald.Garcke, Claudia.Hecht, Kei-Fong.Lam\}@mathematik.uni-regensburg.de}).}
\footnotetext[2]{Schwerpunkt Optimierung und Approximation, Universit\"at Hamburg, Bundesstrasse 55, 20146 Hamburg, Germany
({\tt \{Michael.Hinze, Christian.Kahle\}@uni-hamburg.de}).}
\footnotetext[3]{Corresponding author.}

\renewcommand{\thefootnote}{\arabic{footnote}}

\begin{abstract}
We consider shape and topology optimization for fluids which are governed by the Navier--Stokes equations. Shapes are modelled with the help
of a phase field approach and the solid body is relaxed to be a porous medium. The phase field method uses a Ginzburg--Landau functional in order to approximate a perimeter penalization. We focus
on surface functionals and carefully introduce a new modelling variant, show existence of minimizers and derive first order necessary conditions. These conditions are related to classical shape derivatives by identifying the sharp interface limit with the help of formally
matched asymptotic expansions. Finally, we present numerical computations based on a Cahn--Hilliard type gradient descent which demonstrate that the method can be used to solve shape optimization
problems for fluids with the help of the new approach.
\end{abstract}

\noindent \textbf{Key words. } Shape optimization, phase-field method, lift, drag, Navier--Stokes equations.\\

\noindent \textbf{AMS subject classification. } 49Q10, 49Q12, 35Q35, 35R35.

\section{Introduction}
Shape optimization problems are a very challenging field in mathematical analysis and has attracted more and more attention in the last decade.  One of the most discussed and oldest problems is certainly the task of finding the shape of a body inside a fluid having the least resistance.  This problem dates back at least to Newton, who proposed this topic in a rotationally symmetric setting.  Nowadays, there are a lot of important industrial applications leading to this kind of questions. Among others we mention in particular the problem of optimizing the shape of airplanes, cars and wind turbine blades in order to have least resistance or biomechanical applications like bypass constructions.  The wide fields of applications may be one of the reasons that shape optimization problems in fluids received growing attention recently.  Nevertheless, those problems turn out to be very challenging and so far no overall mathematical concept has been successful in a general sense.

One of the main difficulties certainly is that shape optimization problems are often not well-posed, i.e., no minimizer exists, compare for instance \cite{book:KawohlCellinaOrnelas00, article:Murat77, incoll:Tartar}.  There are some contributions leading to mathematically well-posed problem formulations, see for instance \cite{article:PlotnikovSokolowski10}, but the geometric restrictions are difficult to handle numerically.  The most common approaches used in practice parametrize the boundary of the unknown optimal shape by functions, see for instance \cite{incoll:BrandenburgLindemannUlbrichUlbrich09, article:Pironneau74}.  However, those formulations do not inherit a minimizer in general.  For numerical simulations typically shape sensitivity analysis is used. Here, one uses local boundary variations in order to find a gradient of the cost function with respect to the design variable, which is in this case the shape of the body. The necessary calculations are carried out without considering the existence or regularity of a minimizer. But in the end one obtains a mathematical structure that can be used for numerical implementations.

In \cite{GarckeHechtNS}, a phase field approach was introduced for minimizing general volume functionals in a Navier--Stokes flow.  For this purpose, the porous medium approach proposed by Borrvall and Petersson \cite{article:BorrvallPetersson03} and a Ginzburg--Landau regularization as in the work of Bourdin and Chambolle \cite{article:BourdinChambolle03} were combined.  The latter is a diffuse interface approximation of a perimeter regularization.  This leads to a model where existence of a minimizer can be guaranteed, and at the same time necessary optimality conditions can be derived and used for numerical simulations, see \cite{garckehinzeetal}.  In particular, this approach replaces the free boundary $\Gamma$ of the body $B$ by a diffuse interface. Hence, it is a priori not clear how to deal with objective functionals that are defined on the free boundary $\Gamma$. 

In this work, we study the following boundary objective functional:
\begin{align}\label{generalfunctional}
\int_{\Gamma} h(x, \nabla \bm{u}, p, \bm{\nu}) \dHaus,
\end{align}
where $h$ is a given function, $\bm{u}$ denotes the velocity field of the fluid, $p$ denotes the pressure, $\bm{\nu}$ is the \textit{inner} unit normal of the fluid region, i.e., pointing from the body $B$ into the complementary fluid region $E = B^{c}$.  The velocity $\bm{u}$ and pressure $p$ are assumed to obey the stationary Navier--Stokes equations inside the fluid region $E$, and the no-slip condition on $\Gamma$, namely,
\begin{subequations}\label{IntroNS}
\begin{alignat}{2}
-\div \bm{\sigma} + (\bm{u} \cdot \nabla) \bm{u} & = \bm{f} && \text{ in } E, \\
\div \bm{u} & = 0 && \text{ in } E, \\
\bm{u} & = \bm{0} && \text{ on } \Gamma,
\end{alignat}
\end{subequations}
where $\bm{\sigma} := \mu \left(\nabla \bm{u} + (\nabla \bm{u})^T \right)- p \id$ denotes the stress tensor of the velocity field $\bm{u}$, $\mu > 0$ denotes the viscosity of the fluid, $\bm{f}$ denotes an external body force, and $\id$ denotes the identity tensor.

An important example of $h$ is the hydrodynamic force component acting on $\Gamma$ with the force direction defined by the unit vector $\bm{a}$:
\begin{align}\label{HydroDynamForce}
h(x, \nabla \bm{u}, p, \bm{\nu}) = \bm{a} \cdot (\bm{\sigma}\bm{\nu}) = \bm{a} \cdot (\mu (\nabla \bm{u} + (\nabla \bm{u})^{T}) - p \id) \bm{\nu},
\end{align}
and so \eqref{generalfunctional} becomes
\begin{align}\label{HydroDynamFunctional}
\int_{\Gamma} \bm{a} \cdot (\bm{\sigma} \bm{\nu}) \dHaus = \bm{a} \cdot \left ( \int_{\Gamma} \bm{\sigma} \bm{\nu} \dHaus \right ).
\end{align}
If $\bm{a}$ is parallel to the direction of the flow, then \eqref{HydroDynamFunctional} represents the drag of the object $B$.  If $\bm{a}$ is perpendicular to the direction of the flow, then \eqref{HydroDynamFunctional} represents the lift of the object.  

In the work at hand we propose an approach on how to deal with boundary objective functionals in the phase field setting.  To be precise, we aim to minimize an appropriate phase field approximation of the functional \eqref{generalfunctional}, and also the functional \eqref{HydroDynamFunctional}, which can be considered as one of the most important objectives in shape optimization in fluids.  The fluid is assumed to be an incompressible, Newtonian fluid described by the stationary Navier--Stokes equations \eqref{IntroNS}.  

For this purpose, we first discuss how we model the integral over the free boundary $\Gamma$ if it is replaced by a diffuse interface and how the normal $\bm{\nu}$ can be defined in this setting, see Section \ref{sec:DerivationPhaseField}.  Afterwards, we analyze the phase field problem for both \eqref{generalfunctional} and \eqref{HydroDynamFunctional} and discuss the existence of a minimizer and optimality conditions, see Section \ref{sec:AnalysisPhaseField}.  In Section \ref{sec:SharpInterfaceAsymp}, we focus on the hydrodynamic force functional \eqref{HydroDynamFunctional} and the corresponding phase field problem is then related to the sharp interface free boundary problem with a perimeter regularization by the method of matched formal asymptotic expansions.  We find that the formal sharp interface limit of the optimality system gives the same results as can be found in the shape sensitivity literature.

We then solve the phase field problem  numerically, see Section
\ref{sec:Numerics}.
For this purpose, we derive a gradient flow equation for the reduced objective functional and
arrive in a Cahn--Hilliard type system.
After time discretization, this system is treated in every time step by a Newton method.
We numerically solve shape optimization problems involving drag and the lift-to-drag ratio.

\section{Notation and problem formulation}\label{sec:SharpInterfaceProblem}
Let us assume that $\Omega \subset \R^{d}$, $d \in \{2,3\}$, is a fixed domain with Lipschitz boundary.  Inside this fixed domain $\Omega$ we may have certain parts filled with fluid, denoted by $E$, and the complement $B:= \overline{\Omega} \setminus E$ is some non-permeable medium.  In the following we will denote by $\bm{\nu}$ the outer unit normal of $B$, i.e., the inner unit normal of the fluid region.  The aim is to minimize the functional, given by \eqref{generalfunctional}, where $\Gamma := \pd B \cap \Omega$, subject to the Navier--Stokes equations \eqref{IntroNS}.  We additionally impose a volume constraint on the amount of fluid. For this purpose we choose $\beta \in (-1,1)$ and only use fluid regions $E \subset \Omega$ fulfilling the constraint $\abs{E}=\frac{(\beta+1)}{2}\abs{\Omega}$.

We prescribe some inflow or outflow regions on the boundary of $\Omega$ and choose for this purpose $\bm{g} \in \bm{H}^{\frac{1}{2}}(\pd \Omega)$ such that $\int_{\pd \Omega} \bm{g} \cdot \bm{\nu}_{\pd \Omega} \; \dHaus = 0$. Additionally, we may have some body force $\bm{f} \in \bm{L}^{2}(\Omega)$ acting on the design domain.  Note that throughout this paper we denote $\R^{d}$-valued functions and spaces consisting of $\R^{d}$-valued functions in boldface.

As already mentioned in the introduction, problems like this are generally not well-posed in the sense that the existence of a minimizer can not be guaranteed.  Hence, we use an additional perimeter regularization.  For this purpose, we add a multiple of the perimeter of the obstacle to the cost functional \eqref{generalfunctional}.  In order to properly formulate the resulting problem we introduce a design function $\varphi:\Omega \to \{ \pm1 \}$, where $\{ \varphi = 1 \} = E$ describes the fluid region and $\{ \varphi = -1 \} = B$ is its complement.  The volume constraint reads in this setting as $\int_{\Omega} \varphi \dx = \beta \abs{\Omega}$.  

The design functions are chosen to be functions of bounded variation, such that the fluid region has finite perimeter, i.e., $\varphi \in BV(\Omega,\{\pm1\})$.  We shall write $P_{\Omega}(E)$ for the perimeter of some set of bounded variation $E \subseteq \Omega$ in $\Omega$. Besides, if $\varphi$ is a function of bounded variation, its distributional derivative $\der \varphi$ is a finite Radon measure and we can define the total variation by $\abs{\der\varphi}(\Omega)$. For $\varphi \in BV(\Omega,\{\pm1\})$, it holds that
\begin{align}\label{HausmeasureRelation}
\abs{\der \varphi}(\Omega)= 2P_{\Omega}(\{\varphi=1\}).
\end{align}  

For a more detailed introduction to the theory of sets of finite perimeter and functions of bounded variation we refer to \cite{book:EvansGariepy, book:Giusti}. We hence arrive in the following space of admissible design functions:
\begin{align}\label{defn:PhiAd0}
\Phi_{ad}^{0}:= \left \{ \varphi \in BV(\Omega,\{\pm 1\}) \mid \int_{\Omega} \varphi \dx = \beta \abs{\Omega} \right \}.
\end{align}


Let $\gamma > 0$ denote the weighting factor for the perimeter regularization.  Then, we arrive at the following shape optimization problem for the functional \eqref{generalfunctional} with additional perimeter regularization:
\begin{align}\label{IntroObjFclt}
\min_{(\varphi,\bm{u}, p)}  J_{0}(\varphi, \bm{u},p) :=  \int_{\Omega} \frac{1}{2} h(x, \nabla \bm{u}, p, \bm{\nu}_{\varphi}) \dd \abs{\der \varphi }+ \frac{\gamma}{2} \abs{ \der \varphi }(\Omega ),
\end{align}
subject to $\varphi \in \Phi_{ad}^{0}$
and $(\bm{u},p) \in \bm{H}^{1}(E) \times L^{2}(E)$ fulfilling 
\begin{subequations}\label{IntroStateEquSharp}
\begin{alignat}{2}
-\mu \Laplace \bm{u} + ( \bm{u} \cdot \nabla) \bm{u} + \nabla p & = \bm{f} &&\text{ in } E = \{ \varphi =1 \},\\
\div \bm{u} & = 0 &&\text{ in } E, \label{NSdiv} \\
\bm{u} & = \bm{g} && \text{ on } \pd \Omega \cap \pd E,\\
\bm{u} & = \bm{0} && \text{ on } \Gamma = \Omega \cap \pd E. \label{NSnoslip}
\end{alignat}
\end{subequations}

Here, we used the relation \eqref{HausmeasureRelation} to replace the perimeter of $E$ with $\frac{1}{2} \abs{\der \varphi}(\Omega)$.  Furthermore, by the polar decomposition 
\begin{align}\label{polardecomp}
\der \varphi = \bm{\nu}_{\varphi} \abs{\der \varphi} \text{ for } \varphi \in BV(\Omega, \{ \pm 1 \}),
\end{align} 
of the Radon measure $\der \varphi$ into a positive measure $\abs{\der \varphi}$ and a $S^{d-1}$-valued function $\bm{\nu}_{\varphi} \in L^{1}\left(\Omega,\abs{\der\varphi} \right)^{d}$, see for instance \cite[Corollary 1.29]{book:Ambrosio}, we replace the product of the normal and the Hausdorff measure in \eqref{HydroDynamFunctional} by $\frac{1}{2}\bm{\nu}_{\varphi} \dd \abs{\der \varphi}$.  In particular, $\bm{\nu}_{\varphi}$ can be considered as a generalised unit normal on $\pd E$. 

We remark that the shape optimization problem \eqref{IntroObjFclt} for the hydrodynamic force component \eqref{HydroDynamForce} have been studied extensively in the literature.  In the work of \cite{article:Bello97}, the boundary integral \eqref{HydroDynamFunctional} is transformed into a volume integral.  This is also done in \cite{incoll:BrandenburgLindemannUlbrichUlbrich_advancedNumMeth_DesignNSflow, article:PlotnikovSokolowski10}, but in the latter, the compressible Navier--Stokes equations are considered.  We also mention \cite{article:Kondoh12}, which utilises the approach of Borrvall and Petersson \cite{article:BorrvallPetersson03} and the volume integral formulation.  The shape derivatives for general volume and boundary objective functionals in Navier--Stokes flow have been derived in \cite{article:SchmidtSchulz10}.  Finally, we mention the work of \cite{incoll:Boisgerault}, which bears the most similarity to our set-up.  Under the assumption that the set $E = \{ \varphi = 1\}$ is $C^{2}$ and that there is a unique, sufficiently regular solution $\bm{u}$ to \eqref{IntroNS}, the analysis of \cite{incoll:Boisgerault} obtained, via the speed method, that the shape derivative of
\begin{align*}
J(E) = \int_{\Gamma} \bm{a} \cdot (\mu (\nabla \bm{u} + (\nabla \bm{u})^{T}) - p \bm{I}) \bm{\nu} \dHaus
\end{align*}
with respect to vector field $V$ is given by (see \cite[Theorem 4, Equation 39]{incoll:Boisgerault}) \footnote{We remark that in \cite{incoll:Boisgerault}, the normal $\bm{n}$ is pointing from the fluid domain to the obstacle, i.e., in comparison with our set-up, $\bm{n} = - \bm{\nu}$.}
\begin{align}\label{ShapeDerivHydroDynamForce}
\der J(E)[V] = \int_{\Gamma} \inner{V(0)}{\bm{\nu}}(\bm{f} \cdot \bm{a} + \mu \pd_{\bm{\nu}} \bm{q} \cdot \pd_{\bm{\nu}} \bm{u}) \dHaus,
\end{align}
where $\bm{q}$ is the solution to the adjoint system (see \cite[Equation 33.2]{incoll:Boisgerault}):
\begin{subequations}\label{IntroAjointEquSharp}
\begin{alignat}{2}
-\mu \Laplace \bm{q} + (\nabla \bm{u})^{T} \bm{q} -  ( \bm{u} \cdot \nabla) \bm{q} + \nabla \pi & = \bm{0} &&\text{ in } E ,\\
\div \bm{q} & = 0 &&\text{ in } E,\\
\bm{q} & = \bm{a} && \text{ on } \Gamma,\\
\bm{q} & = \bm{0} && \text{ on } \pd \Omega \cap \pd E.
\end{alignat}
\end{subequations}
Here, we denote the normal derivative of a scalar $\alpha$ and of a vector $\bm{\beta}$ as
\begin{align}\label{normalderivativevector}
\pd_{\bm{\nu}} \alpha := \nabla \bm{\alpha} \cdot \bm{\nu}, \quad \pd_{\bm{\nu}} \bm{\beta} := (\nabla \bm{\beta}) \bm{\nu}.
\end{align}

We note that as $\bm{u}$ satisfies the no-slip boundary condition \eqref{NSnoslip}, $\bm{u}$ has no tangential components on $\Omega \cap \pd E$.  Thus, we obtain 
\begin{align}\label{decomposition:nablau:surface}
\nabla \bm{u} = \pd_{\bm{\nu}} \bm{u} \otimes \bm{\nu} \text{ on } \Gamma = \Omega \cap \pd E.
\end{align}
Using the divergence free condition \eqref{NSdiv}, and the no-slip condition \eqref{NSnoslip}, we obtain on $\Gamma$:
\begin{align}\label{pdnuUdotnuZero}
0 = \div \bm{u} = \tr{\nabla \bm{u}} = \sum_{i=1}^{d} \pd_{\bm{\nu}} u_{i} \nu_{i} = \pd_{\bm{\nu}} \bm{u} \cdot \bm{\nu} \Longrightarrow (\nabla \bm{u})^{T}  \bm{\nu} = (\pd_{\bm{\nu}} \bm{u} \cdot \bm{\nu}) \bm{\nu} = \bm{0},
\end{align}
which in turn implies that 
\begin{align}\label{HydroDynamForceSim}
J(E) = \int_{\Gamma} \bm{a} \cdot (\bm{\sigma}\bm{\nu})\dHaus = \int_{\Gamma} \bm{a} \cdot (\mu \nabla \bm{u} - p \id)  \bm{\nu} \dHaus.
\end{align}
This is similar to the setting of \cite[Remark 12]{article:SchmidtSchulz10} and 
by following the computations in \cite{article:SchmidtSchulz10} one obtains \eqref{IntroAjointEquSharp} as the adjoint system and the shape derivative of \eqref{HydroDynamForceSim} for a $C^{2}$ domain in the direction of $V$ is \footnote{We remark that in \cite[Remark 12]{article:SchmidtSchulz10} the term $\div_{\Gamma} (\mu (\nabla \bm{u}) \bm{a})$ appears instead of $\div_{\Gamma}( \mu (\nabla \bm{u})^{T} \bm{a})$, which we believe is a typo.}
\begin{equation}\label{SchmidtShapeDerivHydroDynamForce}
\begin{aligned}
\der J(E)[V] & = \int_{\Gamma} \inner{V(0)}{\bm{\nu}} \left ( -\mu \pd_{\bm{\nu}}(\pd_{\bm{\nu}} \bm{u}) \cdot \bm{a} + \pd_{\bm{\nu}} p (\bm{a} \cdot \bm{\nu}) + \mu \pd_{\bm{\nu}} \bm{q} \cdot \pd_{\bm{\nu}} \bm{u} \right ) \dHaus \\
& - \int_{\Gamma} \inner{V(0)}{\bm{\nu}} \div_{\Gamma} \left ( \mu (\nabla \bm{u})^{T} \bm{a} - p \bm{a} \right ) \dHaus,
\end{aligned}
\end{equation}
where $\div_{\Gamma}$ denotes the surface divergence.  We introduce the surface gradient of $f$ on $\Gamma$ by $\nabla_{\Gamma}f$ with components $(\underline{D}_{k}f)_{1 \leq k \leq d}$, and with this definition we obtain $\div_{\Gamma} \bm{v} = \sum_{k=1}^{d} \underline{D}_{k} v_{k}$ for a vector field $\bm{v}$.  Moreover, in components, we have
\begin{align*}
\pd_{\bm{\nu}} (\pd_{\bm{\nu}} \bm{u}) \cdot \bm{a} = \sum_{i,j,k=1}^{d} \nu_{i} \pd_{i}(\nu_{j} \pd_{j} u_{k}) a_{k}.
\end{align*}

\begin{rem}\label{rem:SchmidtBoisgerault}
In \cite[Remark 12]{article:SchmidtSchulz10}, the term $\mu \pd_{\bm{\nu}} (\pd_{\bm{\nu}} \bm{u}) \cdot \bm{a}$ appearing on the right hand side of \eqref{SchmidtShapeDerivHydroDynamForce} is originally given as $\sum_{i,j,k=1}^{d} \nu_{i} \frac{\pd^{2} u_{k}}{\pd x_{i} \pd x_{j}} \nu_{j} a_{k}$.  This is related to $\pd_{\bm{\nu}}(\pd_{\bm{\nu}} \bm{u}) \cdot \bm{a}$ by the formula
\begin{align}\label{interchangederivativesnu}
\sum_{i,j,k=1}^{d} \nu_{i} \frac{\pd^{2} u_{k}}{\pd x_{i} \pd x_{j}} \nu_{j} a_{k} = \pd_{\bm{\nu}} (\pd_{\bm{\nu}} \bm{u}) \cdot \bm{a} - \sum_{i,j,k = 1}^{d} \nu_{i} \pd_{i} \tilde{\nu}_{j} \pd_{j} u_{k} a_{k},
\end{align}
where $\tilde{\bm{\nu}} = (\tilde{\nu}_{j})_{1 \leq j \leq d}$ denotes an extension of $\bm{\nu}$ off the boundary $\Gamma$ to a neighbourhood $U \supset \Gamma$ with $\abs{\tilde{\bm{\nu}}} = 1$ near $\Gamma$ and $\tilde{\bm{\nu}} \mid_{\Gamma} = \bm{\nu}$. 

By \eqref{decomposition:nablau:surface}, we see that $\pd_{j} u_{k} = \pd_{\bm{\nu}} u_{k} \nu_{j}$ on $\Gamma$, and so
\begin{align}
\sum_{i,j,k=1}^{d} \nu_{i} \pd_{i} \tilde{\nu}_{j} \pd_{j} u_{k} a_{k} = \sum_{i,j,k=1}^{d} \nu_{i} \pd_{i} \tilde{\nu}_{j} \nu_{j} \pd_{\bm{\nu}} u_{k} a_{k} = \sum_{i,j,k=1}^{d} \tfrac{1}{2} \nu_{i} \pd_{i} (\abs{\tilde{\nu}_{j}}^{2}) \pd_{\bm{\nu}} u_{k} a_{k} = 0.
\end{align}
Thus, the last term in \eqref{interchangederivativesnu} is zero and we have the relation
\begin{align}\label{relation:Schmidt}
\sum_{i,j,k=1}^{d} \nu_{i} \frac{\pd^{2} u_{k}}{\pd x_{i} \pd x_{j}} \nu_{j} a_{k} = \pd_{\bm{\nu}} (\pd_{\bm{\nu}} \bm{u}) \cdot \bm{a},
\end{align}
when $\bm{u} = \bm{0}$ on $\Gamma$.
\end{rem}
Based on Remark \ref{rem:SchmidtBoisgerault}, if $(\bm{u}, p)$ are sufficiently regular, then a short computation involving \eqref{relation:Schmidt} shows that on $\Gamma$,
\begin{align*}
& \; -\mu \div_{\Gamma} ((\nabla \bm{u})^{T} \bm{a}) - \mu \pd_{\bm{\nu}}(\pd_{\bm{\nu}} \bm{u}) \cdot \bm{a} + \pd_{\bm{\nu}}p (\bm{a} \cdot \bm{\nu}) + \div_{\Gamma} (p \bm{a})\\
= & \; -\mu \sum_{i=1}^{d} \underline{D}_{i} (\pd_{i} u_{j}) a_{j} - \mu \sum_{i,j,k=1}^{d} \nu_{i} \pd_{k} (\pd_{i} u_{j}) \nu_{k} a_{j} + \nabla p \cdot \bm{a} \\
 = & \; -\mu \Laplace \bm{u} \cdot \bm{a} + \nabla p \cdot \bm{a} = \bm{f} \cdot \bm{a} + (\bm{u} \cdot \nabla) \bm{u} \cdot \bm{a} = \bm{f} \cdot \bm{a},
\end{align*}
where we have used the no-slip condition \eqref{NSnoslip}, and hence \eqref{SchmidtShapeDerivHydroDynamForce} is equivalent to \eqref{ShapeDerivHydroDynamForce}.

\section{Derivation of the phase field formulation}\label{sec:DerivationPhaseField}
The problem derived in the previous section has several drawbacks.  First, it is not clear if this is well-posed, i.e., if for every $\varphi \in \Phi_{ad}^{0}$ there is a solution of the state equations \eqref{IntroStateEquSharp} and if there exists a minimizer $(\varphi, \bm{u}, p)$ of the overall problem \eqref{IntroObjFclt}-\eqref{IntroStateEquSharp}.  Second, optimizing in the space $BV(\Omega)$ is not very practical.  Deriving optimality conditions is not easy and it is not clear how to perform numerical simulations on this problem.  Hence, we now want to approximate the complex shape optimization problem \eqref{IntroObjFclt}-\eqref{IntroStateEquSharp} by a problem that can be treated by well-known approaches.  To this end we introduce a diffuse interface version of the free boundary problem by using a phase field approach.  

\subsection{The state equations in the phase field setting}\label{sec:PFState}
In this setting, the design variable $\varphi: \Omega \to \R$ is now allowed to have values in $\R$, instead of only the two discrete values $\pm1$, and inherits $H^{1}(\Omega)$ regularity.  In addition to the two phases $\{\varphi = 1 \}$ (fluid region E) and $\{\varphi = -1\}$ (solid region B), we also have an interfacial region $\{-1< \varphi < 1\}$ which is related to a small parameter $\eps >0$. By \cite{article:Modica87}, we know that the Ginzburg--Landau energy 
\begin{align}\label{defn:GinzburgLandau}
\mathcal{E}_{\eps} : H^{1}(\Omega) \to \R, \quad \mathcal{E}_{\eps}(\varphi):= \int_{\Omega} \frac{\eps}{2} \abs{\nabla \varphi}^{2} \dx+\frac{1}{\eps} \psi(\varphi)\dx
\end{align}
approximates $\varphi \mapsto c_{0} \abs{\der\varphi}(\Omega) = 2c_{0} P_{\Omega}(\{ \varphi = 1 \})$ in the sense of $\Gamma$-convergence.  Here, 
\begin{align}\label{defn:c0}
c_{0} := \frac{1}{2}\int_{-1}^1\sqrt{2\psi(s)} \ds
\end{align}
and $\psi: \R \to \R$ is a potential with two equal minima at $\pm 1$, and in this paper we focus on an arbitrary double-well potential satisfying the assumption below:

\begin{assumption}\label{assump:psi}
Let $\psi \in C^{1,1}(\R)$ be a non-negative function such that $\psi(s) = 0$ if and only if $s \in\{ \pm 1 \}$, and the following growth condition is fulfilled for some constants $c_{1}, c_{2} ,t_{0} > 0$ and $k \geq 2$:
\begin{align*}
c_{1} t^{k} \leq \psi(t) \leq c_{2} t^{k} \quad \forall t \geq t_{0}.
\end{align*}
\end{assumption}

Additionally, we use the so-called porous medium approach for the state equations, see also \cite{GarckeHechtNS, garckehinzeetal}.  This means that, we relax the non-permeability of the solid region $B$ outside the fluid by placing a porous medium of small permeability $(\overline{\alpha}_{\eps})^{-1} \ll 1$ outside the fluid region $E$.  In the interfacial region $\{-1< \varphi <1\}$ we interpolate between the equations describing the flow through the porous medium and the stationary Navier--Stokes equations by using an interpolation function $\alpha_{\eps}$ satisfying the following assumption:
\begin{assumption}\label{assump:alpha}
We assume that $\alpha_{\eps} \in C^{1,1}(\R)$ is non-negative, with $\alpha_{\eps}(1) = 0$, $\alpha_{\eps}(-1)= \overline{\alpha}_{\eps} > 0$, and there exist $s_{a}, s_{b} \in \R$ with $s_{a} \leq -1$ and $s_{b} \geq 1$ such that 
\begin{equation}\label{alphaepsSaSb}
\begin{aligned}
\alpha_{\eps}(s) & = \alpha_{\eps}(s_{a}) \text{ for } s \leq s_{a}, \\
\alpha_{\eps}(s) & = \alpha_{\eps}(s_{b}) \text{ for } s \geq s_{b}.
\end{aligned}
\end{equation}
Moreover, we assume that the inverse permeability vanishes as $\eps \searrow 0$, i.e., $\lim_{\eps \searrow 0} \overline{\alpha}_{\eps} = \infty$. 
\end{assumption}

In particular, we have that 
\begin{align*}
0 \leq \alpha_{\eps}(s) \leq \sup_{t \in [s_{a}, s_{b}]} \alpha_{\eps}(t) < \infty \quad \forall s \in \R,
\end{align*}
i.e., $\alpha_{\eps} \in L^\infty(\R)$.  The resulting state equations for the phase field problem are then given in the strong form by the following system:
\begin{subequations}\label{IntroStateEquPhase}
\begin{alignat}{2}
\label{state1}
\alpha_{\eps}(\varphi) \bm{u} - \mu \Laplace \bm{u} + (\bm{u} \cdot \nabla )\bm{u} + \nabla p &=\bm{f} && \text{ in } \Omega,\\
\label{state2}
\div \bm{u} & = 0 && \text{ in }\Omega,\\
\label{state3}
\bm{u} & = \bm{g} &&\text{ on } \pd \Omega.
\end{alignat}
\end{subequations}
Later we add $\int_{\Omega} \frac{1}{2} \alpha_{\eps}(\varphi)\abs{\bm{u}}^{2}\dx$ to the objective functional and this ensures that in the limit $\eps \searrow 0$, the velocity $\bm{u}$ vanishes outside the fluid region, and hence the medium can really be considered as non-permeable again.

In the following, we will use the following function spaces:
\begin{align*}
\bm{H}^{1}_{0,\sigma}(\Omega):=\left\{\bm{v} \in \bm{H}^{1}_{0}(\Omega) \mid \div \bm{ v} = 0 \right \},\quad \bm{H}^{1}_{\bm{g},\sigma}(\Omega) := \left\{\bm{v} \in \bm{ H}^{1}(\Omega) \mid \bm{v}|_{\pd \Omega} = \bm{g} ,\,\div \bm{v} = 0 \right \},
\end{align*}
and for the pressure we use the space $L^{2}_{0}(\Omega):= \left \{ p \in L^{2}(\Omega)\mid \int_{\Omega} p \dx = 0 \right\}$.  The function space of admissible design functions for the phase field optimization problem will be given correspondingly to \eqref{defn:PhiAd0} as
\begin{align*}
\Phi_{ad}:=\left \{ \varphi \in H^{1}(\Omega) \mid \int_{\Omega} \varphi \dx =\beta \abs{\Omega}\right\}.
\end{align*}

\subsection{The cost functional in the phase field setting}\label{sec:PFCostFunc}
We are now left to transfer the boundary integral in \eqref{IntroObjFclt} to the diffuse interface setting where the free boundary $\Gamma$ is replaced by an interfacial region.  To this end, we apply a result of \cite{article:Modica87} and approximate the perimeter regularization term with $\frac{1}{2c_{0}} \mathcal{E}_{\eps}(\varphi)$.  Meanwhile, keeping in mind the polar decomposition \eqref{polardecomp} and the relation \eqref{HausmeasureRelation}, we consider the vector-valued measure with density $\frac{1}{2} \nabla \varphi$ as an approximation to $\bm{\nu} \dHaus$.  Thus, we may approximate \eqref{IntroObjFclt} with
\begin{align*}
\int_{\Omega} \frac{1}{2} h(x, \nabla \bm{u}, p, \nabla \varphi) \dx + \frac{\gamma}{2 c_{0}} \mathcal{E}_{\eps}(\varphi).
\end{align*} 

Alternatively, we may appeal to the property of equipartition for the Ginzburg--Landau energy, i.e., it holds asymptotically that (see for instance, \eqref{equipartition} in Section \ref{sec:SharpInterfaceAsymp}, or \cite[Section 5.1]{article:Chen96}):
\begin{align*}
\int_{\Omega} \abs{\frac{1}{\eps} \psi(\varphi_{\eps})- \frac{\eps}{2} \abs{ \nabla \varphi_{\eps}}^{2}} \dx \sim 0 \text{ as } \eps \searrow 0.
\end{align*}
Hence, together with \eqref{HausmeasureRelation}, and the fact that $\Gamma$-limit of $\mathcal{E}_{\eps}(\varphi)$ is the functional $c_{0} \abs{ \der \varphi}(\Omega)$, defined for functions with values in $\{\pm 1\}$, and $+\infty$ otherwise, we have loosely speaking
\begin{align}\label{HausdroffPsiApprox}
2c_{0} \mathcal H^{d-1} \lefthalfcup \Gamma \sim c_{0} \abs{\der \varphi} \sim \frac{\eps}{2} \abs{\nabla \varphi}^{2} + \frac{1}{\eps} \psi(\varphi) \sim \frac{2}{\eps}\psi(\varphi),
\end{align}
where $\frac{\eps}{2} \abs{\nabla \varphi}^{2} + \frac{1}{\eps} \psi(\varphi)$ and $\frac{2}{\eps} \psi(\varphi)$ are interpreted as measures on $\Omega$, by using their values as densities.  Here, we have identified $\Gamma = \pd \{\varphi = 1\} \cap \Omega$ with its reduced boundary, then it holds that $\frac{1}{2} \abs{\der \varphi} = \abs{\der\chi_{\{\varphi=1\}}} =\mathcal{H}^{d-1} \lefthalfcup \Gamma$, see for instance \cite[Theorem 3.59]{book:Ambrosio}.

The generalised unit normal $\bm{\nu}$ can be approximated by $\frac{\nabla\varphi}{\abs{\nabla\varphi}}$.  To rewrite this into a more convenient form, which is in particular differentiable with respect to $\varphi$, we use equipartition of energy and replace $\abs{\nabla\varphi}$ by $\frac{1}{\eps}\sqrt{2\psi(\varphi)}$, and obtain the approximation
\begin{align}\label{normalHausdiffuseapprox}
c_{0} \bm{\nu} \mathrm d\mathcal H^{d-1} \sim \eps \frac{\nabla \varphi}{\sqrt{2\psi(\varphi)}} \frac{1}{\eps}\psi(\varphi) \dx = \sqrt{\frac{\psi(\varphi)}{2}} \nabla \varphi  \dx.
\end{align}

Hence, we may also approximate \eqref{IntroObjFclt} with
\begin{align}\label{ObjFctlDerivationPhaseField1}
\frac{1}{c_{0}} \int_{\Omega} \sqrt{\tfrac{\psi(\varphi)}{2}} h(x, \nabla \bm{u}, p, \nabla \varphi) \dx + \frac{\gamma}{2c_{0}} \mathcal{E}_{\eps}(\varphi),
\end{align}
when we extend $h(x, \nabla \bm{u}, p, \cdot)$ from unit vectors to all of $\R^{n}$ such that $h$ is positively one homogeneous with respect to its last variable.  This allows us to extract the factor $\sqrt{\frac{\psi(\varphi)}{2}}$.

We note that in the bulk regions $\{\varphi=\pm1\}$, we have $\psi(\varphi)=0$ and hence the functional \eqref{ObjFctlDerivationPhaseField1} is not differentiable with respect to $\varphi$.  Hence, we add a small constant $\delta_{\eps}$ to $\psi$ in order to have $\psi(s) + \delta_{\eps} > 0$ for all $s \in \R$.  However, we neglect the addition of this constant for the Ginzburg--Landau regularization $\mathcal{E}_{\eps}(\varphi)$ in the objective functional because adding a constant to the cost functional will not change the optimization problem.  

In fact, for the analysis of the phase field problem, it is only important that $\delta_{\eps} > 0$.  In Section \ref{sec:SharpInterfaceAsymp} where we perform a formal asymptotic analysis, we will require $\lim_{\eps \searrow 0} \delta_{\eps} = 0$ at a superlinear rate (see Remark \ref{rem:deltaepslinearscaling}).

\subsection{Optimization problem in the phase field setting}
Combining the above ideas, we arrive in the following phase field approximation:

\begin{equation}\label{IntroObjFcltPhase}
\begin{aligned}
\min_{(\varphi,\bm{u}, p)} J_{\eps}^{h} \left(\varphi,\bm{u}, p \right) := & \int_{\Omega} \frac{1}{2} \alpha_{\eps}(\varphi) \abs{\bm{u}}^{2} +  \frac{1}{2c_{0}} \left (\frac{\eps}{2} \abs{\nabla\varphi}^{2} + \frac{1}{\eps}\psi(\varphi) \right )\dx \\
+ & \int_{\Omega} \mathcal{M}(\varphi)  h(x, \nabla \bm{u}, p, \nabla \varphi) \dx,
\end{aligned}
\end{equation}
subject to $\varphi \in \Phi_{ad}$ and $(\bm{u},p) \in \bm{H}^{1}_{\bm{g},\sigma}(\Omega)\times L^{2}_{0}(\Omega)$ fulfilling
\begin{equation}\label{IntroStateEquPhaseWeak}
\int_{\Omega} \alpha_{\eps}(\varphi)\bm{u}\cdot \bm{v} + \mu \nabla \bm{u} \cdot \nabla \bm{v} + \left(\bm{u} \cdot \nabla\right) \bm{u} \cdot\bm{v} - p \div \bm{v} \dx = \int_{\Omega} \bm{f} \cdot \bm{v} \dx\quad \forall \bm{v} \in \bm{H}^{1}_{0}(\Omega).
\end{equation}

Notice, that \eqref{IntroStateEquPhaseWeak} is a weak formulation of the state equations \eqref{IntroStateEquPhase}.  Moreover, based on the discussions in Section \ref{sec:PFCostFunc}, the function $\mathcal{M}(\varphi)$ can be chosen to be
\begin{align}\label{defn:mathcalL}
\mathcal{M}(\varphi) = \frac{1}{2} \text{ or } \mathcal{M}(\varphi) = \frac{1}{c_{0}} \sqrt{\tfrac{\psi(\varphi) + \delta_{\eps}}{2}}.
\end{align}

The phase field approximation for the shape optimization problem with the hydrodynamic force \eqref{HydroDynamForce} is obtained from \eqref{IntroObjFcltPhase} by substituting
\begin{align*}
h(x, \nabla \bm{u}, p, \nabla \varphi) = \nabla \varphi \cdot (\mu (\nabla \bm{u} + (\nabla \bm{u})^{T}) - p \id) \bm{a}.
\end{align*}
I.e., 
\begin{equation}\label{ObjFunctHydroPhase}
\begin{aligned}
\min_{(\varphi,\bm{u}, p)} J_{\eps} \left(\varphi,\bm{u}, p \right) := & \int_{\Omega} \frac{1}{2} \alpha_{\eps}(\varphi) \abs{\bm{u}}^{2} +  \frac{1}{2c_{0}} \left (\frac{\eps}{2} \abs{\nabla\varphi}^{2} + \frac{1}{\eps}\psi(\varphi) \right )\dx \\
+ &	 \int_{\Omega} \mathcal{M}(\varphi) \nabla \varphi \cdot (\mu (\nabla \bm{u} + (\nabla \bm{u})^{T}) - p \id) \bm{a} \dx,
\end{aligned}
\end{equation}
subject to $\varphi \in \Phi_{ad}$ and $(\bm{u}, p) \in \bm{H}^{1}_{\bm{g}, \sigma}(\Omega) \times L^{2}_{0}(\Omega)$ fulfilling (\ref{IntroStateEquPhaseWeak}).  

\subsection{Possible modifications}

\subsubsection{Double obstacle potential}
We could also use a double obstacle potential $\psi : \R \to \R \cup \{+\infty\}$ instead of the double-well potential in Assumption \ref{assump:psi}, i.e.,
\begin{align}\label{doubleobstacle}
\psi(\varphi)= \begin{cases} \frac{1}{2}(1-\varphi^{2}) & \text{ if } \varphi \in [-1,1], \\
+\infty & \text{ if } \abs{\varphi} > 1.
\end{cases}
\end{align}
Then, one has to treat the constraint $\abs{\varphi} \leq 1$ a.e. in the necessary optimality system either by writing the gradient equation in form of a variational inequality or by including additional Lagrange parameters. Numerical simulations could be implemented by a Moreau-Yosida relaxation as in \cite{garckehinzeetal}. A Moreau-Yosida relaxation also leads to a differentiable double well potential, and here we restrict ourselves to a differentiable potential where both settings can then be included in the above mentioned way.

\subsubsection{Inequality constraint for fluid volume}
Another possible modification of the problem setting would be to replace the equality constraint $\int_{\Omega} \varphi \dx= \beta \abs{\Omega}$ by an inequality constraint $\int_{\Omega} \varphi \dx \leq \beta \abs{\Omega}$. This would make sense in certain settings, if a maximal amount of fluid that can be used during the optimization process is prescribed and not the exact volume fraction. This would not change anything in the analysis, only that the Lagrange multiplier for this constraint would have a sign and an additional complementarity constraint appears in the optimality system.

\subsubsection{Objective functionals with no dependency on the unit normal}
We may also consider objective functionals with no dependence on the normal, i.e., the boundary objective functional \eqref{IntroObjFclt} takes the form
\begin{align}\label{ObjFucNoDep}
\int_{\Gamma} k(x, \nabla \bm{u}, p) \dHaus.
\end{align}
An example of \eqref{ObjFucNoDep} is the best approximation to a target surface pressure distribution in the sense of least squares:
\begin{align*}
k(x, \nabla \bm{u}, p) = \frac{1}{2}\abs{p - p_{d}}^{2},
\end{align*}
where $p_{d}$ denotes the target surface pressure distribution.  Then, using \eqref{HausdroffPsiApprox}, we deduce that the phase field approximation of \eqref{ObjFucNoDep} is given by
\begin{align*}
\frac{1}{c_{0}} \int_{\Omega} \frac{1}{\eps} \psi(\varphi) k(x, \nabla \bm{u}, p) \dx.
\end{align*}
If $k(\cdot, \cdot, \cdot)$ satisfies similar assumptions to Assumptions \ref{assump:generalh} and \ref{assump:regularityh} (see below), one can adapt the proofs of Theorems \ref{t:PhaseFieldExistMin} and \ref{t:GeneralisationOptimality} to obtain existence of a minimiser and the corresponding first order necessary optimality conditions.

\section{Analysis of the phase field problem}\label{sec:AnalysisPhaseField}
In this section we want to analyze the phase field problem \eqref{IntroObjFcltPhase}-\eqref{IntroStateEquPhaseWeak} derived in the previous section as a diffuse interface approximation of the shape optimization problem of minimizing \eqref{generalfunctional} for a Navier--Stokes flow. For this purpose, we introduce some notation for the nonlinearity in the stationary Navier--Stokes equations. We define the trilinear form 
\begin{align*}
b & : \bm{H}^{1}(\Omega) \times \bm{H}^{1}(\Omega) \times \bm{H}^{1}(\Omega) \to \R,  \\
b(\bm{u}, \bm{v}, \bm{w})& := \int_{\Omega}\left(\bm{u} \cdot \nabla \right)\bm{v} \cdot \bm{w} \dx = \sum_{i,j=1}^{d} \int_{\Omega} u_{i} \pd_{i} v_{j} w_{j} \dx.\end{align*}
We directly obtain the following properties:

\begin{lem}\label{l:PropertiesTrilinearForm}
The form $b$ is well-defined and continuous in the space
$\bm{H}^{1}(\Omega) \times \bm{H}^{1}(\Omega) \times \bm{H}^{1}_{0} (\Omega)$.  Moreover we have:
\begin{align}\label{e:ContinuityEstimateTrilinearForm}
\abs{b(\bm{u}, \bm{v}, \bm{w})} \leq K_{\Omega} \norm{\nabla\bm{u}}_{\bm{L}^{2}(\Omega)} \norm{\nabla \bm{v}}_{\bm{L}^{2}(\Omega)} \norm{\nabla \bm{w}}_{\bm{L}^{2}(\Omega)} \quad \forall \bm{u}, \bm{w} \in \bm{H}^{1}_{0}(\Omega), \bm{v} \in \bm{H}^{1}(\Omega),
 \end{align}
with 
\begin{align}\label{KOmega}
K_{\Omega} = \begin{cases}
\frac{1}{2}\abs{\Omega}^{1/2} & \text{ if } d = 2, \\
\frac{2\sqrt{2}}{3}\abs{\Omega}^{1/6} & \text{ if } d = 3.
\end{cases}
\end{align}
Additionally, the following properties are satisfied:
\begin{align}
\label{e:TrilinearformLastTwoEqualZero}
b\left(\bm{u}, \bm{v}, \bm{v}\right) =0 \quad & \forall \bm{u}\in \bm{H}^{1}(\Omega), \div \bm{u} = 0, \quad \bm{v} \in \bm{H}^{1}_{0}(\Omega), \\
\label{e:TrilinearformLastTwoSwitch}
b\left(\bm{u}, \bm{v}, \bm{w}\right) = -b\left(\bm{u}, \bm{w}, \bm{v}\right) \quad & \forall \bm{u} \in \bm{H}^{1}(\Omega), \div \bm{u} = 0 , \quad \bm{v}, \bm{w} \in \bm{H}^{1}_{0}(\Omega).
\end{align}
\end{lem}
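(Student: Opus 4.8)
The plan is to treat the four assertions in three blocks: well-definedness and continuity on the large space via H\"older's inequality and a Sobolev embedding; the sharp estimate \eqref{e:ContinuityEstimateTrilinearForm} by refining this with a Gagliardo--Nirenberg (Ladyzhenskaya) interpolation; and the two algebraic identities by integration by parts.

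For well-definedness and continuity, I would start from $b(\bm u, \bm v, \bm w) = \sum_{i,j}\int_{\Omega} u_i (\partial_i v_j) w_j \dx$ and apply H\"older's inequality with exponents $(4,2,4)$, using $\tfrac14 + \tfrac12 + \tfrac14 = 1$, to obtain $\abs{b(\bm u, \bm v, \bm w)} \le \norm{\bm u}_{\bm L^4(\Omega)}\norm{\nabla \bm v}_{\bm L^2(\Omega)}\norm{\bm w}_{\bm L^4(\Omega)}$. The continuous embedding $\bm H^1(\Omega) \hookrightarrow \bm L^4(\Omega)$, valid for $d \le 4$ and in particular for $d \in \{2,3\}$, bounds the two $\bm L^4$-factors by the full $\bm H^1$-norms, which proves continuity on $\bm H^1(\Omega)\times \bm H^1(\Omega)\times \bm H^1_0(\Omega)$; note that for mere continuity the subspace $\bm H^1_0(\Omega)$ plays no special role, only the embedding for the last argument is needed.

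For the sharp estimate \eqref{e:ContinuityEstimateTrilinearForm}, where now $\bm u, \bm w \in \bm H^1_0(\Omega)$, the aim is to replace the $\bm L^4$-norms of $\bm u$ and $\bm w$ by their gradient $\bm L^2$-norms. Extending a zero-trace function by zero to $\R^d$ makes the scale-invariant Gagliardo--Nirenberg inequality available: for $d=2$ one has $\norm{\bm u}_{\bm L^4}^2 \le C\norm{\bm u}_{\bm L^2}\norm{\nabla \bm u}_{\bm L^2}$, while for $d=3$ the analogous inequality reads $\norm{\bm u}_{\bm L^4}^2 \le C\norm{\bm u}_{\bm L^2}^{1/2}\norm{\nabla \bm u}_{\bm L^2}^{3/2}$. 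Combining with the elementary bound $\norm{\bm u}_{\bm L^2(\Omega)} \le \abs{\Omega}^{1/4}\norm{\bm u}_{\bm L^4(\Omega)}$ (H\"older on a finite-measure domain) and solving for $\norm{\bm u}_{\bm L^4}$ gives $\norm{\bm u}_{\bm L^4} \le c_d\abs{\Omega}^{\theta_d}\norm{\nabla \bm u}_{\bm L^2}$ with $\theta_2 = \tfrac14$, $\theta_3 = \tfrac1{12}$; squaring and inserting into the H\"older bound produces $K_{\Omega} = c_d^2\abs{\Omega}^{2\theta_d}$. I expect the \emph{main obstacle} to be bookkeeping these constants so that they match $\tfrac12\abs{\Omega}^{1/2}$ and $\tfrac{2\sqrt2}{3}\abs{\Omega}^{1/6}$: this forces me to use the optimal constant in the relevant Gagliardo--Nirenberg inequality. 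Indeed the choice $C=\tfrac1{\sqrt2}$ in $d=2$ reproduces $\tfrac12\abs{\Omega}^{1/2}$ after the manipulation above, and the $d=3$ case is analogous with the corresponding optimal constant.

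Finally, for \eqref{e:TrilinearformLastTwoSwitch} and \eqref{e:TrilinearformLastTwoEqualZero} I would add the two orderings, obtaining $b(\bm u, \bm v, \bm w) + b(\bm u, \bm w, \bm v) = \sum_{i}\int_{\Omega} u_i \partial_i(\bm v \cdot \bm w) \dx = \int_{\Omega} \bm u \cdot \nabla(\bm v \cdot \bm w)\dx$, and then integrate by parts. Since $\div \bm u = 0$ the volume term $-\int_{\Omega}(\div \bm u)(\bm v \cdot \bm w)\dx$ vanishes, and since $\bm v, \bm w \in \bm H^1_0(\Omega)$ the product $\bm v \cdot \bm w$ has zero trace so the boundary term vanishes as well; this yields \eqref{e:TrilinearformLastTwoSwitch}, and setting $\bm w = \bm v$ yields \eqref{e:TrilinearformLastTwoEqualZero}. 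The only delicate point is justifying the integration by parts at $\bm H^1$ regularity: since $\bm v, \bm w \in \bm H^1_0(\Omega)\cap \bm L^4(\Omega)$ one has $\bm v \cdot \bm w \in W^{1,4/3}_0(\Omega)$ and $\bm u \in \bm L^4(\Omega)$, so every product is integrable and the formula follows by approximating $\bm u, \bm v, \bm w$ by smooth functions and passing to the limit.
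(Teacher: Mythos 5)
Your proposal is correct in substance, but it is worth noting that the paper does not prove this lemma at all: its ``proof'' consists of citing \cite[Lemmas IX.1.1 and IX.2.1]{book:Galdi}, so what you have written is essentially a reconstruction of the arguments behind those references. Your three blocks are all sound: H\"older with exponents $(4,2,4)$ plus $\bm{H}^{1}\hookrightarrow\bm{L}^{4}$ gives continuity; the symmetrization $b(\bm{u},\bm{v},\bm{w})+b(\bm{u},\bm{w},\bm{v})=\int_{\Omega}\bm{u}\cdot\nabla(\bm{v}\cdot\bm{w})\dx$ followed by integration by parts (justified exactly as you say, via $\bm{v}\cdot\bm{w}\in W^{1,4/3}_{0}(\Omega)$ and density) gives \eqref{e:TrilinearformLastTwoSwitch} and hence \eqref{e:TrilinearformLastTwoEqualZero}; and in $d=2$ the classical Ladyzhenskaya bound $\norm{u}_{L^{4}}^{2}\le 2^{-1/2}\norm{u}_{L^{2}}\norm{\nabla u}_{L^{2}}$ combined with $\norm{u}_{L^{2}}\le\abs{\Omega}^{1/4}\norm{u}_{L^{4}}$ does reproduce $K_{\Omega}=\tfrac12\abs{\Omega}^{1/2}$ exactly, as you claim. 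The one place where you wave your hands is the $d=3$ constant: ``analogous with the corresponding optimal constant'' is not a computation, and in fact your scheme with the standard interpolation inequality $\norm{u}_{L^{4}}\le 2^{1/2}3^{-3/8}\norm{u}_{L^{2}}^{1/4}\norm{\nabla u}_{L^{2}}^{3/4}$ (Galdi's Lemma II.3.2) yields $K_{\Omega}=2^{4/3}3^{-1}\abs{\Omega}^{1/6}$, which is strictly smaller than the stated $\tfrac{2\sqrt2}{3}\abs{\Omega}^{1/6}$ — so \eqref{e:ContinuityEstimateTrilinearForm} follows a fortiori, but you do not literally ``reproduce'' the paper's constant, and you should either carry out that computation or acknowledge that you are proving a (slightly sharper) sufficient bound. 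With that caveat, your self-contained argument buys independence from the reference, at the cost of having to track constants that the paper simply imports.
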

\begin{proof}
The stated continuity and estimate \eqref{e:ContinuityEstimateTrilinearForm} can be found in \cite[Lemma IX.1.1]{book:Galdi} and \eqref{e:TrilinearformLastTwoEqualZero}-\eqref{e:TrilinearformLastTwoSwitch} are considered in \cite[Lemma IX.2.1]{book:Galdi}.
\end{proof}

Besides, we have the following important continuity property:
\begin{lem}\label{l:TrilinearFormStrongCont}
Let $(\bm{u}_{n})_{n\in\N}, (\bm{v}_{n})_{n\in\N} , (\bm{w}_{n})_{n \in \N} \subset \bm{H}^{1}(\Omega)$, $ \bm{u},\bm{v}, \bm{w} \in \bm{H}^{1}(\Omega)$ be such that $\bm{u}_{n} \rightharpoonup \bm{u}$, $\bm{v}_{n} \rightharpoonup \bm{v}$ and $\bm{w}_{n} \rightharpoonup \bm{w}$ in $\bm{H}^{1}(\Omega)$ where $\bm{v}_{n}|_{\pd \Omega} = \bm{v}|_{\pd \Omega}$ for all $n \in \N$.  Then 
\begin{align}\label{e:bbilinearformTwoconvergence}
\lim_{n\to\infty} b (\bm{u}_{n}, \bm{v}_{n}, \tilde{\bm{w}}) = b(\bm{u}, \bm{v}, \tilde{\bm{w}}) \quad \forall \tilde{\bm{w}} \in \bm{H}^{1}(\Omega). 
\end{align}
Moreover, one can show that
\begin{align}\label{e:StatNSStrongCong}
\bm{H}^{1}(\Omega) \times \bm{H}^{1}(\Omega) \ni (\bm{u}, \bm{v})\mapsto b(\bm{u}, \cdot, \bm{v}) \in \bm{H}^{-1}(\Omega)
\end{align}
is strongly continuous, and thus 
\begin{align}\label{e:bbilinearformThreeconvergence}
\lim_{n \to \infty} b(\bm{u}_{n}, \bm{v}_{n}, \bm{w}_{n}) = b(\bm{u}, \bm{v}, \bm{w}).
\end{align}
\end{lem}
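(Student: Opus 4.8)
The plan is to prove the two compactness-type statements \eqref{e:bbilinearformTwoconvergence} and \eqref{e:StatNSStrongCong}--\eqref{e:bbilinearformThreeconvergence} using the compact Sobolev embedding $\bm{H}^{1}(\Omega) \hookrightarrow\hookrightarrow \bm{L}^{q}(\Omega)$, which holds for $q < 6$ when $d=3$ and for all $q < \infty$ when $d=2$. The key mechanism is that weak $\bm{H}^{1}$-convergence upgrades, along a subsequence and then by a standard subsequence-of-subsequence argument for the whole sequence, to strong $\bm{L}^{q}$-convergence, and the trilinear form $b$ involves a product of two factors that can be estimated in $\bm{L}^{q}$ (where strong convergence is available) and one factor controlled in $\bm{L}^{2}$ via its gradient (where only weak convergence is needed, thanks to the derivative sitting on the middle argument). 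I would handle \eqref{e:bbilinearformTwoconvergence} first, since \eqref{e:StatNSStrongCong} is essentially a restatement, and then deduce \eqref{e:bbilinearformThreeconvergence}.

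\textbf{Proof of \eqref{e:bbilinearformTwoconvergence}.} Fix $\tilde{\bm{w}} \in \bm{H}^{1}(\Omega)$ and write the difference as a telescoping sum,
\begin{align*}
b(\bm{u}_{n}, \bm{v}_{n}, \tilde{\bm{w}}) - b(\bm{u}, \bm{v}, \tilde{\bm{w}}) = b(\bm{u}_{n} - \bm{u}, \bm{v}_{n}, \tilde{\bm{w}}) + b(\bm{u}, \bm{v}_{n} - \bm{v}, \tilde{\bm{w}}).
\end{align*}
For the first term I would use the representation $b(\bm{a},\bm{b},\bm{c}) = \sum_{i,j} \int_{\Omega} a_{i} \pd_{i} b_{j} c_{j} \dx$ and estimate it by $\norm{\bm{u}_{n}-\bm{u}}_{\bm{L}^{4}(\Omega)} \norm{\nabla \bm{v}_{n}}_{\bm{L}^{2}(\Omega)} \norm{\tilde{\bm{w}}}_{\bm{L}^{4}(\Omega)}$, so that $\bm{u}_{n} \to \bm{u}$ strongly in $\bm{L}^{4}$ (by compact embedding, valid in both $d=2,3$) together with the boundedness of $\norm{\nabla\bm{v}_{n}}_{\bm{L}^{2}}$ (weak convergence implies bounded) forces this term to zero. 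The second term is more delicate because the derivative falls on $\bm{v}_{n}-\bm{v}$, which only converges weakly; here I would rewrite $b(\bm{u}, \bm{v}_{n}-\bm{v}, \tilde{\bm{w}}) = \sum_{i,j} \int_{\Omega} u_{i} \pd_{i}(v_{n,j} - v_{j}) \tilde w_{j} \dx$ and observe that this is a fixed bounded linear functional of $\bm{v}_{n} - \bm{v}$ applied to an $\bm{H}^{1}$-weakly null sequence, hence it converges to zero directly. \emph{This is precisely where the boundary hypothesis $\bm{v}_{n}|_{\pd\Omega} = \bm{v}|_{\pd\Omega}$ matters}: I would use it to justify integrating by parts (or to stay within a setting where $\bm{v}_n - \bm{v} \in \bm{H}^{1}_{0}$) so that the pairing $\bm{v} \mapsto \int_{\Omega} u_{i}\,\pd_i v_{j}\,\tilde w_{j}\dx$ is a well-defined element of the dual acting on the weakly convergent difference.

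\textbf{Proof of \eqref{e:StatNSStrongCong} and \eqref{e:bbilinearformThreeconvergence}.} For the strong continuity of the map $(\bm{u},\bm{v}) \mapsto b(\bm{u},\cdot,\bm{v}) \in \bm{H}^{-1}(\Omega)$, I would again use the bound $\abs{b(\bm{u},\bm{\phi},\bm{v})} \le \norm{\bm{u}}_{\bm{L}^{4}} \norm{\nabla\bm{\phi}}_{\bm{L}^{2}} \norm{\bm{v}}_{\bm{L}^{4}}$ valid for test functions $\bm{\phi}$, which after taking the supremum over $\norm{\nabla\bm{\phi}}_{\bm{L}^2} \le 1$ shows the operator norm is controlled by $\norm{\bm{u}}_{\bm{L}^{4}} \norm{\bm{v}}_{\bm{L}^{4}}$; since $\bm{u}_{n} \to \bm{u}$ and $\bm{v}_{n} \to \bm{v}$ strongly in $\bm{L}^{4}$ by compact embedding, a telescoping argument identical to the one above gives norm convergence in $\bm{H}^{-1}(\Omega)$. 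Finally, \eqref{e:bbilinearformThreeconvergence} follows by pairing: writing $b(\bm{u}_{n},\bm{v}_{n},\bm{w}_{n})$ as the action of $b(\bm{u}_{n},\cdot,\bm{w}_{n}) \in \bm{H}^{-1}$ on $\bm{v}_{n} \rightharpoonup \bm{v}$, I would combine the just-established strong $\bm{H}^{-1}$-convergence of the functionals with the weak $\bm{H}^{1}$-convergence of $\bm{v}_{n}$ (strong-times-weak passes to the limit) to conclude $b(\bm{u}_{n},\bm{v}_{n},\bm{w}_{n}) \to b(\bm{u},\bm{v},\bm{w})$. \textbf{The main obstacle} is the careful treatment of the term where the gradient lands on the weakly convergent argument, and in particular verifying that the boundary-trace hypothesis legitimately allows one to pass from weak convergence to convergence of the trilinear integral there; the rest is bookkeeping with Hölder's inequality and the compact embedding.
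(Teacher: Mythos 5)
Your proposal is correct and follows essentially the same route as the paper: a telescoping decomposition of the trilinear form, the compact embedding of $\bm{H}^{1}(\Omega)$ into a subcritical $\bm{L}^{q}$ space to handle the two slots carrying no derivative, and weak convergence of $\bm{v}_{n}-\bm{v}$ tested against a fixed bounded functional for the middle slot, which is exactly where the trace hypothesis enters. The only cosmetic differences are that you use the symmetric H\"older splitting $L^{4}\times L^{2}\times L^{4}$ where the paper uses $L^{3}\times L^{2}\times L^{6}$, and that you prove the strong continuity of \eqref{e:StatNSStrongCong} directly via an operator-norm estimate rather than citing \cite{book:Zeidler4}.
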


\begin{proof}
We apply the idea of \cite[Lemma 72.5]{book:Zeidler4} and make in particular use of the compact embedding $\bm{H}^{1}(\Omega)\hookrightarrow \bm{L}^{3}(\Omega)$ and the continuous embedding $\bm{H}^{1}(\Omega) \hookrightarrow \bm{L}^{6}(\Omega)$.  The strong continuity of \eqref{e:StatNSStrongCong} follows from \cite[Lemma 72.5]{book:Zeidler4}.  In addition, from the boundedness of the sequences $(\bm{u}_{n})_{n \in \N}, (\bm{v}_{n})_{n \in \N}, (\bm{w}_{n})_{n \in \N}$, and \eqref{e:StatNSStrongCong}, we have
\begin{align*}
& \; \abs{b(\bm{u}_{n}, \bm{v}_{n}, \bm{w}_{n}) - b(\bm{u}, \bm{v}, \bm{w})} \\
= & \; \abs{b(\bm{u}_{n} - \bm{u}, \bm{v}_{n}, \bm{w}_{n})} + \abs{b(\bm{u}, \bm{v}_{n}, \bm{w}_{n} - \bm{w})} + \abs{b(\bm{u}, \bm{v}_{n} - \bm{v}, \bm{w})} \\ 
\leq & \; \underbrace{\norm{\bm{u}_{n} - \bm{u}}_{\bm{L}^{3}(\Omega)}}_{\xrightarrow{ n \to \infty}0} \underbrace{\norm{\nabla \bm{v}_{n}}_{\bm{L}^{2}(\Omega)} \norm{\bm{w}_{n}}_{\bm{L}^{6}(\Omega)}}_{\leq C} + \underbrace{\norm{\bm{u}}_{\bm{L}^{6}(\Omega)} \norm{\nabla \bm{v}_{n}}_{\bm{L}^{2}(\Omega)}}_{\leq C} \underbrace{\norm{\bm{w}_{n} - \bm{w}}_{\bm{L}^{3}(\Omega)}}_{\xrightarrow{ n \to \infty}0} \\
+ & \; \underbrace{\abs{b(\bm{u}, \bm{v}_{n} - \bm{v}, \bm{w})}}_{\xrightarrow{ n \to \infty} 0 \text{ by } \eqref{e:StatNSStrongCong}}.
\end{align*}
\end{proof}

\subsection{Existence results}
In this section, we want to analyze the solvability of the state equations \eqref{IntroStateEquPhaseWeak}. Afterwards, we will show existence of a minimizer for the overall optimization problem \eqref{IntroObjFcltPhase}-\eqref{IntroStateEquPhaseWeak}.

\begin{lem}\label{l:StateEquSolvable}
Let Assumption \ref{assump:alpha} hold.  Then, for every $\varphi \in L^{1}(\Omega)$ there exists at least one pair $(\bm{u}, p) \in \bm{H}^{1}_{\bm{g},\sigma}(\Omega) \times L^{2}_{0}(\Omega)$ such that the state equations \eqref{IntroStateEquPhase} are fulfilled in the sense of \eqref{IntroStateEquPhaseWeak}.  This solution $(\bm{u},p)$ fulfils the estimate
\begin{align}\label{e:AprioriSolOpE}
\norm{\bm{u}}_{\bm{H}^{1}(\Omega)} + \norm{p}_{L^2(\Omega)} \leq C(\mu,\alpha_{\eps},\bm{f},\bm{g},\Omega),
\end{align}
with a constant $C = C(\mu,\alpha_{\eps}, \bm{f}, \bm{g},\Omega)$ independent of $\varphi$.
\end{lem}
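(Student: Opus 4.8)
The plan is to solve the state problem first in the solenoidal setting and recover the pressure afterwards. Testing \eqref{IntroStateEquPhaseWeak} only against divergence-free test functions $\bm{v}\in\bm{H}^{1}_{0,\sigma}(\Omega)$ annihilates the term $\int_{\Omega}p\div\bm{v}\dx$, so the task reduces to finding $\bm{u}\in\bm{H}^{1}_{\bm{g},\sigma}(\Omega)$ with
\[
\int_{\Omega}\alpha_{\eps}(\varphi)\bm{u}\cdot\bm{v}+\mu\nabla\bm{u}\cdot\nabla\bm{v}\dx+b(\bm{u},\bm{u},\bm{v})=\int_{\Omega}\bm{f}\cdot\bm{v}\dx\quad\forall\,\bm{v}\in\bm{H}^{1}_{0,\sigma}(\Omega).
\]
To treat the inhomogeneous Dirichlet datum I would fix a solenoidal lifting $\bm{G}\in\bm{H}^{1}_{\bm{g},\sigma}(\Omega)$ of $\bm{g}$ (which exists because $\int_{\pd\Omega}\bm{g}\cdot\bm{\nu}_{\pd\Omega}\dHaus=0$) and write $\bm{u}=\bm{w}+\bm{G}$ with unknown $\bm{w}\in\bm{H}^{1}_{0,\sigma}(\Omega)$. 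Crucially, I would select $\bm{G}$ via the Leray--Hopf construction so that, for a prescribed small $\eta>0$, one has $\abs{b(\bm{w},\bm{G},\bm{w})}\le\eta\,\norm{\nabla\bm{w}}_{\bm{L}^{2}(\Omega)}^{2}$ for all $\bm{w}\in\bm{H}^{1}_{0,\sigma}(\Omega)$. Existence of $\bm{w}$ then follows from a Galerkin approximation in a basis of $\bm{H}^{1}_{0,\sigma}(\Omega)$, each finite-dimensional nonlinear system being solvable by Brouwer's fixed point theorem once the a priori bound below is available.

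The a priori estimate is the heart of the matter. Testing the reduced equation with $\bm{v}=\bm{w}$ and using $b(\bm{w}+\bm{G},\bm{w},\bm{w})=0$ from \eqref{e:TrilinearformLastTwoEqualZero} (valid since $\bm{w}+\bm{G}$ is solenoidal) leaves the convective contribution $b(\bm{w},\bm{G},\bm{w})+b(\bm{G},\bm{G},\bm{w})$. The damping splits into $\int_{\Omega}\alpha_{\eps}(\varphi)\abs{\bm{w}}^{2}\dx\ge0$, which I move to the favourable side using $\alpha_{\eps}\ge0$, and a cross term $\int_{\Omega}\alpha_{\eps}(\varphi)\bm{G}\cdot\bm{w}\dx$ bounded by $\norm{\alpha_{\eps}}_{L^{\infty}(\R)}\norm{\bm{G}}_{\bm{L}^{2}(\Omega)}\norm{\bm{w}}_{\bm{L}^{2}(\Omega)}$. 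Combining the Hopf smallness with $\eta<\mu$, the continuity estimate \eqref{e:ContinuityEstimateTrilinearForm} applied to $b(\bm{G},\bm{G},\bm{w})$, Poincar\'e's inequality on $\bm{H}^{1}_{0}(\Omega)$, and Young's inequality to absorb every $\bm{w}$-term into $\tfrac{\mu}{2}\norm{\nabla\bm{w}}_{\bm{L}^{2}(\Omega)}^{2}$, I obtain a bound on $\norm{\nabla\bm{w}}_{\bm{L}^{2}(\Omega)}$ and hence on $\norm{\bm{u}}_{\bm{H}^{1}(\Omega)}$. The key observation for \eqref{e:AprioriSolOpE} is that $\alpha_{\eps}(\varphi)$ enters only through $\norm{\alpha_{\eps}}_{L^{\infty}(\R)}$, which by Assumption \ref{assump:alpha} is finite and independent of $\varphi$; every remaining quantity depends only on $\mu,\bm{f},\bm{g},\Omega$ and the fixed lifting $\bm{G}$, so the constant is uniform in $\varphi$.

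The same bound holds uniformly for the Galerkin approximations $\bm{w}_{n}$, so a subsequence converges weakly in $\bm{H}^{1}_{0}(\Omega)$. I would pass to the limit in the nonlinear term by Lemma \ref{l:TrilinearFormStrongCont}: weak convergence of $\bm{u}_{n}=\bm{w}_{n}+\bm{G}$, whose traces all equal $\bm{g}$, yields $b(\bm{u}_{n},\bm{u}_{n},\bm{v})\to b(\bm{u},\bm{u},\bm{v})$ for each fixed solenoidal $\bm{v}$, while the linear terms pass by weak convergence; a density argument over the basis gives the reduced equation for $\bm{u}$. Finally I recover the pressure: the functional $\bm{v}\mapsto\int_{\Omega}\bm{f}\cdot\bm{v}\dx-\int_{\Omega}\alpha_{\eps}(\varphi)\bm{u}\cdot\bm{v}+\mu\nabla\bm{u}\cdot\nabla\bm{v}\dx-b(\bm{u},\bm{u},\bm{v})$ vanishes on $\bm{H}^{1}_{0,\sigma}(\Omega)$, so by the de Rham theorem (equivalently surjectivity of $\div$ onto $L^{2}_{0}(\Omega)$) there is a unique $p\in L^{2}_{0}(\Omega)$ representing it, and the associated inf--sup inequality bounds $\norm{p}_{L^{2}(\Omega)}$ in terms of the already-controlled data and $\norm{\bm{u}}_{\bm{H}^{1}(\Omega)}$, again with a $\varphi$-independent constant.

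The main obstacle is the a priori estimate, specifically controlling the sign-indefinite convective cross-term $b(\bm{w},\bm{G},\bm{w})$, which cannot be absorbed by the viscous term for an arbitrary lifting; note that the nonnegative damping $\int_{\Omega}\alpha_{\eps}(\varphi)\abs{\bm{w}}^{2}\dx$ gives no help here, since $\alpha_{\eps}$ may vanish on $\{\varphi=1\}$ and so provides no uniform coercivity. This is exactly what the Leray--Hopf choice of $\bm{G}$ resolves. The secondary care-point is the bookkeeping that keeps the constant dependent on $\alpha_{\eps}$ only through $\norm{\alpha_{\eps}}_{L^{\infty}(\R)}$, thereby guaranteeing the independence of $\varphi$ asserted in \eqref{e:AprioriSolOpE}.
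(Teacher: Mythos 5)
Your argument is correct and is, in substance, exactly the proof that the paper outsources: the paper's own proof of this lemma consists of a citation to \cite[Lemma 4]{GarckeHechtNS} for the existence of $\bm{u}$ --- which is the Leray--Hopf lifting plus Galerkin/Brouwer argument you write out, with the extra term handled precisely as you do, namely by discarding $\int_{\Omega}\alpha_{\eps}(\varphi)\abs{\bm{w}}^{2}\dx\ge 0$ and bounding the cross term through $\norm{\alpha_{\eps}}_{L^{\infty}(\R)}<\infty$, which is what makes the constant independent of $\varphi$ --- followed by the same pressure recovery via \cite[Lemma II.2.1.1]{book:Sohr} and the estimate $\norm{p}_{L^{2}(\Omega)}\le c\norm{\nabla p}_{\bm{H}^{-1}(\Omega)}$. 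The one point worth flagging, which applies equally to the paper and to the reference it cites (the paper itself invokes \cite[Lemma IX.4.2]{book:Galdi} later for the lifting $\bm{G}$), is that the Leray--Hopf extension satisfying $\abs{b(\bm{w},\bm{G},\bm{w})}\le\eta\norm{\nabla\bm{w}}_{\bm{L}^{2}(\Omega)}^{2}$ requires the flux of $\bm{g}$ to vanish through each connected component of $\pd\Omega$, whereas Section \ref{sec:SharpInterfaceProblem} only assumes the total flux vanishes; this is a standard, silently shared hypothesis rather than a gap you introduced.
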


\begin{proof}
We refer to \cite[Lemma 4]{GarckeHechtNS}, where the existence and uniqueness statements for the velocity field $\bm{u}$ are discussed. We point out, that the restriction to functions $\varphi \in L^{1}(\Omega)$ with $\abs{\varphi} \leq 1$ a.e. in $\Omega$ used in \cite{GarckeHechtNS} is only necessary because the function $\alpha_{\eps}$ in \cite{GarckeHechtNS} is only defined on the interval $[-1,1]$. But of course, the same arguments apply to our case where $\alpha_{\eps}$ is bounded and $\varphi \in L^{1}(\Omega)$.

Now for every $\varphi \in L^{1}(\Omega)$ and $\bm{u} \in \bm{H}^{1}_{\bm{g},\sigma}(\Omega)$ fulfilling
\begin{align*}
\int_{\Omega} \alpha_{\eps}(\varphi) \bm{u} \cdot \bm{v} + \mu \nabla \bm{u} \cdot \nabla \bm{v} + (\bm{u} \cdot \nabla) \bm{u} \cdot \bm{v} \dx =\int_{\Omega} \bm{f} \cdot \bm{v} \dx \quad \forall \bm{v} \in \bm{H}^{1}_{0,\sigma}(\Omega),
\end{align*}
we find by \cite[Lemma II.2.1.1]{book:Sohr} a unique $p\in L^{2}_{0}(\Omega)$ such that \eqref {IntroStateEquPhaseWeak} together with
\begin{align*}
\norm{p}_{L^{2}(\Omega)} \leq C(\Omega) \norm{\alpha_{\eps}(\varphi) \bm{u} - \mu \Laplace \bm{u} + (\bm{u} \cdot \nabla) \bm{u} - \bm{f}}_{\bm{H}^{-1}(\Omega)}
\end{align*}
is fulfilled.  Combining this with the previous statements we can conclude the lemma.
\end{proof}

This motivates the definition of a set-valued solution operator 
\begin{align}\label{defn:soluoperator}
\bm{S}_{\eps}(\varphi) : =\{(\bm{u}, p) \in \bm{H}^{1}_{\bm{g},\sigma}(\Omega) \times  L^{2}_{0}(\Omega) \mid (\bm{u}, p) \text{ fulfil }\eqref{IntroStateEquPhaseWeak} \} \text{ for }\varphi\in L^{1}(\Omega).
\end{align}

\begin{rem}\label{rem:uniquenessSolnOperator}
If there is some $\bm{u} \in \bm{S}_{\eps}(\varphi)$ with $\norm{\nabla \bm{u}}_{\bm{L}^{2}(\Omega)} < \frac{\mu}{K_{\Omega}}$, where $K_{\Omega}$ is defined in \eqref{KOmega}.  Then $\bm{S}_{\eps}(\varphi) = \{(\bm{u}, p)\}$.  I.e., there is exactly one solution of \eqref{IntroStateEquPhaseWeak} corresponding to $\varphi$ (see for instance \cite[Lemma 11.5]{thesis:Hecht} or \cite[Lemma 5]{GarckeHechtNS}).
\end{rem}

Moreover, we show a certain continuity property of the solution operator:

\begin{lem}\label{l:SolOpCont}
Under Assumption \ref{assump:alpha}, assume $(\varphi_{k})_{k \in \N}\subset L^{1}(\Omega)$ converges strongly to $\varphi \in L^{1}(\Omega)$ in the $L^{1}$-norm and $(\bm{u}_{k} ,p_{k})_{k \in \N} \subset \bm{H}^{1}(\Omega)\times L^{2}(\Omega)$ are given such that $(\bm{u}_{k}, p_{k}) \in \bm{S}_{\eps}(\varphi_{k})$ for all $k \in \N$.  Then there is a subsequence, which will be denoted by the same, such that $(\bm{u}_{k}, p_{k})_{k \in \N}$	converges strongly in $\bm{H}^{1}(\Omega) \times  L^{2}(\Omega)$ to some element $(\bm{u},p) \in \bm{S}_{\eps}(\varphi)$.
\end{lem}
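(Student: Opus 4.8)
The statement is a compactness-and-passage-to-limit result for the set-valued solution operator. Given a strongly convergent sequence of design functions $\varphi_k \to \varphi$ in $L^1(\Omega)$ together with associated state pairs $(\bm{u}_k, p_k) \in \bm{S}_\eps(\varphi_k)$, I want to extract a subsequence converging strongly in $\bm{H}^1(\Omega) \times L^2(\Omega)$ to some $(\bm{u},p) \in \bm{S}_\eps(\varphi)$. The plan is to combine the uniform a priori bound from Lemma \ref{l:StateEquSolvable} (which gives weak compactness), pass to the limit in the weak formulation \eqref{IntroStateEquPhaseWeak} using the strong continuity of the trilinear form from Lemma \ref{l:TrilinearFormStrongCont}, and then upgrade weak convergence to strong convergence by an energy-type argument.

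Let me sketch the approach in this excerpt's notation.

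\begin{proof}
By Lemma \ref{l:StateEquSolvable}, the a priori estimate \eqref{e:AprioriSolOpE} holds with a constant independent of the design function, so $(\bm{u}_k)_{k}$ is bounded in $\bm{H}^1(\Omega)$ and $(p_k)_{k}$ is bounded in $L^2(\Omega)$. Passing to a subsequence (not relabelled), we obtain $\bm{u}_k \rightharpoonup \bm{u}$ in $\bm{H}^1(\Omega)$ and $p_k \rightharpoonup p$ in $L^2(\Omega)$ for some limit $(\bm{u},p) \in \bm{H}^1_{\bm{g},\sigma}(\Omega) \times L^2_0(\Omega)$; the boundary condition, divergence-free constraint, and zero-mean condition are preserved under weak limits since they are closed affine/linear conditions. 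The compact embedding $\bm{H}^1(\Omega) \hookrightarrow \bm{L}^3(\Omega)$ then yields $\bm{u}_k \to \bm{u}$ strongly in $\bm{L}^3(\Omega)$, and by interpolation also in $\bm{L}^2(\Omega)$.

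The first main task is to pass to the limit in the weak formulation \eqref{IntroStateEquPhaseWeak} tested against a fixed $\bm{v} \in \bm{H}^1_0(\Omega)$. The viscous term $\mu \nabla \bm{u}_k \cdot \nabla \bm{v}$ and the pressure term $p_k \div \bm{v}$ pass to the limit by weak convergence. For the convection term I invoke \eqref{e:bbilinearformThreeconvergence} of Lemma \ref{l:TrilinearFormStrongCont} with $\bm{u}_k = \bm{v}_k = \bm{w}_k \rightharpoonup \bm{u}$ (here the common boundary data $\bm{u}_k|_{\pd\Omega} = \bm{g}$ verifies the hypothesis on the middle argument), giving $b(\bm{u}_k, \bm{u}_k, \bm{v}) \to b(\bm{u}, \bm{u}, \bm{v})$. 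The Darcy term $\int_\Omega \alpha_\eps(\varphi_k) \bm{u}_k \cdot \bm{v}$ requires combining the strong $L^1$-convergence $\varphi_k \to \varphi$ with the strong $\bm{L}^2$-convergence of $\bm{u}_k$: since $\alpha_\eps \in C^{1,1}(\R) \cap L^\infty(\R)$, up to a further subsequence $\alpha_\eps(\varphi_k) \to \alpha_\eps(\varphi)$ pointwise a.e. and is uniformly bounded, so dominated convergence gives $\alpha_\eps(\varphi_k)\bm{v} \to \alpha_\eps(\varphi)\bm{v}$ strongly in $\bm{L}^2(\Omega)$, which pairs with $\bm{u}_k \rightharpoonup \bm{u}$ in $\bm{L}^2(\Omega)$. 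Thus $(\bm{u},p)$ satisfies \eqref{IntroStateEquPhaseWeak} for $\varphi$, i.e.\ $(\bm{u},p) \in \bm{S}_\eps(\varphi)$.

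\textbf{Strong convergence.} This is the delicate step. The plan is to test the equation for $\bm{u}_k$ and for $\bm{u}$ with $\bm{v} = \bm{u}_k - \bm{u} \in \bm{H}^1_{0,\sigma}(\Omega)$ (note the boundary data cancel). Subtracting and rearranging, the viscous term produces $\mu \norm{\nabla(\bm{u}_k - \bm{u})}^2_{\bm{L}^2(\Omega)}$. The remaining terms---the Darcy difference, the convection difference, and the force/pressure contributions---all tend to zero: the force and pressure terms vanish because $\bm{u}_k - \bm{u} \rightharpoonup 0$ weakly; the Darcy term is controlled using $\alpha_\eps \geq 0$ together with the strong $\bm{L}^2$-convergence of $\bm{u}_k$; and the convection difference is handled using the strong continuity \eqref{e:StatNSStrongCong} and the antisymmetry \eqref{e:TrilinearformLastTwoEqualZero}, which annihilates the problematic highest-order term. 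I expect the convection term to be the main obstacle, since one must carefully exploit $b(\bm{u}_k, \bm{u}_k, \bm{u}_k - \bm{u}) = -b(\bm{u}_k, \bm{u}_k - \bm{u}, \bm{u}_k) $ is not directly applicable; instead one rewrites the convective contribution so that the strongly convergent factor $\bm{u}_k \to \bm{u}$ in $\bm{L}^3(\Omega)$ neutralises the only weakly convergent factor. Once all extraneous terms are shown to vanish, we conclude $\mu \norm{\nabla(\bm{u}_k - \bm{u})}^2_{\bm{L}^2(\Omega)} \to 0$, which by the Poincar\'e inequality gives $\bm{u}_k \to \bm{u}$ strongly in $\bm{H}^1(\Omega)$. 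Finally, strong convergence of the pressures follows from applying the inf-sup (Ne\v{c}as) estimate of \cite[Lemma II.2.1.1]{book:Sohr} to the difference of the momentum equations: $\norm{p_k - p}_{L^2(\Omega)}$ is bounded by the $\bm{H}^{-1}$-norm of the difference of the momentum residuals, each term of which now converges strongly to zero by the established $\bm{H}^1$-convergence of the velocities and the $\bm{L}^\infty$-bound on $\alpha_\eps$.
\end{proof}
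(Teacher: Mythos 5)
Your proof is correct, but it takes a genuinely different route from the paper's for the two substantive steps. For the identification of the limit, the paper does not pass to the limit in the weak formulation directly; instead it observes that $\bm{u}_{k}$ is the unique minimizer of the convex quadratic functional $F_{k}(\bm{v}) = \int_{\Omega} \tfrac{1}{2}\alpha_{\eps}(\varphi_{k})\abs{\bm{v}}^{2} + \tfrac{\mu}{2}\abs{\nabla\bm{v}}^{2} + (\bm{u}_{k}\cdot\nabla)\bm{u}_{k}\cdot\bm{v} - \bm{f}\cdot\bm{v}\dx$ over $\bm{H}^{1}_{\bm{g},\sigma}(\Omega)$, proves that $F_{k}$ $\Gamma$-converges (weak topology) to the corresponding $F_{\infty}$, and deduces both that the weak limit $\bm{u}$ minimizes $F_{\infty}$ (hence $(\bm{u},p)\in\bm{S}_{\eps}(\varphi)$) and that $F_{k}(\bm{u}_{k})\to F_{\infty}(\bm{u})$; the latter, combined with Lemma \ref{l:ConvPropertiesAlphaEpsTerm} and \eqref{e:bbilinearformThreeconvergence}, yields $\norm{\nabla\bm{u}_{k}}_{\bm{L}^{2}}\to\norm{\nabla\bm{u}}_{\bm{L}^{2}}$ and hence strong $\bm{H}^{1}$-convergence. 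You replace both steps by the more elementary direct arguments: weak-strong passage term by term in \eqref{IntroStateEquPhaseWeak}, and then the standard energy trick of testing the two equations with $\bm{v}=\bm{u}_{k}-\bm{u}\in\bm{H}^{1}_{0,\sigma}(\Omega)$ so that the pressure and force terms drop out, the Darcy difference splits into a nonnegative piece plus a vanishing one, and the convective difference reduces via \eqref{e:TrilinearformLastTwoEqualZero} to $b(\bm{u}_{k}-\bm{u},\bm{u},\bm{u}_{k}-\bm{u})$, which tends to zero by the compact embedding into $\bm{L}^{3}(\Omega)$. Your route avoids the $\Gamma$-convergence machinery and the reinterpretation of the state equation as a minimization problem, at the cost of having to carry out the trilinear-form decomposition explicitly (your sketch correctly identifies the needed cancellation but leaves the decomposition to be filled in); the paper's route buys the norm convergence almost for free from the convergence of the minimal energies. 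The pressure recovery via the estimate of \cite[Lemma II.1.5.4]{book:Sohr} applied to the difference of the momentum residuals is identical in both arguments.
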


\begin{proof}
Let $(\varphi_{k})_{k \in \N}$ and $(\bm{u}_{k}, p_{k})_{k \in \N}$ be chosen as in the statement.  By passing to another subsequence, denoted the same, we can without loss of generality assume that $\varphi_{k} \to \varphi$ almost everywhere.  Invoking \eqref{e:AprioriSolOpE}, we obtain a uniform bound on $(\bm{u}_{k}, p_{k})$ in $\bm{H}^{1}(\Omega) \times  L^{2}(\Omega)$ because $(\bm{u}_{k}, p_{k}) \in \bm{S}_{\eps}(\varphi_{k})$.  And so there is a subsequence, which will be denoted by the same, such that $\bm{u}_{k}$ converges weakly in $\bm{H}^{1}(\Omega)$ and strongly in $\bm{L}^{2}(\Omega)$ to some limit element $\bm{u} \in \bm{ H}^{1}_{\bm{g}, \sigma}(\Omega)$ and $p_{k}$ converges weakly in $L^{2}(\Omega)$ to some limit element $p \in L^{2}_{0}(\Omega)$.

We now aim to show that
\begin{align*}
F_{k} & : \bm{H}^{1}_{\bm{g}, \sigma}(\Omega) \to \R, \\
F_{k}(\bm{v}) & := \int_{\Omega} \frac{1}{2}\alpha_{\eps}(\varphi_{k}) \abs{\bm{v}}^{2} +\frac{\mu}{2}\abs{\nabla \bm{v}}^{2} + (\bm{u}_{k} \cdot \nabla) \bm{u}_{k} \cdot \bm{v} - \bm{f} \cdot \bm{v} \dx,
\end{align*}
$\Gamma$-converges in $\bm{H}^{1}_{\bm{g},\sigma}(\Omega)$ equipped with the weak topology to 
\begin{align*}
F_{\infty} &: \bm{H}^{1}_{\bm{g},\sigma}(\Omega) \to \R, \\
F_{\infty}(\bm{v}) & := \int_{\Omega}\frac{1}{2} \alpha_{\eps} (\varphi) \abs{\bm{v}}^{2} + \frac{\mu}{2} \abs{\nabla \bm{v}}^{2} + (\bm{u} \cdot \nabla ) \bm{u} \cdot \bm{v} - \bm{f} \cdot \bm{v} \dx,
\end{align*}
as $k \to \infty$.  To see this we first notice that for any sequence $(\bm{v}_{k})_{k \in \N}\subseteq \bm{H}^{1}_{\bm{g},\sigma}(\Omega)$ converging weakly in $\bm{H}^{1}(\Omega)$ to $\bm{v} \in \bm{H}^{1}_{\bm{g},\sigma}(\Omega)$, by Fatou's lemma it holds that
\begin{align*}
\int_{\Omega} \alpha_{\eps}(\varphi)\abs{\bm{v}}^{2}\dx \leq \liminf_{k \to \infty} \int_{\Omega} \alpha_{\eps}(\varphi_{k})\abs{\bm{v}_{k}}^{2} \dx. 
\end{align*}
Applying the boundedness and continuity properties of the trilinear form $b(\cdot, \cdot, \cdot)$, see Lemma \ref{l:PropertiesTrilinearForm} and \ref{l:TrilinearFormStrongCont}, we can deduce that $\lim_{k \to \infty} b(\bm{u}_{k}, \bm{u}_{k}, \bm{v}_{k}) = b(\bm{u}, \bm{u}, \bm{v})$.  As the remaining terms of $F_k$ are weakly lower semicontinuous in $\bm{H}^1(\Omega)$ and independent of $\varphi_{k}$, we directly obtain 
\begin{align*}
F_{\infty}(\bm{v}) \leq \liminf_{k\to\infty} F_{k}(\bm{v}_{k}).
\end{align*}

Let $\bm{v} \in \bm{H}^{1}_{\bm{g},\sigma}(\Omega)$ be chosen.  We will show, that the constant sequence $(\bm{v})_{k \in \N}$ defines a recovery sequence.  For this purpose, we notice that due to the boundedness and continuity of $\alpha_{\eps}$, we have from Lebesgue's dominated convergence theorem
\begin{align}\label{convergenceAlphaEpsterm}
\lim_{k \to \infty} \int_{\Omega} \alpha_{\eps}(\varphi_{k})\abs{\bm{v}}^{2} \dx = \int_{\Omega} \alpha_{\eps}(\varphi) \abs{\bm{v}}^{2} \dx. 
\end{align}
Invoking \eqref{e:bbilinearformTwoconvergence} in Lemma \ref{l:TrilinearFormStrongCont}, we deduce that
\begin{align*}
\lim_{k \to \infty} b(\bm{u}_{k}, \bm{u}_{k}, \bm{v}) = b(\bm{u}, \bm{u}, \bm{v}),
\end{align*}
and thus, we obtain that $\lim_{k \to \infty} F_{k}(\bm{v}) = F_{\infty}(\bm{v})$.  This shows that the $\Gamma$-limit of $(F_{k})_{k \in \N}$ in $\bm{H}^{1}_{\bm{g}, \sigma}(\Omega)$ with respect to the weak topology equals $F_{\infty}$.

Now we notice, that $\bm{u}_{k}$ is exactly the unique minimizer of $F_{k}$ in $\bm{H}^{1}_{\bm{g},\sigma}(\Omega)$, as it fulfils per definition the necessary and sufficient first order optimality conditions for the convex optimization problem $\min_{\bm{u} \in \bm{H}^{1}_{\bm{g},\sigma}(\Omega)}F_{k}(\bm{u})$.  Hence, the weak $\bm{H}^{1}(\Omega)$ limit of $(\bm{u}_{k})_{k \in \N}$, which is $\bm{u} \in \bm{H}^{1}_{\bm{g},\sigma}(\Omega)$, is the unique solution of $\min_{\bm{u} \in \bm{H}^{1}_{\bm{g},\sigma}(\Omega)} F_{\infty}(\bm{u})$, thus it holds that
\begin{align}\label{e:SEpsContinuousStateEquDivForm}
\int_{\Omega} \alpha_{\eps}(\varphi) \bm{u} \cdot \bm{v} + \mu \nabla \bm{u} \cdot \nabla \bm{v} + (\bm{u} \cdot \nabla) \bm{u} \cdot \bm{v} \dx = \int_{\Omega} \bm{f} \cdot \bm{v} \dx \quad \forall \bm{v} \in \bm{H}^{1}_{0,\sigma}(\Omega).
\end{align}
By \cite[Lemma II.2.1.1]{book:Sohr} we can associate to \eqref{e:SEpsContinuousStateEquDivForm} a unique $\tilde{p} \in L^{2}_{0}(\Omega)$ such that \eqref{IntroStateEquPhaseWeak} is fulfilled, and hence $\tilde{p} = p$.  Altogether we have shown $(\bm{u}, p)\in \bm{S}_{\eps}(\varphi)$.

To show the strong convergence in $\bm{H}^{1}(\Omega) \times  L^{2}(\Omega)$, we note that from the $\Gamma$-convergence of $(F_{k})_{k \in \N}$ to $F_{\infty}$ we obtain additionally that $\lim_{k \to \infty} F_{k}(\bm{u}_{k}) = F_{\infty}(\bm{u})$.  Invoking Lemma \ref{l:ConvPropertiesAlphaEpsTerm} below we find
\begin{align*}
\lim_{k \to \infty} \int_{\Omega} \alpha_{\eps}(\varphi_{k}) \abs{\bm{u}_{k}}^{2} \dx = \int_{\Omega} \alpha_{\eps}(\varphi) \abs{\bm{u}}^{2} \dx.
\end{align*} 
In addition, by means of \eqref{e:bbilinearformThreeconvergence} from Lemma \ref{l:TrilinearFormStrongCont} we have 
\begin{align*}
\lim_{k \to \infty} b(\bm{u}_{k}, \bm{u}_{k}, \bm{u}_{k}) = b(\bm{u}, \bm{u}, \bm{u}). 
\end{align*}
These two results allow us to deduce from the convergence of the minimal functional values of $(F_{k})_{k \in \N}$ that $\lim_{k \to \infty} \int_{\Omega} \abs{\nabla \bm{u}_{k}}^{2} \dx = \int_{\Omega} \abs{\nabla \bm{u}}^{2} \dx$. Then, together with $\bm{u}_{k} \rightharpoonup \bm{u}$ in $\bm{H}^{1}(\Omega)$ this yields that $\lim_{k\to\infty}\norm{\bm{u}_{k} - \bm{u}}_{\bm{H}^{1}(\Omega)}=0$.

Subtracting the state equations \eqref {IntroStateEquPhaseWeak} written for $\varphi$ from the state equations \eqref {IntroStateEquPhaseWeak} written for $\varphi_{k}$, we find from Lemma \ref{l:ConvPropertiesAlphaEpsTerm} below that
\begin{align*}
\int_{\Omega} (p_{k} - p) \div \bm{v} \dx &=\int_{\Omega} (\alpha_{\eps}(\varphi_{k}) \bm{u}_{k} - \alpha_{\eps}(\varphi) \bm{u}) \cdot \bm{v} + \mu \nabla( \bm{u}_{k} - \bm{u}) \cdot \nabla \bm{v} \dx \\
& + b (\bm{u}_{k}, \bm{u}_{k}, \bm{v}) - b(\bm{u}, \bm{u}, \bm{v}) \\
&\leq \underbrace{\norm{\alpha_{\eps}(\varphi_{k}) \bm{u}_{k} - \alpha_{\eps}(\varphi) \bm{u}}_{\bm{L}^{2}(\Omega)}}_{\xrightarrow{k \to \infty}0} \norm{\bm{v}}_{\bm{L}^{2}(\Omega)} + \mu \underbrace{\norm{\bm{u}_{k} - \bm{u}}_{\bm{H}^{1}(\Omega)}}_{\xrightarrow{k \to \infty} 0} \norm{\bm{v}}_{\bm{H}^{1}(\Omega)} \\
&+ \underbrace{\norm{b(\bm{u}_{k}, \bm{u}_{k}, \cdot) - b(\bm{u}, \bm{u}, \cdot)}_{\bm{H}^{-1}(\Omega)}}_{\xrightarrow{k \to \infty}0} \norm{\bm{v}}_{\bm{H}^{1}(\Omega)}.
\end{align*}
Thus $\lim_{k \to \infty} \norm{\nabla (p_{k} - p)}_{\bm{H}^{-1}(\Omega)} = 0$. Using now the pressure estimate, see for instance \cite[Lemma II.1.5.4]{book:Sohr}, we find
\begin{align*}
\norm{p_{k} - p}_{L^{2}(\Omega)} \leq c \norm{\nabla (p_{k} - p)}_{\bm{H}^{-1}(\Omega)}\xrightarrow{k\to\infty}0.
\end{align*}
Therefore, we deduce that $(p_{k})_{k \in \N}$ converges strongly in $L^{2}(\Omega)$ to $p$.
\end{proof}

In the previous proof we made use of the following lemma:

\begin{lem}\label{l:ConvPropertiesAlphaEpsTerm}
Under Assumption \ref{assump:alpha}, assume that for  $(\varphi_{k})_{k \in \N}\subset L^{1}(\Omega)$, $(\bm{u}_{k})_{k \in \N}\subset \bm{L}^{2}(\Omega)$ and $\varphi \in L^{1}(\Omega)$, $\bm{u} \in \bm{L}^{2}(\Omega)$,
\begin{align*}
\lim_{k \to \infty} \norm{\varphi_{k} -\varphi}_{L^{1}(\Omega)} = 0, \quad \varphi_{k} \to \varphi \text{ a.e. and } \lim_{k\to\infty} \norm{\bm{u}_{k} - \bm{u}}_{\bm{L}^{2}(\Omega)}=0.
\end{align*}
Then it holds that
\begin{align*}
\lim_{k \to \infty} \int_{\Omega} \alpha_{\eps}(\varphi_{k})\abs{\bm{u}_{k}}^{2} \dx = \int_{\Omega} \alpha_{\eps}(\varphi) \abs{\bm{u}}^{2} \dx \text{ and } \lim_{k \to \infty} \norm{\alpha_{\eps}(\varphi_{k}) \bm{u}_{k} - \alpha_{\eps}(\varphi) \bm{u}}_{\bm{L}^{2}(\Omega)}=0.
\end{align*}
\end{lem}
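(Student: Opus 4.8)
I want to prove both limits in Lemma~\ref{l:ConvPropertiesAlphaEpsTerm}, and the natural strategy is to exploit the strong $L^1$ convergence $\varphi_k \to \varphi$ together with the almost-everywhere convergence and the fact that $\alpha_\eps$ is bounded and (Lipschitz) continuous by Assumption~\ref{assump:alpha}. The key observation is that $\alpha_\eps(\varphi_k) \to \alpha_\eps(\varphi)$ almost everywhere (by continuity of $\alpha_\eps$ and a.e.\ convergence of $\varphi_k$), and $0 \leq \alpha_\eps \leq \norm{\alpha_\eps}_{L^\infty(\R)}$ uniformly. So the whole difficulty is just to upgrade pointwise convergence of the integrands to convergence of the integrals, given that we only control $\bm{u}_k$ in $\bm{L}^2(\Omega)$.

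**First limit.** For the statement $\int_\Omega \alpha_\eps(\varphi_k)\abs{\bm{u}_k}^2 \dx \to \int_\Omega \alpha_\eps(\varphi)\abs{\bm{u}}^2 \dx$, I would split the difference as
\begin{align*}
\int_\Omega \alpha_\eps(\varphi_k)\abs{\bm{u}_k}^2 - \alpha_\eps(\varphi)\abs{\bm{u}}^2 \dx
= \int_\Omega \alpha_\eps(\varphi_k)\bigl(\abs{\bm{u}_k}^2 - \abs{\bm{u}}^2\bigr) \dx
+ \int_\Omega \bigl(\alpha_\eps(\varphi_k) - \alpha_\eps(\varphi)\bigr)\abs{\bm{u}}^2 \dx.
\end{align*}
The first integral is controlled using $\abs{\abs{\bm{u}_k}^2 - \abs{\bm{u}}^2} = \abs{\bm{u}_k - \bm{u}}\,\abs{\bm{u}_k + \bm{u}}$, the $L^\infty$ bound on $\alpha_\eps$, and Cauchy--Schwarz, giving a bound by $\norm{\alpha_\eps}_{L^\infty}\norm{\bm{u}_k - \bm{u}}_{\bm{L}^2}\bigl(\norm{\bm{u}_k}_{\bm{L}^2} + \norm{\bm{u}}_{\bm{L}^2}\bigr)$, which tends to zero since $\bm{u}_k \to \bm{u}$ in $\bm{L}^2$ and the norms $\norm{\bm{u}_k}_{\bm{L}^2}$ are bounded. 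For the second integral, the integrand $\bigl(\alpha_\eps(\varphi_k) - \alpha_\eps(\varphi)\bigr)\abs{\bm{u}}^2$ converges to zero almost everywhere (continuity of $\alpha_\eps$) and is dominated by the fixed $\bm{L}^1$ function $2\norm{\alpha_\eps}_{L^\infty}\abs{\bm{u}}^2$, so Lebesgue's dominated convergence theorem applies.

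**Second limit.** For $\norm{\alpha_\eps(\varphi_k)\bm{u}_k - \alpha_\eps(\varphi)\bm{u}}_{\bm{L}^2} \to 0$, I would again split,
\begin{align*}
\norm{\alpha_\eps(\varphi_k)\bm{u}_k - \alpha_\eps(\varphi)\bm{u}}_{\bm{L}^2}
\leq \norm{\alpha_\eps(\varphi_k)(\bm{u}_k - \bm{u})}_{\bm{L}^2}
+ \norm{\bigl(\alpha_\eps(\varphi_k) - \alpha_\eps(\varphi)\bigr)\bm{u}}_{\bm{L}^2}.
\end{align*}
The first term is bounded by $\norm{\alpha_\eps}_{L^\infty}\norm{\bm{u}_k - \bm{u}}_{\bm{L}^2} \to 0$. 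For the second term, I look at $\int_\Omega \abs{\alpha_\eps(\varphi_k) - \alpha_\eps(\varphi)}^2\abs{\bm{u}}^2 \dx$: the integrand converges to zero a.e.\ and is dominated by $4\norm{\alpha_\eps}_{L^\infty}^2\abs{\bm{u}}^2 \in L^1(\Omega)$, so dominated convergence finishes it.

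**Main obstacle.** There is no genuine obstacle here; the lemma is a routine application of dominated convergence combined with the uniform $L^\infty$ bound on $\alpha_\eps$. The only point requiring care is that the a.e.\ convergence $\varphi_k \to \varphi$ (rather than mere $L^1$ convergence) is what lets me invoke continuity of $\alpha_\eps$ pointwise; this hypothesis is supplied explicitly in the statement, which is precisely why it was arranged in the calling proof of Lemma~\ref{l:SolOpCont} to pass to a subsequence converging almost everywhere. The uniform $L^2$ bound on $(\bm{u}_k)$, needed in the first limit, comes for free from the strong $\bm{L}^2$ convergence.
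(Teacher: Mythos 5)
Your proposal is correct and follows essentially the same route as the paper: the same two-term decompositions, the Cauchy--Schwarz estimate with the $L^{\infty}$ bound on $\alpha_{\eps}$ for the terms involving $\bm{u}_{k}-\bm{u}$, and dominated convergence (via a.e.\ convergence of $\varphi_{k}$ and continuity of $\alpha_{\eps}$) for the terms involving $\alpha_{\eps}(\varphi_{k})-\alpha_{\eps}(\varphi)$. No gaps.
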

\begin{proof}
Using the ideas of \cite[Theorem 5.1]{thesis:Hecht} and \cite[Theorem 1]{GarckeHechtNS} we find that
\begin{align*}
\abs{\int_{\Omega} \alpha_{\eps} (\varphi_{k})\abs{\bm{u}_{k}}^{2} - \alpha_{\eps}(\varphi) \abs{\bm{u}}^{2} \dx} & = \int_{\Omega} \alpha_{\eps}(\varphi_{k}) \left( \abs{\bm{u}_{k}}^{2} - \abs{\bm{u}}^{2} \right) \dx \\
& + \int_{\Omega} (\alpha_{\eps}(\varphi_{k}) - \alpha_{\eps}(\varphi))\abs{\bm{u}}^{2}\dx,
\end{align*}
and from $\alpha_{\eps} \in L^{\infty}(\R)$ we obtain
\begin{align*}
\int_{\Omega} \alpha_{\eps}(\varphi_{k}) \left(\abs{\bm{u}_{k}}^{2} - \abs{\bm{u}}^{2} \right) \dx \leq \norm{\alpha_{\eps}}_{L^{\infty}(\R)} \norm{\bm{u}_{k} + \bm{u}}_{\bm{L}^{2}(\Omega)} \norm{\bm{u}_{k} - \bm{u}}_{\bm{L}^{2}(\Omega)} \xrightarrow{k\to\infty}0.
\end{align*}
Moreover, the uniform bound on $\alpha_{\eps}$ yields by Lebesgue's dominated convergence theorem
\begin{align*}
\lim_{k \to \infty} \int_{\Omega} (\alpha_{\eps} (\varphi_{k}) - \alpha_{\eps}(\varphi)) \abs{\bm{u}}^{2}\dx = 0,
\end{align*}
which combined with the previous step yields 
the first assertion.

Using a similar idea we find
\begin{align*}
\norm{\alpha_{\eps}(\varphi_{k}) \bm{u}_{k} - \alpha_{\eps}(\varphi) \bm{u}}_{\bm{L}^{2}(\Omega)} & \leq \norm{\alpha_{\eps}(\varphi_{k})(\bm{u}_{k} - \bm{u})}_{\bm{L}^{2}(\Omega)} + \norm{(\alpha_{\eps}(\varphi_{k}) - \alpha_{\eps}(\varphi)) \bm{u}}_{\bm{L}^{2}(\Omega)} \\
& \leq \norm{\alpha_{\eps}}_{L^{\infty}(\R)}\norm{\bm{u}_{k} - \bm{u}}_{\bm{L}^{2}(\Omega)} + \norm{(\alpha_{\eps}(\varphi_{k}) - \alpha_{\eps}(\varphi))\bm{u}}_{\bm{L}^{2}(\Omega)}\xrightarrow{k\to\infty}0,
\end{align*}
where we applied Lebesgue's dominated convergence theorem in order to deduce from $\alpha_{\eps} \in L^{\infty}(\R)$ that $\lim_{k\to\infty}\norm{(\alpha_{\eps}(\varphi_{k}) - \alpha_{\eps}(\varphi))\bm{u}}_{\bm{L}^{2}(\Omega)}=0$.
\end{proof}

We make the following assumption regarding $h$:
\begin{assumption}\label{assump:generalh} Let
$h : \Omega \times \R^{d \times d} \times \R \times \R^{d} \to \R$ be a Carath\'{e}odory function, which fulfils
\begin{enumerate}
\item $h(\cdot, \bm{A}, s, \bm{w}) : \Omega \to \R$ is measurable for each $\bm{w} \in \R^{d}, s \in \R, \bm{A} \in \R^{d \times d}$, and
\item $h(x, \cdot, \cdot, \cdot) : \R^{d \times d} \times \R \times \R^{d} \to \R$ is continuous for almost every $x \in \Omega$.
\end{enumerate}
Moreover, there exist non-negative functions $a \in L^{1}(\Omega)$, $b_{1}, b_{2}, b_{3} \in L^{\infty}(\Omega)$ such that for almost every $x \in \Omega$ it holds
\begin{align*}
\abs{h(x, \bm{A}, s, \bm{w})} \leq a(x) + b_{1}(x) \abs{\bm{A}}^{2} + b_{2}(x) \abs{s}^{2} + b_{3}(x) \abs{\bm{w}}^{2},
\end{align*}
for all $\bm{w} \in \R^{d}, s \in \R, \bm{A} \in \R^{d \times d}$.  

Furthermore, the functional $\mathcal{H} : \bm{H}^{1}(\Omega) \times L^{2}(\Omega) \times H^{1}(\Omega) \to \R$ defined as
\begin{align*}
\mathcal{H}(\bm{u}, p, \varphi) & := \int_{\Omega} \mathcal{M}(\varphi) h(x, \nabla \bm{u}, p, \nabla \varphi) \dx, 
\end{align*}
satisfy the following properties
\begin{enumerate}
\item[(i)] $\mathcal{H} \mid_{\bm{H}^{1}_{\bm{g}, \sigma}(\Omega) \times L^{2}_{0}(\Omega) \times \Phi_{ad}}$ is bounded from below, and
\item[(ii)] for all $\varphi_{n} \rightharpoonup \varphi$ in $H^{1}(\Omega)$, $\bm{u}_{n} \to \bm{u}$ in $\bm{H}^{1}(\Omega)$, $p_{n} \to p$ in $L^{2}(\Omega)$, it holds that
\begin{align*}
\mathcal{H}(\bm{u}, p, \varphi) \leq \liminf_{n \to \infty} \mathcal{H}(\bm{u}_{n}, p_{n}, \varphi_{n}).
\end{align*}
\end{enumerate}
\end{assumption}

We then obtain the following existence result for \eqref{IntroObjFcltPhase}-\eqref{IntroStateEquPhaseWeak}:

\begin{thm}\label{t:PhaseFieldExistMin}
Under Assumptions \ref{assump:psi}, \ref{assump:alpha} and \ref{assump:generalh}, there exists at least one minimizer of the optimal control problem \eqref{IntroObjFcltPhase}-\eqref{IntroStateEquPhaseWeak}.
\end{thm}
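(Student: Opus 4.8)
The plan is to apply the direct method of the calculus of variations, using the continuity and lower semicontinuity properties established in the preceding lemmas to pass to the limit along a minimizing sequence.

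First I would check that the problem is well posed in the sense that the infimum is finite. The admissible set is nonempty: for any $\varphi \in \Phi_{ad}$, Lemma \ref{l:StateEquSolvable} provides a pair $(\bm{u}, p) \in \bm{S}_{\eps}(\varphi)$. Moreover $J_{\eps}^{h}$ is bounded from below, since the term $\tfrac12 \alpha_{\eps}(\varphi)\abs{\bm{u}}^{2}$ and the Ginzburg--Landau term are non-negative (using $\alpha_{\eps} \geq 0$ and $\psi \geq 0$ from Assumptions \ref{assump:alpha} and \ref{assump:psi}), while $\mathcal{H}$ is bounded from below by Assumption \ref{assump:generalh}(i). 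Hence $m := \inf J_{\eps}^{h} \in \R$, and I can fix a minimizing sequence $(\varphi_{k}, \bm{u}_{k}, p_{k})_{k}$ with $\varphi_{k} \in \Phi_{ad}$ and $(\bm{u}_{k}, p_{k}) \in \bm{S}_{\eps}(\varphi_{k})$.

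Next I would derive uniform a priori bounds. The estimate \eqref{e:AprioriSolOpE} of Lemma \ref{l:StateEquSolvable} bounds $\norm{\bm{u}_{k}}_{\bm{H}^{1}(\Omega)} + \norm{p_{k}}_{L^{2}(\Omega)}$ independently of $k$. Since $J_{\eps}^{h}(\varphi_{k}, \bm{u}_{k}, p_{k}) \to m$ is bounded, the kinetic term is bounded (using $\alpha_{\eps} \in L^{\infty}$ and the $\bm{H}^{1}$ bound on $\bm{u}_{k}$) and $\mathcal{H}$ is bounded below, so the Ginzburg--Landau energy $\mathcal{E}_{\eps}(\varphi_{k})$ is bounded from above. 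This yields a uniform bound on $\norm{\nabla \varphi_{k}}_{\bm{L}^{2}(\Omega)}$ and on $\int_{\Omega} \psi(\varphi_{k})\dx$; together with the mean-value constraint $\int_{\Omega}\varphi_{k}\dx = \beta\abs{\Omega}$ and the Poincar\'{e}--Wirtinger inequality, I obtain a uniform bound on $\norm{\varphi_{k}}_{H^{1}(\Omega)}$. Passing to a subsequence (not relabelled), I extract $\varphi_{k} \rightharpoonup \varphi$ in $H^{1}(\Omega)$, hence $\varphi_{k} \to \varphi$ strongly in $L^{2}(\Omega)$ and $L^{1}(\Omega)$ and a.e.\ in $\Omega$ by compact embedding; the mean constraint passes to the limit, so $\varphi \in \Phi_{ad}$. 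The crucial step is then the closedness of the state constraint: since $\varphi_{k} \to \varphi$ in $L^{1}(\Omega)$ and $(\bm{u}_{k}, p_{k}) \in \bm{S}_{\eps}(\varphi_{k})$, Lemma \ref{l:SolOpCont} yields a further subsequence along which $(\bm{u}_{k}, p_{k})$ converges \emph{strongly} in $\bm{H}^{1}(\Omega) \times L^{2}(\Omega)$ to a limit $(\bm{u}, p) \in \bm{S}_{\eps}(\varphi)$, which by uniqueness of limits coincides with the weak limits; thus $(\varphi, \bm{u}, p)$ is admissible. To conclude $J_{\eps}^{h}(\varphi, \bm{u}, p) \leq m$, the kinetic term converges by Lemma \ref{l:ConvPropertiesAlphaEpsTerm}, the gradient part of $\mathcal{E}_{\eps}$ is weakly lower semicontinuous under $\varphi_{k} \rightharpoonup \varphi$, the potential satisfies $\int_{\Omega}\psi(\varphi)\dx \leq \liminf_{k}\int_{\Omega}\psi(\varphi_{k})\dx$ by Fatou, and the strong convergence of $(\bm{u}_{k}, p_{k})$ with weak convergence of $\varphi_{k}$ allows me to apply Assumption \ref{assump:generalh}(ii) to get $\mathcal{H}(\bm{u}, p, \varphi) \leq \liminf_{k}\mathcal{H}(\bm{u}_{k}, p_{k}, \varphi_{k})$. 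Summing gives the claim.

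I would expect the main obstacle to be the nonlinear convective term in the state equations, which prevents the constraint set from being weakly closed by elementary means; this difficulty, however, has already been resolved in Lemma \ref{l:SolOpCont}, whose conclusion of \emph{strong} $\bm{H}^{1} \times L^{2}$ convergence is precisely what makes the lower semicontinuity of $\mathcal{H}$ via Assumption \ref{assump:generalh}(ii) and the convergence of the permeability term via Lemma \ref{l:ConvPropertiesAlphaEpsTerm} applicable. The remaining work is routine bookkeeping to verify that the extracted limits are mutually compatible and that the mean-value constraint is preserved.
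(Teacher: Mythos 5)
Your proof is correct and follows essentially the same direct-method argument as the paper: boundedness from below via Assumption \ref{assump:generalh}(i), compactness of the minimizing sequence, strong convergence of the states via Lemma \ref{l:SolOpCont}, and term-by-term lower semicontinuity using Lemma \ref{l:ConvPropertiesAlphaEpsTerm} and Assumption \ref{assump:generalh}(ii). The only (harmless) deviations are that you obtain the $H^{1}$ bound on $\varphi_{k}$ from the Poincar\'e--Wirtinger inequality together with the mean-value constraint and treat the potential term with Fatou's lemma, whereas the paper first truncates the design variables to $[s_{a},s_{b}]$ to secure an $L^{\infty}$ bound and then applies dominated convergence; both routes work.
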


\begin{proof}
We may restrict ourselves to considering $\varphi \in \Phi_{ad}$ with $\varphi \in [s_{a}, s_{b}]$ a.e. in $\Omega$. In fact, we define as in \cite[Proof of Proposition 1]{article:Modica87} for arbitrary $\varphi \in \Phi_{ad}$ the truncated functions $\tilde{\varphi} := \max\{s_{a},\min\{\varphi,s_{b}\}\}$ and find $\mathcal{E}_{\eps}(\tilde{\varphi}) \leq \mathcal{E}_{\eps}(\varphi)$, where $\mathcal{E}_{\eps}$ is defined in (\ref{defn:GinzburgLandau}).  Moreover, by (\ref{alphaepsSaSb}), we have $\alpha_{\eps}(\varphi) = \alpha_{\eps}(\tilde{\varphi})$ and hence also $\bm{S}_{\eps}(\varphi) = \bm{S}_{\eps}(\tilde{\varphi})$.  Therefore we obtain 
\begin{align*}
J_{\eps}^{h}(\tilde{\varphi}, \bm{u}, p)\leq J_{\eps}^{h}(\varphi, \bm{u}, p) \text{ for all } (\bm{u}, p) \in \bm{S}_{\eps}(\varphi) = \bm{S}_{\eps}(\tilde{\varphi}).
\end{align*}

By Assumption \ref{assump:generalh}, $\mathcal{H} \mid_{\bm{H}^{1}_{\bm{g}, \sigma}(\Omega) \times L^{2}_{0}(\Omega) \times \Phi_{ad}}$ is bounded below by a constant $C_{0}$, and so $J_{\eps}^{h} : \Phi_{ad} \times \bm{H}^{1}_{\bm{g}, \sigma}(\Omega) \times L^{2}_{0}(\Omega)$ is bounded from below by a constant $C_{1}$.  Thus, we can choose  a minimizing sequence $(\varphi_{n} , \bm{u}_{n} , p_{n})_{n \in \N} \subset \Phi_{ad} \times \bm{H}^{1}_{\bm{g},\sigma}(\Omega) \times L^{2}_{0}(\Omega)$ with $(\bm{u}_{n}, p_{n}) \in \bm{S}_{\eps}(\varphi_{n})$ for all $n$ and
\begin{align*}
\lim_{n \to \infty } J_{\eps}^{h}(\varphi_{n}, \bm{u}_{n} , p_{n}) = \inf_{\varphi \in \Phi_{ad}, (\bm{u}, p) \in \bm{S}_{\eps}(\varphi)} J_{\eps}^{h}(\varphi, \bm{u}, p) > -\infty. 
\end{align*}
In particular, from the non-negativity of $\psi$ and $\alpha_{\eps}$, we see that for $\rho > 0$, there exists an $N$ such that $n > N$ implies
\begin{align*}
C_{0} + \frac{\gamma \eps}{2c_{0}}\norm{\nabla \varphi_{n}}_{\bm{L}^{2}(\Omega)} \leq J_{\eps}^{h}(\varphi_{n}, \bm{u}_{n}, p_{n}) \leq \inf_{\varphi \in \Phi_{ad}, (\bm{u}, p) \in \bm{S}_{\eps}(\varphi)} J_{\eps}^{h}(\varphi, \bm{u}, p) + \rho.
\end{align*}
Thus, $\{\nabla \varphi_{n}\}_{n \in \N}$ is bounded uniformly in $\bm{L}^{2}(\Omega)$.  Moreover, without loss of generality, we may assume that $\varphi_{n}(x) \in [s_{a}, s_{b}]$ for a.e. $x \in \Omega$ and every $n \in \N$.  And so, we deduce that $\{\varphi_{n}\}_{n \in \N}$ is bounded uniformly in $H^{1}(\Omega) \cap L^{\infty}(\Omega)$, and we may choose a subsequence $(\varphi_{n_{k}})_{k \in \N}$ that converges strongly in $L^{2}(\Omega)$ and pointwise almost everywhere in $\Omega$ to some limit element $\varphi \in \Phi_{ad}$.  

Using Lemma \ref{l:SolOpCont} we can deduce that there is a subsequence of $(\bm{u}_{n_{k}}, p_{n_{k}})_{k \in \N}$, denoted by the same index, such that 
\begin{align}\label{e:ExistMinProofStrongConvState}
\lim_{k \to \infty} \norm{\bm{u}_{n_{k}}- \bm{u}}_{\bm{H}^{1}(\Omega)} = 0, \quad\lim_{k \to \infty} \norm{p_{n_{k}} - p}_{L^{2}(\Omega)} = 0,
\end{align}
and $(\bm{u}, p) \in \bm{S}_{\eps}(\varphi)$.

From Lemma \ref{l:ConvPropertiesAlphaEpsTerm} we deduce additionally that
\begin{align}\label{e:ExistMinProofTerm2}
\lim_{k \to \infty} \int_{\Omega} \alpha_{\eps}(\varphi_{n_{k}}) \abs{\bm{u}_{n_{k}}}^{2} \dx = \int_{\Omega} \alpha_{\eps}(\varphi) \abs{\bm{u}}^{2}\dx.
\end{align}
As $\sup_{k \in \N} \norm{\psi(\varphi_{n_{k}})}_{L^{\infty}(\Omega)} < \infty$ we can use Lebesgue's dominated convergence theorem to deduce $\lim_{k \to\infty} \int_{\Omega} \psi (\varphi_{n_{k}} ) \dx = \int_{\Omega} \psi ( \varphi ) \dx$.  Finally, the weak lower semicontinuity of $H^{1}(\Omega) \ni \varphi \mapsto \int_{\Omega} \abs{\nabla \varphi}^{2}\dx$ yields
\begin{align}\label{e:ExistMinProofTerm3}
\int_{\Omega} \frac{\eps}{2} \abs{\nabla \varphi}^{2} +\frac{1}{\eps} \psi(\varphi) \dx \leq \liminf_{k \to \infty} \int_{\Omega} \frac{\eps}{2} \abs{\nabla \varphi_{n_{k}}}^{2} + \frac{1}{\eps}\psi(\varphi_{n_{k}}) \dx.
\end{align}

Together with the lower semicontinuity assumption on $\mathcal{H}$ from Assumption \ref{assump:generalh}, we deduce that
\begin{align*}
J_{\eps}^{h}(\varphi, \bm{u}, p) \leq \liminf_{k \to \infty} J_{\eps}^{h}(\varphi_{n_{k}}, \bm{u}_{n_{k}}, p_{n_{k}}) = \inf_{\varphi \in \Phi_{ad},(\bm{u}, p) \in \bm{S}_{\eps}(\varphi)} J_{\eps}^{h}(\varphi,\bm{u}, p),
\end{align*}
and so $(\varphi, \bm{u}, p)$ is a minimizer of \eqref{IntroObjFcltPhase}-\eqref{IntroStateEquPhaseWeak}.
\end{proof}

By the same arguments, one can show an analogous existence result for the optimal control problem $\{\eqref{IntroStateEquPhaseWeak}, \eqref{ObjFunctHydroPhase}\}$ involving the hydrodynamic force \eqref{HydroDynamForce}:

\begin{thm}\label{t:HydroDyanExistMin}
Under Assumptions \ref{assump:psi} and \ref{assump:alpha}, there exists at least one minimizer of the optimization problem  $\{\eqref{IntroStateEquPhaseWeak}, \eqref{ObjFunctHydroPhase}\}$ involving the hydrodynamic force \eqref{HydroDynamForce}.
\end{thm}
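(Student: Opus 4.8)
The plan is to re-run the proof of Theorem \ref{t:PhaseFieldExistMin}, but the hydrodynamic cost term cannot be treated as an instance of Assumption \ref{assump:generalh}. Writing $\bm{\sigma} := \mu(\nabla\bm{u} + (\nabla\bm{u})^{T}) - p\id$, the integrand $h = \nabla\varphi\cdot\bm{\sigma}\bm{a}$ is Carath\'{e}odory and does satisfy the quadratic growth bound (by Young's inequality, $|h| \le C(|\nabla\varphi|^{2} + |\nabla\bm{u}|^{2} + |p|^{2})$), but it is \emph{linear and sign-indefinite} in $(\nabla\bm{u}, p)$. Consequently the associated functional $\mathcal{H}$ is not bounded below on its own, so property (i) of Assumption \ref{assump:generalh} fails and Theorem \ref{t:PhaseFieldExistMin} does not apply verbatim. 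The key point is therefore to establish boundedness below and coercivity at the level of the \emph{full} functional $J_{\eps}$ in \eqref{ObjFunctHydroPhase}, exploiting that the a priori estimate \eqref{e:AprioriSolOpE} of Lemma \ref{l:StateEquSolvable} is \emph{independent of} $\varphi$.

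Concretely, as in the proof of Theorem \ref{t:PhaseFieldExistMin} I would restrict attention to design functions with $\varphi \in [s_{a}, s_{b}]$ a.e., via the truncation $\tilde{\varphi} := \max\{s_{a}, \min\{\varphi, s_{b}\}\}$ of \cite{article:Modica87}; by \eqref{alphaepsSaSb} this leaves $\bm{S}_{\eps}(\varphi)$ and the $\alpha_{\eps}$-term unchanged, does not increase the Ginzburg--Landau energy, and, crucially, keeps $\mathcal{M}(\varphi)$ bounded by a constant $M_{\eps} := \sup_{s\in[s_{a},s_{b}]}\mathcal{M}(s) < \infty$ for both choices in \eqref{defn:mathcalL}. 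For any $(\bm{u}, p) \in \bm{S}_{\eps}(\varphi)$ estimate \eqref{e:AprioriSolOpE} yields $\norm{\bm{\sigma}}_{\bm{L}^{2}(\Omega)} \le C$ with $C$ independent of $\varphi$, so the cost term is controlled by
\begin{align*}
\Bigl| \int_{\Omega} \mathcal{M}(\varphi)\, \nabla\varphi\cdot\bm{\sigma}\bm{a} \dx \Bigr| \le M_{\eps}\, |\bm{a}|\, \norm{\bm{\sigma}}_{\bm{L}^{2}(\Omega)}\, \norm{\nabla\varphi}_{\bm{L}^{2}(\Omega)} \le \tfrac{\eps}{8 c_{0}} \norm{\nabla\varphi}_{\bm{L}^{2}(\Omega)}^{2} + C',
\end{align*}
where the last step is Young's inequality, so that the linear growth in $\norm{\nabla\varphi}_{\bm{L}^{2}(\Omega)}$ is absorbed into the quadratic Ginzburg--Landau term $\tfrac{\eps}{4 c_{0}}\norm{\nabla\varphi}_{\bm{L}^{2}(\Omega)}^{2}$. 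Hence $J_{\eps}$ is bounded below and coercive in $\norm{\nabla\varphi}_{\bm{L}^{2}(\Omega)}$; together with the growth of $\psi$ from Assumption \ref{assump:psi} and the volume constraint this gives a minimizing sequence $(\varphi_{n}, \bm{u}_{n}, p_{n})$ with $\varphi_{n}$ bounded in $H^{1}(\Omega) \cap L^{\infty}(\Omega)$, and I extract a subsequence $\varphi_{n_{k}} \rightharpoonup \varphi$ in $H^{1}(\Omega)$, strongly in $L^{2}(\Omega)$ and a.e., with $\varphi \in \Phi_{ad}$.

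It remains to pass to the limit. By Lemma \ref{l:SolOpCont}, along a further subsequence $\bm{u}_{n_{k}} \to \bm{u}$ in $\bm{H}^{1}(\Omega)$ and $p_{n_{k}} \to p$ in $L^{2}(\Omega)$ with $(\bm{u}, p) \in \bm{S}_{\eps}(\varphi)$; the $\alpha_{\eps}$-term converges by Lemma \ref{l:ConvPropertiesAlphaEpsTerm}, and the Ginzburg--Landau term is weakly lower semicontinuous. For the cost term I would show that it actually \emph{converges}: since $\nabla\varphi_{n_{k}} \rightharpoonup \nabla\varphi$ weakly in $\bm{L}^{2}(\Omega)$ while the factor $\mathcal{M}(\varphi_{n_{k}})\bm{\sigma}_{n_{k}}\bm{a}$ converges \emph{strongly} in $\bm{L}^{2}(\Omega)$ --- using the strong convergence of $\bm{u}_{n_{k}}, p_{n_{k}}$ (hence of $\bm{\sigma}_{n_{k}}$), the a.e.\ convergence and uniform bound $M_{\eps}$ of $\mathcal{M}(\varphi_{n_{k}})$, and dominated convergence --- the pairing of a weakly and a strongly convergent sequence converges. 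Thus $J_{\eps}(\varphi, \bm{u}, p) \le \liminf_{k} J_{\eps}(\varphi_{n_{k}}, \bm{u}_{n_{k}}, p_{n_{k}})$, which equals the infimum, so $(\varphi, \bm{u}, p)$ is a minimizer.

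The main obstacle is exactly the sign-indefinite, linear dependence of the hydrodynamic objective on $(\nabla\bm{u}, p)$: it destroys the pointwise lower bound that Theorem \ref{t:PhaseFieldExistMin} relied on, so neither boundedness below nor the direct-method compactness are automatic. What rescues the argument is the $\varphi$-independent a priori bound \eqref{e:AprioriSolOpE}, which furnishes uniform control of $\bm{\sigma}$; this is what lets the cost term be dominated by the coercive perimeter term and, combined with the strong state convergence from Lemma \ref{l:SolOpCont}, makes the cost term continuous rather than merely lower semicontinuous along the minimizing sequence.
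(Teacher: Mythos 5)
Your proof is correct, and its overall architecture --- truncation to $[s_{a}, s_{b}]$, boundedness below of $J_{\eps}$ by absorbing the cost term into the Ginzburg--Landau term via the $\varphi$-independent a priori bound \eqref{e:AprioriSolOpE} and Young's inequality, then the direct method with Lemma \ref{l:SolOpCont} --- coincides with the paper's. The one step you handle differently is the passage to the limit in the cost term. The paper proves the weak $\bm{L}^{2}(\Omega)$ convergence $\sqrt{\psi(\varphi_{n_{k}}) + \delta_{\eps}}\,\nabla \varphi_{n_{k}} \rightharpoonup \sqrt{\psi(\varphi) + \delta_{\eps}}\,\nabla \varphi$ by a Modica-type device: it introduces the primitive $\phi(t) = \int_{s_{a}}^{t}\sqrt{\psi(s)+\delta_{\eps}}\,\mathrm{d}s$, shows that $w_{n_{k}} = \phi(\varphi_{n_{k}})$ is bounded in $H^{1}(\Omega)$, identifies the weak limit of $\nabla w_{n_{k}}$, and then pairs this weakly convergent factor against the strongly convergent $\bm{\sigma}_{n_{k}}\bm{a}$. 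You instead keep $\nabla\varphi_{n_{k}} \rightharpoonup \nabla\varphi$ as the weak factor and move $\mathcal{M}(\varphi_{n_{k}})$ onto the strong side, showing $\mathcal{M}(\varphi_{n_{k}})\bm{\sigma}_{n_{k}}\bm{a} \to \mathcal{M}(\varphi)\bm{\sigma}\bm{a}$ in $\bm{L}^{2}(\Omega)$ via the uniform bound $\mathcal{M}(\varphi_{n_{k}}) \leq M_{\eps}$, a.e.\ convergence, and dominated convergence --- the same splitting the paper itself uses for the $\alpha_{\eps}$-term in Lemma \ref{l:ConvPropertiesAlphaEpsTerm}. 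Both arguments are valid and rest on the same ingredients (the $L^{\infty}$ bound on the truncated $\varphi_{n_{k}}$ and the strong state convergence); yours is slightly more elementary since it avoids constructing the primitive, while the paper's version has the side benefit of producing \eqref{e:ExistMinProofWeakConvTerms}, the natural object in the Modica--Mortola framework. (Minor slip: the absorbed term should carry the factor $\gamma$, i.e.\ $\tfrac{\gamma\eps}{8c_{0}}\norm{\nabla\varphi}_{\bm{L}^{2}(\Omega)}^{2}$ as in \eqref{JepsLowerbdd}; this affects nothing.)
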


\begin{proof}
We will prove the assertion for the choice $\mathcal{M}(\varphi) = \sqrt{\tfrac{\psi(\varphi) + \delta_{\eps}}{2}}$, and the analogous assertion for the choice $\mathcal{M}(\varphi) = \frac{1}{2}$ follows along the same lines.  

We first show that $\{ J_{\eps}(\varphi, \bm{u}, p) \mid \varphi \in \Phi_{ad}, (\bm{u}, p) \in \bm{S}_{\eps}(\varphi)\}$ is bounded from below.  We may restrict ourselves to considering $\varphi \in \Phi_{ad}$ with $\varphi \in [s_{a}, s_{b}]$ a.e. in $\Omega$ as in the proof of Theorem \ref{t:PhaseFieldExistMin}. 

Now let $\varphi \in \Phi_{ad}$ be arbitrarily chosen with $\varphi \in [s_{a},s_{b}]$ for a.e. $x \in \Omega$ and choose $(\bm{u}, p) \in \bm{S}_{\eps}(\varphi)$.  From \eqref{e:AprioriSolOpE}, we find a constant $C_{2} > 0$ independent of $\varphi$ such that 
\begin{align*}
\norm{\bm{u}}_{\bm{H}^{1}(\Omega)} + \norm{p}_{L^{2}(\Omega)} < C_{2}.
\end{align*}
By construction, we have
\begin{align*}
\varphi \in [s_{a}, s_{b}] \Longrightarrow \norm{\psi(\varphi)}_{L^{\infty}(\Omega)} < C_{3},
\end{align*} 
for some constant $C_{3} > 0$ independent of $\varphi$.  Then, using Cauchy--Schwarz's inequality, and Young's inequality we have
\begin{align*}
& \;  \frac{1}{c_{0}} \int_{\Omega} \sqrt{\tfrac{\psi(\varphi)+ \delta_{\eps}}{2}} \nabla \varphi \cdot \left(\mu \left ( \nabla \bm{u} + (\nabla \bm{u})^{T} \right ) - p \id \right) \bm{a} \dx  \\
\geq & \; -\frac{1}{c_{0} \sqrt{2}} \norm{\nabla\varphi \sqrt{\psi(\varphi)+ \delta_{\eps}} }_{L^{2}(\Omega)} \norm{\mu \left( \nabla \bm{u} + (\nabla \bm{u})^{T} \right )\bm{a} - p \bm{a} }_{\bm{L}^{2}(\Omega)} \\
\geq & \; - \frac{1}{c_{0}} \sqrt{\tfrac{C_{3} + \delta_{\eps}}{2}} \norm{ \nabla \varphi }_{L^{2}(\Omega)} \left( 2\mu C_{2} + C_{2} \right) \geq -  \frac{\gamma \eps}{8 c_{0}}\norm{\nabla \varphi}_{L^{2}(\Omega)}^{2} - C_{4},
\end{align*}
with some constant $C_{4} > 0$ independent of $\varphi$.  The non-negativity of $\alpha_{\eps}$ and $\psi$ yield that
\begin{equation}
\label{JepsLowerbdd}
\begin{aligned}
J_{\eps}(\varphi, \bm{u} , p) & \geq \int_{\Omega} \frac{1}{c_{0}}  \sqrt{\tfrac{\psi(\varphi) + \delta_{\eps}}{2}} \nabla \varphi \cdot \left(\mu\left(\nabla \bm{u} + (\nabla \bm{u})^{T} \right) - p\id \right) \bm{a} + \frac{\gamma}{2c_{0}} \frac{\eps}{2} \abs{\nabla \varphi}^{2} \dx \\
& \geq - \frac{\gamma \eps}{8 c_{0}} \norm{\nabla \varphi}_{L^{2}(\Omega)}^{2} - C_{4} +  \frac{\gamma \eps}{4c_{0}} \norm{\nabla \varphi}_{L^{2}(\Omega)}^{2}  = \frac{\gamma \eps}{8c_{0}} \norm{\nabla \varphi}_{L^{2}(\Omega)}^{2} - C_{4} \geq -C_{4}.
\end{aligned}
\end{equation}
This shows that $\{J_{\eps}(\varphi,\bm{u} , p) \mid \varphi \in \Phi_{ad}, (\bm{u} , p) \in \bm{S}_{\eps}(\varphi)\}$ is bounded from below.  Hence we may choose a minimizing sequence $(\varphi_{n} , \bm{u}_{n} , p_{n})_{n \in \N} \subset \Phi_{ad} \times \bm{H}^{1}_{\bm{g},\sigma}(\Omega) \times L^{2}_{0}(\Omega)$ with 
\begin{align*}
\lim_{n \to \infty } J_{\eps}(\varphi_{n}, \bm{u}_{n} , p_{n}) = \inf_{\varphi \in \Phi_{ad}, (\bm{u}, p) \in \bm{S}_{\eps}(\varphi)} J_{\eps}(\varphi, \bm{u}, p)  > -\infty. 
\end{align*}

As before, we deduce that $\{\varphi_{n}\}_{n \in \N}$ is bounded uniformly in $H^{1}(\Omega) \cap L^{\infty}(\Omega)$, together with Lemma \ref{l:SolOpCont}, we have subsequences $(\varphi_{n_{k}}, \bm{u}_{n_{k}}, p_{n_{k}})_{k \in \N}$, that satisfy
\begin{align*}
\lim_{k \to \infty} \norm{\varphi_{n_{k}} - \varphi}_{L^{2}(\Omega)} = 0, \quad \lim_{k \to \infty} \norm{\bm{u}_{n_{k}}- \bm{u}}_{\bm{H}^{1}(\Omega)} = 0, \quad \lim_{k \to \infty} \norm{p_{n_{k}} - p}_{L^{2}(\Omega)} = 0,
\end{align*}
and $(\bm{u}, p) \in \bm{S}_{\eps}(\varphi)$.

To deduce that $(\varphi, \bm{u}, p)$ is a minimizer of $\{\eqref{IntroStateEquPhaseWeak}, \eqref{ObjFunctHydroPhase}\}$, we only need to show that 
\begin{equation}\label{HydroDynamTermWLSC}\begin{aligned}
& \; \liminf_{k \to \infty} \int_{\Omega} \sqrt{\psi(\varphi_{n_{k}}) + \delta_{\eps}} \nabla \varphi_{n_{k}} \cdot \left( \mu \left( \nabla \bm{u}_{n_{k}} + (\nabla \bm{u}_{n_{k}})^{T} \right ) - p_{n_{k}} \id \right ) \bm{a} \dx \\
\geq & \; \int_{\Omega} \sqrt{\psi(\varphi) + \delta_{\eps}} \nabla \varphi \cdot \left( \mu \left( \nabla \bm{u} +(\nabla \bm{u})^T \right) - p \id \right) \bm{a} \dx,
\end{aligned}
\end{equation}
as the other integrals in \eqref{ObjFunctHydroPhase} are shown to be weakly lower semicontinuous in the proof of Theorem \ref{t:PhaseFieldExistMin}.  We apply now an idea of \cite{article:Modica87} and define
\begin{align*}
\phi(t) := \int_{s_{a}}^{t} \sqrt{\psi(s)+\delta_{\eps}} \, \mathrm ds, \quad w_{n_{k}}(x) := \phi(\varphi_{n_{k}}(x)).
\end{align*}
Then we see that 
\begin{align*}
\der w_{n_{k}}(x) = \phi'(\varphi_{n_{k}}(x)) \der \varphi_{n_{k}}(x) = (\sqrt{\psi(\varphi_{n_{k}}(x)) + \delta_{\eps}}) \der \varphi_{n_{k}}(x).
\end{align*}
By the uniform boundedness of $(\varphi_{n_{k}})_{k \in \N}$ in $H^{1}(\Omega) \cap L^{\infty}(\Omega)$, we find that $(\psi(\varphi_{n_{k}}))_{k \in \N}$ is uniformly bounded in $L^{\infty}(\Omega)$, and so by the Cauchy--Schwarz inequality,
\begin{align*}
\norm{w_{n_{k}}}_{L^{2}(\Omega)}^{2} & \leq \int_{\Omega} (\varphi_{n_{k}} - s_{a})  \left ( \int_{s_{a}}^{\varphi_{n_{k}}} (\psi(s) + \delta_{\eps}) \ds \right ) \dx \\
& \leq \sup_{s \in [s_{a}, s_{b}]} (\psi(s) + \delta_{\eps}) \int_{\Omega} \abs{\varphi_{n_{k}} - s_{a}}^{2} \dx, \\
\norm{\der w_{n_{k}}}_{L^{2}(\Omega)}^{2} & \leq \sup_{k \in \N} \, (\psi(\varphi_{n_{k}}) + \delta_{\eps}) \norm{\der \varphi_{n_{k}}}_{L^{2}(\Omega)}^{2}.
\end{align*}
Thus, we deduce that $(w_{n_{k}})_{k \in \N}$ is bounded uniformly in $H^{1}(\Omega)$, and hence there is a subsequence, denoted by the same index, that converges weakly in $H^{1}(\Omega)$ and pointwise almost everywhere in $\Omega$ to some limit element $w \in H^{1}(\Omega)$.  Since $\phi$ is continuous and $\lim_{k \to \infty} \varphi_{n_k}(x) = \varphi(x)$ for almost every $x \in \Omega$, we know that $w = \phi(\varphi)$. In particular, the weak convergence of $\der w_{n_{k}}$ to $\der w$ implies that 
\begin{align}
\label{e:ExistMinProofWeakConvTerms}
\sqrt{\psi(\varphi_{n_{k}}) + \delta_{\eps}} \nabla \varphi_{n_{k}} \rightharpoonup \sqrt{\psi(\varphi) + \delta_{\eps}} \nabla \varphi \quad\text{ in } \bm{L}^{2}(\Omega).
\end{align}

Combining \eqref{e:ExistMinProofStrongConvState} and \eqref{e:ExistMinProofWeakConvTerms} we obtain from the product of weak-strong convergence:
\begin{equation}\label{e:ExistMinProofTerm1}\begin{aligned}
& \; \lim_{k \to \infty} \int_{\Omega} \sqrt{\psi(\varphi_{n_{k}}) + \delta_{\eps}} \nabla\varphi_{n_{k}} \cdot \left( \mu \left( \nabla \bm{u}_{n_{k}} + (\nabla \bm{u}_{n_{k}})^{T} \right) - p_{n_{k}} \id \right) \bm{a} \dx \\
= & \; \int_{\Omega} \sqrt{\psi(\varphi) + \delta_{\eps}} \nabla \varphi \cdot \left( \mu \left( \nabla \bm{u} +(\nabla \bm{u})^T \right) - p \id \right) \bm{a} \dx.
\end{aligned}
\end{equation}

Using \eqref{e:ExistMinProofTerm1}, \eqref{e:ExistMinProofTerm2} and \eqref{e:ExistMinProofTerm3}, we deduce that
\begin{align*}
J_{\eps}(\varphi, \bm{u}, p) \leq \liminf_{k \to \infty} J_{\eps}(\varphi_{n_{k}}, \bm{u}_{n_{k}}, p_{n_{k}}) = \inf_{\varphi \in \Phi_{ad},(\bm{u}, p) \in \bm{S}_{\eps}(\varphi)} J_{\eps}(\varphi,\bm{u}, p),
\end{align*}
and so $(\varphi, \bm{u}, p)$ is a minimizer of $\{\eqref{IntroStateEquPhaseWeak}, \eqref{ObjFunctHydroPhase}\}$.
\end{proof}

\begin{rem}
Note that, for the choice $\mathcal{M}(\varphi) = \frac{1}{2}$, the proof of Theorem \ref{t:HydroDyanExistMin} is completed once we showed that $J_{\eps}$ is bounded from below, which can be shown similarly as in \eqref{JepsLowerbdd}, and (ii) in Assumption \ref{assump:regularityh} has been verified.  This follows the product of weak-strong convergence:
\begin{equation}
\begin{aligned}
& \; \lim_{k \to \infty} \int_{\Omega} \nabla\varphi_{n_{k}} \cdot \left( \mu \left( \nabla \bm{u}_{n_{k}} + (\nabla \bm{u}_{n_{k}})^{T} \right) - p_{n_{k}} \id \right) \bm{a} \dx \\
= & \; \int_{\Omega} \nabla \varphi \cdot \left( \mu \left( \nabla \bm{u} +(\nabla \bm{u})^T \right) - p \id \right) \bm{a} \dx.
\end{aligned}
\end{equation}
\end{rem}

\subsection{Optimality conditions}
This section is devoted to the derivation of a first order necessary optimality system for the optimal control problem \eqref{IntroObjFcltPhase}-\eqref{IntroStateEquPhaseWeak}.  For this purpose, we first show Fr\'echet differentiability of the solution operator.  We will only be able to show differentiability at certain points where the solution to the state equations is unique.  Otherwise we cannot apply the implicit function theorem in order to deduce the statement.  To be precise, we obtain the following result:

\begin{lem}\label{l:SolOpDiffable}
Under Assumption \ref{assump:alpha}, let $\varphi_{\eps} \in H^{1}(\Omega)\cap L^{\infty}(\Omega)$ be given such that there is $(\bm{u}_{\eps} , p_{\eps}) \in \bm{S}_{\eps}(\varphi_{\eps})$ with $\norm{\nabla \bm{u}_{\eps}}_{\bm{L}^{2}(\Omega)} <\frac{\mu}{K_{\Omega}}$.  Then there is a neighborhood $N$ of $\varphi_{\eps}$ in $H^{1}(\Omega) \cap L^{\infty}(\Omega)$ such that for every $\varphi \in N$ the solution operator consists of exactly one pair, and hence we may write $\bm{S}_{\eps} : N \subset H^{1}(\Omega) \cap L^{\infty}(\Omega) \to \bm{H}^{1}(\Omega) \times L^{2}(\Omega)$. This mapping is then differentiable at $\varphi_{\eps}$ with $\der \bm{S}_{\eps}(\varphi_{\eps})(\varphi) =: (\bm{u}, p) \in \bm{H}^{1}_{0}(\Omega) \times L^{2}_{0}(\Omega)$ being the unique solution of the linearized state system
\begin{subequations}\label{e:PhaseLinearizedState}
\begin{alignat}{2}
\alpha'_{\eps}(\varphi_{\eps}) \varphi \bm{u}_{\eps} + \alpha_{\eps}(\varphi_{\eps}) \bm{u} - \mu \Laplace \bm{u} + (\bm{u} \cdot \nabla )\bm{u}_{\eps} + (\bm{u}_{\eps} \cdot \nabla)\bm{u} + \nabla p  & = \bm{0} && \text{ in } \Omega, \\
\div \bm{u} & = 0 && \text{ in }  \Omega, \\
\bm{u} & = \bm{0} && \text{ on }  \pd \Omega.
\end{alignat}
\end{subequations}
\end{lem}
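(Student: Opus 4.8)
The plan is to realise $\bm{S}_{\eps}$ as an implicitly defined function and to invoke the implicit function theorem in Banach spaces; the passage preceding the statement already flags this as the intended route. Writing $X := H^{1}(\Omega) \cap L^{\infty}(\Omega)$ for the space of design functions and regarding $\bm{H}^{1}_{\bm{g},\sigma}(\Omega)$ as an affine subspace of $\bm{H}^{1}(\Omega)$ with tangent space $\bm{H}^{1}_{0,\sigma}(\Omega)$, I would encode the weak state equation \eqref{IntroStateEquPhaseWeak} as the vanishing of a residual map
\begin{align*}
\bm{F} &: X \times \bm{H}^{1}_{\bm{g},\sigma}(\Omega) \times L^{2}_{0}(\Omega) \to \bm{H}^{-1}(\Omega), \\
\inner{\bm{F}(\varphi, \bm{u}, p)}{\bm{v}} &:= \int_{\Omega} \alpha_{\eps}(\varphi) \bm{u} \cdot \bm{v} + \mu \nabla \bm{u} \cdot \nabla \bm{v} + (\bm{u} \cdot \nabla) \bm{u} \cdot \bm{v} - p \div \bm{v} - \bm{f} \cdot \bm{v} \dx,
\end{align*}
for $\bm{v} \in \bm{H}^{1}_{0}(\Omega)$. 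By construction $\bm{F}(\varphi, \bm{u}, p) = \bm{0}$ holds exactly when $(\bm{u}, p) \in \bm{S}_{\eps}(\varphi)$, and by hypothesis $\bm{F}(\varphi_{\eps}, \bm{u}_{\eps}, p_{\eps}) = \bm{0}$.

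The first task is to check that $\bm{F}$ is continuously Fr\'echet differentiable. The terms $\mu \int_{\Omega} \nabla \bm{u} \cdot \nabla \bm{v} \dx$ and $-\int_{\Omega} p \div \bm{v} \dx$ are bounded and linear, while $\bm{u} \mapsto b(\bm{u}, \bm{u}, \cdot)$ is a bounded quadratic map, hence smooth by Lemma \ref{l:PropertiesTrilinearForm}. The only delicate contribution is the Nemytskii term: since $\alpha_{\eps} \in C^{1,1}(\R)$ by Assumption \ref{assump:alpha}, the superposition operator $\varphi \mapsto \alpha_{\eps}(\varphi)$ is continuously differentiable from $L^{\infty}(\Omega)$ into $L^{\infty}(\Omega)$, with derivative $\tilde{\varphi} \mapsto \alpha_{\eps}'(\varphi) \tilde{\varphi}$ (the Lipschitz continuity of $\alpha_{\eps}'$ furnishing continuity of this derivative). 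Composing with the bounded bilinear pairing $(\vartheta, \bm{u}) \mapsto [\bm{v} \mapsto \int_{\Omega} \vartheta\, \bm{u} \cdot \bm{v} \dx]$ shows that $(\varphi, \bm{u}) \mapsto [\bm{v} \mapsto \int_{\Omega} \alpha_{\eps}(\varphi) \bm{u} \cdot \bm{v} \dx]$ is $C^{1}$, and hence so is $\bm{F}$.

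The crux, and the step I expect to be the main obstacle, is to show that the partial derivative $\bm{L} := \der_{(\bm{u},p)} \bm{F}(\varphi_{\eps}, \bm{u}_{\eps}, p_{\eps})$ is an isomorphism from $\bm{H}^{1}_{0,\sigma}(\Omega) \times L^{2}_{0}(\Omega)$ onto $\bm{H}^{-1}(\Omega)$; this is precisely where the smallness hypothesis $\norm{\nabla \bm{u}_{\eps}}_{\bm{L}^{2}(\Omega)} < \mu / K_{\Omega}$ enters. Explicitly, $\bm{L}(\bm{h}, q)$ acts as
\begin{align*}
\bm{v} \mapsto \int_{\Omega} \alpha_{\eps}(\varphi_{\eps}) \bm{h} \cdot \bm{v} + \mu \nabla \bm{h} \cdot \nabla \bm{v} \dx + b(\bm{h}, \bm{u}_{\eps}, \bm{v}) + b(\bm{u}_{\eps}, \bm{h}, \bm{v}) - \int_{\Omega} q \div \bm{v} \dx.
\end{align*}
Restricting the momentum part to divergence-free test functions yields a bilinear form $a$ on $\bm{H}^{1}_{0,\sigma}(\Omega)$; using $b(\bm{u}_{\eps}, \bm{h}, \bm{h}) = 0$ from \eqref{e:TrilinearformLastTwoEqualZero} and the estimate \eqref{e:ContinuityEstimateTrilinearForm} applied to $b(\bm{h}, \bm{u}_{\eps}, \bm{h})$, I obtain
\begin{align*}
a(\bm{h}, \bm{h}) \geq \int_{\Omega} \alpha_{\eps}(\varphi_{\eps}) \abs{\bm{h}}^{2} \dx + (\mu - K_{\Omega} \norm{\nabla \bm{u}_{\eps}}_{\bm{L}^{2}(\Omega)}) \norm{\nabla \bm{h}}_{\bm{L}^{2}(\Omega)}^{2},
\end{align*}
which is coercive on $\bm{H}^{1}_{0,\sigma}(\Omega)$ by the smallness assumption. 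Lax--Milgram then produces a unique $\bm{h}$, and the pressure $q \in L^{2}_{0}(\Omega)$ is recovered from the inf--sup condition for the divergence in the form of \cite[Lemma II.2.1.1]{book:Sohr}; injectivity and surjectivity of $\bm{L}$ follow, with the bounded inverse supplied by the open mapping theorem.

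With $\bm{F} \in C^{1}$, $\bm{F}(\varphi_{\eps}, \bm{u}_{\eps}, p_{\eps}) = \bm{0}$, and $\bm{L}$ an isomorphism, the implicit function theorem provides a neighbourhood $N$ of $\varphi_{\eps}$ in $X$ and a $C^{1}$ map $\varphi \mapsto (\bm{u}(\varphi), p(\varphi))$ with $\bm{F}(\varphi, \bm{u}(\varphi), p(\varphi)) = \bm{0}$ that exhausts all solutions near $(\bm{u}_{\eps}, p_{\eps})$. Since $\norm{\nabla \bm{u}_{\eps}}_{\bm{L}^{2}(\Omega)} < \mu / K_{\Omega}$ strictly and $\bm{u}(\varphi) \to \bm{u}_{\eps}$ in $\bm{H}^{1}(\Omega)$ as $\varphi \to \varphi_{\eps}$, after shrinking $N$ I may assume $\norm{\nabla \bm{u}(\varphi)}_{\bm{L}^{2}(\Omega)} < \mu / K_{\Omega}$ throughout $N$; Remark \ref{rem:uniquenessSolnOperator} then upgrades local to global uniqueness, so $\bm{S}_{\eps}(\varphi) = \{(\bm{u}(\varphi), p(\varphi))\}$ is single-valued on $N$ and coincides with the implicit map. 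Finally, differentiating $\bm{F}(\varphi, \bm{S}_{\eps}(\varphi)) = \bm{0}$ at $\varphi_{\eps}$ gives, in a direction $\varphi$, the identity $\der \bm{S}_{\eps}(\varphi_{\eps}) \varphi = -\bm{L}^{-1}(\der_{\varphi} \bm{F}(\varphi_{\eps}, \bm{u}_{\eps}, p_{\eps}) \varphi)$; as $\der_{\varphi} \bm{F}(\varphi_{\eps}, \bm{u}_{\eps}, p_{\eps}) \varphi$ is the functional $\bm{v} \mapsto \int_{\Omega} \alpha_{\eps}'(\varphi_{\eps}) \varphi\, \bm{u}_{\eps} \cdot \bm{v} \dx$, the pair $(\bm{u}, p) := \der \bm{S}_{\eps}(\varphi_{\eps}) \varphi \in \bm{H}^{1}_{0}(\Omega) \times L^{2}_{0}(\Omega)$ is characterised as the unique solution of the weak form of the linearized system \eqref{e:PhaseLinearizedState}, which completes the proof.
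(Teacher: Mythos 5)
Your proof is correct and follows essentially the same route as the paper: both realise $\bm{S}_{\eps}$ locally via the implicit function theorem applied to the residual of the weak state equation \eqref{IntroStateEquPhaseWeak}, with the smallness hypothesis $\norm{\nabla \bm{u}_{\eps}}_{\bm{L}^{2}(\Omega)} < \mu/K_{\Omega}$ yielding coercivity of the linearised operator via \eqref{e:ContinuityEstimateTrilinearForm} and \eqref{e:TrilinearformLastTwoEqualZero}, followed by Lax--Milgram and the pressure recovery of \cite[Lemma II.2.1.1]{book:Sohr}. The only differences are cosmetic or additive: you build the divergence constraint into the solenoidal affine space $\bm{H}^{1}_{\bm{g},\sigma}(\Omega)$ and recover the pressure afterwards, whereas the paper subtracts a divergence-free lift $\bm{G}$ of $\bm{g}$, works on $\bm{H}^{1}_{0}(\Omega)\times L^{2}_{0}(\Omega)$, and carries $\div \bm{u}=0$ as a second component of the residual (and treats $\alpha_{\eps}$ as a Nemytskii operator from $L^{6}(\Omega)$ to $L^{\frac{3}{2}}(\Omega)$ rather than on $L^{\infty}(\Omega)$); your explicit appeal to Remark \ref{rem:uniquenessSolnOperator} to upgrade the merely local uniqueness furnished by the implicit function theorem to single-valuedness of $\bm{S}_{\eps}(\varphi)$ is a step the paper leaves implicit.
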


\begin{proof}
As already mentioned, we want to apply the implicit function theorem to get the statements of the lemma.  For this purpose, we first note that, by \cite[Lemma IX.4.2]{book:Galdi}, there exists a $\bm{G} \in \bm{H}^{1}_{\bm{g},\sigma}(\Omega)$, i.e., $\bm{G}$ satisfies
\begin{align*}
\div \bm{G} = 0 \text{ in } \Omega, \quad \bm{G} \mid_{\pd \Omega} = \bm{g}.
\end{align*}

We define
\begin{align*}
F : (H^{1}(\Omega) \cap L^{\infty}(\Omega)) \times \bm{H}^{1}_{0}(\Omega) \times L^{2}_{0}(\Omega) \to \bm{H}^{-1}(\Omega) \times L^{2}_{0}(\Omega), \quad F = (F_{1}, F_{2}), 
\end{align*}
by
\begin{align*}
F_{1} (\varphi, \bm{u}, p ) \bm{v} &:= \int_{\Omega} \alpha_{\eps}(\varphi) \bm{u} \cdot \bm{v} + \mu \nabla \bm{u} \cdot \nabla \bm{v} + (\bm{u} \cdot \nabla )\bm{u} \cdot \bm{v} - p \div \bm{v} - \bm{f} \cdot \bm{v} \dx \\
& + \int_{\Omega} (\bm{u} \cdot \nabla) \bm{G} \cdot \bm{v} + (\bm{G} \cdot \nabla)\bm{u} \cdot \bm{v} + \alpha_{\eps}(\varphi) \bm{G} \cdot \bm{v} + \mu \nabla \bm{G} \cdot \nabla \bm{v} + (\bm{G} \cdot \nabla) \bm{G} \cdot \bm{v} \dx, \\
F_{2}(\varphi, \bm{u}, p) &:= \div \bm{u},
\end{align*}
for all $\bm{v} \in \bm{H}^{1}_{0}(\Omega)$.

Hence, $F(\varphi,\bm{u} - \bm{G}, p)=0$ if and only if $(\bm{u}, p) \in \bm{S}_{\eps}(\varphi)$. Thus in particular we have $F(\varphi_{\eps}, \bm{u}_{\eps} - \bm{G}, p_{\eps})=0$.  Besides, we directly see that the Fr\'{e}chet differential $\der_{(\bm{u}, p)}F$ exists and is given at $(\varphi_{\eps}, \bm{u}_{\eps} - \bm{G}, p_{\eps})$ as
\begin{align*}
\der_{(\bm{u}, p)} F_{1} (\varphi_{\eps}, \bm{u}_{\eps} - \bm{G}, p_{\eps}) (\bm{u}, p) \bm{v} &= \int_{\Omega} \alpha_{\eps}(\varphi_{\eps}) \bm{u} \cdot \bm{v} + \mu\nabla \bm{u} \cdot \nabla \bm{v} + (\bm{u} \cdot \nabla )\bm{u}_{\eps} \cdot \bm{v} \dx \\
& + \int_{\Omega} (\bm{u}_{\eps} \cdot \nabla ) \bm{u} \cdot \bm{v} - p \div \bm{v} \dx,\\
\der_{(\bm{u}, p)} F_{2} (\varphi_{\eps}, \bm{u}_{\eps} - \bm{G}, p_{\eps})(\bm{u}, p) & = \div \bm{u}.
\end{align*}

The assumption $\norm{\nabla \bm{u}_{\eps}}_{\bm{L}^{2}(\Omega)} < \frac{\mu}{K_{\Omega}}$, equations \eqref{e:ContinuityEstimateTrilinearForm} and \eqref{e:TrilinearformLastTwoEqualZero} ensure that 
\begin{align*}
\bm{H}^{1}_{0,\sigma}(\Omega)\times \bm{H}^{1}_{0,\sigma}(\Omega) \ni (\bm{u}, \bm{v}) \mapsto \int_{\Omega} \alpha_{\eps}(\varphi_{\eps}) \bm{u} \cdot \bm{v} + \mu \nabla \bm{u} \cdot \nabla \bm{v} + (\bm{u} \cdot \nabla )\bm{u}_{\eps} \cdot \bm{v} + (\bm{u}_{\eps} \cdot \nabla) \bm{u} \cdot \bm{v} \dx
\end{align*}
defines a coercive, continuous bilinear form.   Hence, we may use the Lax--Milgram theorem and standard results for the solvability of the divergence operator, see for instance \cite[Lemma II.2.1.1]{book:Sohr}, in order to obtain that $\der_{(\bm{u},p)}F(\varphi_{\eps}, \bm{u}_{\eps} - \bm{G}, p_{\eps})$ is an isomorphism.

Next, we want to consider the differentiability of $F$ with respect to its first argument.  For this purpose, we have to consider $\alpha_{\eps}: L^{6}(\Omega) \to L^{\frac{3}{2}}(\Omega)$ as a Nemytskii operator, making in particular use of the embedding $H^{1}(\Omega) \hookrightarrow L^{6}(\Omega)$.  The results in \cite[Section 4.3.3]{book:Troeltzsch} ensure that $\alpha_{\eps} : L^{6}(\Omega)\to L^{\frac{3}{2}}(\Omega)$ defines a Fr\'{e}chet-differentiable Nemytskii operator, which follows from the assumption $\alpha_{\eps} \in L^{\infty}(\R)\cap C^{1,1}(\R)$.  We can then conclude directly that $F$ is Fr\'{e}chet differentiable with respect to its first argument with 
\begin{align*}
\der_{\varphi} F_{1} (\varphi, \bm{u} - \bm{G}, p)(\tilde{\varphi})\bm{v} = \int_{\Omega} \alpha'_{\eps}(\varphi) \tilde{\varphi} \bm{u} \cdot \bm{v} \dx, \quad \der_{\varphi} F_{2}(\varphi, \bm{u} - \bm{G}, p) = 0.
\end{align*}

Additionally, we need that $F$ is Fr\'{e}chet differentiable in a neighborhood of $(\varphi_{\eps}, \bm{u}_{\eps}, p_{\eps})$.  To show this, we will use \cite[Proposition 4.14]{book:Zeidler1}, i.e., we show that the partial derivatives are continuous in order to conclude that $F$ is Fr\'{e}chet differentiable.  Thus let $(\varphi_{k}, \bm{u}_{k}, p_{k})_{k \in \N} \subset (H^{1}(\Omega) \cap L^{\infty}(\Omega)) \times \bm{H}^{1}_{0}(\Omega) \times L^{2}_{0}(\Omega)$ be sequences with 
\begin{align*}
\lim_{k \to \infty} \norm{\bm{u}_{k} - \bm{u}}_{\bm{H}^{1}(\Omega)} = 0, \quad \lim_{k \to \infty} \norm{p_{k} - p}_{L^{2}(\Omega)} = 0, \quad \lim_{k \to \infty} \norm{\varphi_{k} - \varphi}_{H^{1}(\Omega) \cap L^{\infty}(\Omega)} = 0. 
\end{align*}
As $\alpha_{\eps}:L^6(\Omega)\to L^{\frac{3}{2}}(\Omega)$ defines a continuous Nemytskii-operator, making additionally use of the continuity properties of the trilinear form as stated in Lemma \ref{l:TrilinearFormStrongCont}, we can deduce that
\begin{align*}
\lim_{k \to \infty} \norm{\der_{(\bm{u}, p)} F(\varphi_{k}, \bm{u}_{k}, p_{k}) - \der_{(\bm{u}, p)} F(\varphi, \bm{u}, p)}_{\mathcal{L}(\bm{H}^{1}_{0}(\Omega) \times L^{2}_{0}(\Omega), \bm{H}^{-1}(\Omega) \times L^{2}_{0}(\Omega))} = 0.
\end{align*} 

Moreover, from $\alpha'_{\eps} \in C^{0,1}$ and standard results for Nemytskii operators we find that $L^{6}(\Omega) \ni \varphi \mapsto \alpha'_{\eps}(\varphi) \in L^{6}(\Omega)$ is continuous.  And thus we also find by direct calculations that $\lim_{k \to \infty} \norm{\der_{\varphi}F (\varphi_{k}, \bm{u}_{k}, p_{k}) - \der_{\varphi}F(\varphi, \bm{u}, p)}_{\mathcal{L}(H^{1}(\Omega), \bm{H}^{-1}(\Omega) \times L^{2}_{0}(\Omega))}=0$.  Therefore, we obtain that $F$ is Fr\'{e}chet differentiable.

Finally, applying the implicit function theorem, we obtain for $\norm{\varphi-\varphi_{\eps}}_{H^{1}(\Omega) \cap L^{\infty}(\Omega)} \ll 1$ the existence and uniqueness of a pair $(\bm{u}, p)$ such that $F(\varphi, \bm{u} - \bm{G}, p) = 0$, i.e., $(\bm{u}, p) \in \bm{S}_{\eps}(\varphi)$.  This implies the first part of the statement. The second part of the lemma is a consequence of the differentiability statement of the implicit function theorem:
\begin{align*}
\der \bm{S}_{\eps}(\varphi_{\eps}) = -\left(\der_{(\bm{u}, p)} F (\varphi_{\eps}, \bm{u}_{\eps} -\bm{G}, p_{\eps}) \right)^{-1}\circ \der_{\varphi} F( \varphi_{\eps}, \bm{u}_{\eps} - \bm{G}, p_{\eps}),
\end{align*}
which reads in our setting as $\div \bm{u} = 0$ and
\begin{equation}\label{e:PhaseLinearizedStateWeakform}
\begin{aligned}
& \; \int_{\Omega} \alpha'_{\eps}(\varphi_{\eps})\varphi \bm{u}_{\eps} \cdot \bm{v} + \alpha_{\eps}(\varphi_{\eps}) \bm{u} \cdot \bm{v} + \mu \nabla \bm{u} \cdot \nabla \bm{v} \dx \\
+ & \; \int_{\Omega} (\bm{u} \cdot \nabla )\bm{u}_{\eps}\cdot \bm{v} + (\bm{u}_{\eps} \cdot \nabla)\bm{u} \cdot \bm{v} - p \div \bm{v} \dx = 0 \quad \forall \bm{v} \in \bm{H}^{1}_{0}(\Omega).
\end{aligned}
\end{equation}
\end{proof}

We denote by $\der_{i}h(x, \bm{A}, s, \bm{w})$ for $i \in \{1, 2, 3, 4\}$ as the differential of
\begin{align*}
\Omega \times \R^{d \times d} \times \R \times \R^{d} \ni (x, \bm{A}, s, \bm{w}) \mapsto h(x, \bm{A}, s, \bm{w})
\end{align*}
with respect to the $i$-th variable, respectively.

\begin{assumption}\label{assump:regularityh}
In addition to Assumption \ref{assump:generalh}, assume further that $x \mapsto h(x, \bm{A}, s, \bm{w})$ is in $W^{1,1}(\Omega)$ for all $(\bm{A}, s, \bm{w}) \in  \R^{d \times d} \times \R \times \R^{d}$ and the partial derivatives 
\begin{align*}
\der_{2}h(x, \cdot, s, \bm{w}), \; \der_{3}h(x, \bm{A}, \cdot, \bm{w}), \; \der_{4}h(x, \bm{A}, s, \cdot )
\end{align*} 
exist for all $\bm{w} \in \R^{d}$, $s \in \R$, $\bm{A} \in \R^{d \times d}$, and almost all $x \in \Omega$.  Moreover, we assume that 
\begin{equation}\label{equ:partialderivativesh}
\abs{\der_{i} h(x, \bm{A}, s, \bm{w})} \leq \tilde{a}(x) + \tilde{b}_{1}(x) \abs{\bm{A}} + \tilde{b}_{2}(x) \abs{s} + \tilde{b}_{3}(x) \abs{\bm{w}}, \text{ for } i \in \{2, 3, 4 \},
\end{equation}
for some non-negative $\tilde{a} \in L^{1}(\Omega)$, $\tilde{b}_{1}, \tilde{b}_{2}, \tilde{b}_{3} \in L^{\infty}(\Omega)$.
\end{assumption}

From Assumption \ref{assump:regularityh} we see that
\begin{align*}
(L^{2}(\Omega))^{d \times d} \ni \bm{A} & \mapsto \der_{2}h( \cdot, \bm{A}, s, \bm{w}) \in L^{2}(\Omega), \\
L^{2}(\Omega) \ni s & \mapsto \der_{3}h( \cdot, \bm{A}, s, \bm{w}) \in L^{2}(\Omega), \\
(L^{2}(\Omega))^{d} \ni \bm{w} & \mapsto \der_{4}h( \cdot, \bm{A}, s, \bm{w}) \in L^{2}(\Omega),
\end{align*}
are well-defined Nemytskii operators for $\bm{A} \in (L^{2}(\Omega))^{d \times d}$, $s \in L^{2}(\Omega)$, and $\bm{w} \in (L^{2}(\Omega))^{d}$ if and only if \eqref{equ:partialderivativesh} is fulfilled.  Moreover, the operator
\begin{align*}
(L^{2}(\Omega))^{d \times d} \times L^{2}(\Omega) \times (L^{2}(\Omega))^{d} \ni (\bm{A}, s, \bm{w}) \mapsto h( \cdot, \bm{A}, s, \bm{w}) \in L^{1}(\Omega)
\end{align*}
is continuously Fr\'{e}chet differentiable.

Next, by Assumption \ref{assump:psi}, $\psi \in C^{1,1}(\R)$, we have that $\der_{y}( \sqrt{\psi(y) + \delta_{\eps}})$ is locally Lipschitz and thus the Nemytskii operator
\begin{align*}
L^{\infty}(\Omega) \ni \varphi \mapsto \sqrt{\psi(\varphi) + \delta_{\eps}} \in L^{\infty}(\Omega)
\end{align*}
is continuously Fr\'{e}chet differentiable.  Hence, we find that
\begin{align*}
\mathcal{H} : \bm{H}^{1}(\Omega) \times L^{2}(\Omega) \times H^{1}(\Omega) \cap L^{\infty}(\Omega) \ni (\bm{u}, p, \varphi) \mapsto \int_{\Omega} \mathcal{M}(\varphi) h(x, \nabla \bm{u}, p, \nabla \varphi) \dx
\end{align*}
is continuously Fr\'{e}chet differentiable and its distributional derivative is given as
\begin{equation}\label{FrechDerivativemathcalH}
\begin{aligned}
\der \mathcal{H}(\bm{u}, p, \varphi)(\bm{v}, s, \eta) & = \int_{\Omega} \mathcal{M}(\varphi) (\der_{2}h, \der_{3}h, \der_{4}h)\mid_{(x, \nabla \bm{u}, p, \nabla \varphi)} \cdot (\nabla \bm{v}, s, \nabla \eta) \dx \\
& + \int_{\Omega} h(x, \nabla \bm{u}, p, \nabla \varphi) \mathcal{M}'(\varphi) \eta \dx.
\end{aligned}
\end{equation}

We note that for the choice $\mathcal{M}(\varphi) = \frac{1}{2}$, the second integral on the right hand side of \eqref{FrechDerivativemathcalH} vanishes as the Fr\'{e}chet derivative of $\frac{1}{2}$ is the zero functional.  On the other hand, for the choice $\mathcal{M}(\varphi) = \frac{1}{c_{0}} \sqrt{\frac{\psi(\varphi) + \delta_{\eps}}{2}}$, the Fr\'{e}chet derivative is given as
\begin{align}
\mathcal{M}'(\varphi) = \frac{1}{c_{0}} \frac{\psi'(\varphi)}{2 \sqrt{2 (\psi(\varphi) + \delta_{\eps})}}.
\end{align}

Before formulating the optimality system we want to discuss the adjoint system.  The pair of adjoint variables $(\bm{q}_{\eps}, \pi_{\eps}) \in \bm{H}^{1}_{0}(\Omega) \times L^{2}(\Omega)$ is the weak solution of the adjoint system, which is given as follows: find $(\bm{q}_{\eps}, \pi_{\eps}) \in \bm{H}^{1}_{0}(\Omega) \times L^{2}(\Omega)$ such that
\begin{subequations}\label{generalh:adjointsystem}
\begin{align}
\notag \alpha_{\eps} (\varphi_{\eps})(\bm{q}_{\eps} & - \bm{u}_{\eps}) - \mu \div (\nabla \bm{q}_{\eps} + (\nabla \bm{q}_{\eps})^{T}) + (\nabla \bm{u}_{\eps})^{T} \bm{q}_{\eps} - (\bm{u}_{\eps} \cdot \nabla) \bm{q}_{\eps} + \nabla \pi_{\eps} \\
& =  - \div \left ( \mathcal{M}(\varphi) \der_{2}h \right )  && \text{ in } \Omega, \\
\div \bm{q}_{\eps} & = - \mathcal{M}(\varphi) \der_{3}h + \vartheta_{\eps} && \text{ in } \Omega, \\
\bm{q}_{\eps}& = \bm{0} && \text{ on } \pd \Omega,
\end{align}
\end{subequations}
where $\der_{2}h, \der_{3}h$ are evaluated at $(x, \nabla \bm{u}_{\eps}, p_{\eps}, \nabla \varphi_{\eps})$ and
\begin{align}\label{defn:vartheta}
\vartheta_{\eps} := \strokedint_{\Omega} \mathcal{M}(\varphi) \der_{3}h(x, \nabla \bm{u}_{\eps}, p_{\eps}, \nabla \varphi_{\eps}) \dx.
\end{align}

\begin{rem}\label{r:VarThetaAsLagrangeMult}
The parameter $\vartheta_{\eps} \in \R$ can be interpreted as a Lagrange multiplier for the constraint $\int_{\Omega} p\dx = 0$.  By carrying out the formal Lagrange method as described for instance in \cite{book:Hinzeetal,book:Troeltzsch} and appending the mean value condition on the pressure $p$ with some Lagrange multiplier $\vartheta_{\eps}$ to the Lagrangian, one obtains that $\vartheta_{\eps}$ appears in the adjoint system as in \eqref{generalh:adjointsystem}.
\end{rem}

The next lemma shows that the system \eqref{generalh:adjointsystem} is uniquely solvable:

\begin{lem}\label{l:AdjointSysWellDef}
Let Assumptions \ref{assump:psi}, \ref{assump:alpha}, and \ref{assump:regularityh} hold, and let $\varphi_{\eps} \in H^{1}(\Omega) \cap L^{\infty}(\Omega)$ and $\bm{u}_{\eps} \in \bm{H}^{1}_{\bm{g}, \sigma}(\Omega)$ such that $\norm{\nabla \bm{u}_{\eps}}_{\bm{L}^{2}(\Omega)} < \frac{\mu}{K_{\Omega}}$ be given.  Then there exists a unique solution pair $(\bm{q}_{\eps}, \pi_{\eps}) \in \bm{H}^{1}_{0}(\Omega) \times L^{2}(\Omega)$ of the adjoint system \eqref{generalh:adjointsystem}.
\end{lem}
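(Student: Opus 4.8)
The system \eqref{generalh:adjointsystem} is a linear, Oseen-type adjoint Navier--Stokes problem, so the plan is to recast it as a generalized Stokes problem and solve the velocity part by Lax--Milgram on the solenoidal space, afterwards recovering the pressure by a de Rham argument. Three structural features must be handled: the inhomogeneous divergence constraint, the functional-analytic setting of the right-hand sides, and above all the coercivity of the bilinear form, which is exactly where the hypothesis $\norm{\nabla\bm u_\eps}_{\bm L^2(\Omega)} < \mu/K_\Omega$ enters.

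First I would check the data. Writing $\der_2 h,\der_3 h$ for the partial derivatives evaluated at $(x,\nabla\bm u_\eps,p_\eps,\nabla\varphi_\eps)$, the growth bound \eqref{equ:partialderivativesh} together with $\nabla\bm u_\eps,p_\eps,\nabla\varphi_\eps\in\bm L^2(\Omega)\times L^2(\Omega)\times\bm L^2(\Omega)$ and the boundedness of $\mathcal M(\varphi)$ (recall $\varphi\in L^\infty(\Omega)$ and $\psi$ is locally bounded) gives $\mathcal M(\varphi)\der_2 h\in\bm L^2(\Omega)$ and $\mathcal M(\varphi)\der_3 h\in L^2(\Omega)$, so $-\div(\mathcal M(\varphi)\der_2 h)\in\bm H^{-1}(\Omega)$; for the concrete functionals of interest $\der_2 h,\der_3 h$ are in fact bounded. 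Crucially, the definition \eqref{defn:vartheta} of $\vartheta_\eps$ is precisely the mean of $\mathcal M(\varphi)\der_3 h$, so the right-hand side $-\mathcal M(\varphi)\der_3 h+\vartheta_\eps$ of the divergence equation has zero mean. Hence, by the solvability of the divergence equation \cite[Lemma II.2.1.1]{book:Sohr}, there is a lift $\bm q_0\in\bm H^1_0(\Omega)$ with $\div\bm q_0 = -\mathcal M(\varphi)\der_3 h+\vartheta_\eps$, and setting $\bm q_\eps = \bm q_0 + \tilde{\bm q}$ reduces the problem to finding the solenoidal correction $\tilde{\bm q}\in\bm H^1_{0,\sigma}(\Omega)$.

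Testing the momentum equation against $\bm v\in\bm H^1_{0,\sigma}(\Omega)$, so that the pressure term drops out, leads to $\mathcal B(\tilde{\bm q},\bm v)=\langle\mathcal F,\bm v\rangle$ for all $\bm v\in\bm H^1_{0,\sigma}(\Omega)$, where, using the trilinear form of Lemma \ref{l:PropertiesTrilinearForm},
\begin{align*}
\mathcal B(\bm q,\bm v) := \int_\Omega \alpha_\eps(\varphi_\eps)\bm q\cdot\bm v + \mu\,(\nabla\bm q + (\nabla\bm q)^T):\nabla\bm v\,\dx + b(\bm v,\bm u_\eps,\bm q) - b(\bm u_\eps,\bm q,\bm v),
\end{align*}
and $\mathcal F$ collects the known terms $\alpha_\eps(\varphi_\eps)\bm u_\eps$, $\mathcal M(\varphi)\der_2 h$ and the lift contribution $-\mathcal B(\bm q_0,\cdot)$. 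Continuity of $\mathcal B$ and boundedness of $\mathcal F$ follow from $\alpha_\eps\in L^\infty(\R)$ and estimate \eqref{e:ContinuityEstimateTrilinearForm}. The heart of the proof is coercivity. For $\bm v\in\bm H^1_{0,\sigma}(\Omega)$ the symmetric-gradient term reduces to the full gradient, $\int_\Omega(\nabla\bm v + (\nabla\bm v)^T):\nabla\bm v\,\dx = \norm{\nabla\bm v}_{\bm L^2(\Omega)}^2$, since $\int_\Omega(\nabla\bm v)^T:\nabla\bm v\,\dx = \int_\Omega(\div\bm v)^2\,\dx = 0$, while $b(\bm u_\eps,\bm v,\bm v)=0$ by \eqref{e:TrilinearformLastTwoEqualZero}. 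The only indefinite contribution is $b(\bm v,\bm u_\eps,\bm v)$, which by \eqref{e:ContinuityEstimateTrilinearForm} satisfies $\abs{b(\bm v,\bm u_\eps,\bm v)}\leq K_\Omega\norm{\nabla\bm u_\eps}_{\bm L^2(\Omega)}\norm{\nabla\bm v}_{\bm L^2(\Omega)}^2$. Discarding the nonnegative $\alpha_\eps$-term,
\begin{align*}
\mathcal B(\bm v,\bm v)\geq \big(\mu - K_\Omega\norm{\nabla\bm u_\eps}_{\bm L^2(\Omega)}\big)\norm{\nabla\bm v}_{\bm L^2(\Omega)}^2,
\end{align*}
and the prefactor is strictly positive precisely because $\norm{\nabla\bm u_\eps}_{\bm L^2(\Omega)}<\mu/K_\Omega$. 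With the Poincar\'e inequality this yields coercivity on $\bm H^1_{0,\sigma}(\Omega)$, so Lax--Milgram gives a unique $\tilde{\bm q}$, hence a unique $\bm q_\eps\in\bm H^1_0(\Omega)$.

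Finally I would recover the pressure. By construction the momentum residual vanishes against solenoidal test fields, i.e. $\mathcal B(\bm q_\eps,\bm v)=\int_\Omega\alpha_\eps(\varphi_\eps)\bm u_\eps\cdot\bm v + \mathcal M(\varphi)\der_2 h:\nabla\bm v\,\dx$ for all $\bm v\in\bm H^1_{0,\sigma}(\Omega)$; de Rham's theorem, again in the form \cite[Lemma II.2.1.1]{book:Sohr}, then provides $\pi_\eps\in L^2(\Omega)$, determined up to an additive constant, such that the full momentum equation of \eqref{generalh:adjointsystem} holds in $\bm H^{-1}(\Omega)$. Uniqueness of $\bm q_\eps$ is immediate from coercivity, and two admissible pressures differ by a field with vanishing gradient, i.e. a constant, which pins down the pair after the usual normalization. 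The main obstacle is thus concentrated in the coercivity estimate, and specifically in showing that the smallness of $\norm{\nabla\bm u_\eps}_{\bm L^2(\Omega)}$ is exactly what is needed to absorb the skew convective term $b(\bm v,\bm u_\eps,\bm v)$; the remaining steps are the standard lift-and-de-Rham machinery for generalized Stokes systems.
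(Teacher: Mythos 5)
Your proposal is correct and follows essentially the same route as the paper's proof: lift the inhomogeneous divergence constraint using the zero-mean property built into $\vartheta_{\eps}$ and \cite[Lemma II.2.1.1]{book:Sohr}, reduce the symmetric-gradient term to the full gradient on solenoidal fields, obtain coercivity of the resulting Oseen bilinear form from $\norm{\nabla \bm{u}_{\eps}}_{\bm{L}^{2}(\Omega)} < \mu/K_{\Omega}$ together with \eqref{e:ContinuityEstimateTrilinearForm} and \eqref{e:TrilinearformLastTwoEqualZero}, apply Lax--Milgram, and recover the pressure by de Rham. The only cosmetic difference is that the paper proves the identity $\int_{\Omega} \nabla \bm{y} \cdot (\nabla \bm{v})^{T} \dx = 0$ for general $\bm{y} \in \bm{H}^{1}_{0}(\Omega)$, $\bm{v} \in \bm{H}^{1}_{0,\sigma}(\Omega)$ and simplifies the bilinear form globally, whereas you use only the diagonal case in the coercivity step; both are adequate.
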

\begin{proof}
First, we notice that by definition of $\vartheta_{\eps}$ \eqref{defn:vartheta}, it holds that
\begin{align*}
\int_{\Omega} \mathcal{M}(\varphi) \der_{3}h(x, \nabla \bm{u}_{\eps}, p_{\eps}, \nabla \varphi_{\eps})  - \vartheta_{\eps} \dx = 0.
\end{align*}
As $\varphi_{\eps} \in L^{\infty}(\Omega)$, we have $\mathcal{M}(\varphi) \in L^{\infty}(\Omega)$ for either choices.  Thus, by Assumption \ref{assump:regularityh}, we obtain that $\mathcal{M}(\varphi) \der_{3}h \in L^2(\Omega)$.  So, from standard results, see for instance \cite[Lemma II.2.1.1]{book:Sohr}, we deduce the existence of some $\bm{w} \in \bm{H}^{1}_{0}(\Omega)$ such that 
\begin{align*}
\div \bm{w} = - \mathcal{M}(\varphi) \der_{3}h + \vartheta_{\eps}.
\end{align*}
Note that, by the density of $\bm{C}^{\infty}_{0,\sigma}(\Omega) := \{ \bm{v} \in (C^{\infty}_{0}(\Omega))^{d} \, | \div \bm{v}  = 0 \}$ in $\bm{H}^{1}_{0,\sigma}(\Omega)$ (see \cite[Lemma II.2.2.3]{book:Sohr}), for any $\bm{v} \in \bm{H}^{1}_{0,\sigma}(\Omega)$, there exists a sequence $\{\bm{v}^{n}\}_{n \in \N} \subset \bm{C}^{\infty}_{0,\sigma}(\Omega)$ such that 
\begin{align*}
\norm{\bm{v}^{n} - \bm{v}}_{\bm{H}^{1}(\Omega)} \to 0 \text{ as } n \to \infty.
\end{align*}
Thus, for any $\bm{y} \in \bm{H}^{1}_{0}(\Omega), \bm{v} \in \bm{H}^{1}_{0,\sigma}(\Omega)$, we find that by the commutativity of second derivatives,
\begin{equation}\label{nablaunablavTranspose}
\begin{aligned}
& \; \int_{\Omega} \nabla \bm{y} \cdot (\nabla \bm{v})^{T} \dx = \lim_{n \to \infty} \int_{\Omega} \nabla \bm{y} \cdot (\nabla \bm{v}^{n})^{T} \dx \\
= & \; \lim_{n \to \infty} \sum_{i,j=1}^{d} \int_{\Omega} \pd_{i} y_{j} \pd_{j} v^{n}_{i} \dx = \lim_{n \to \infty} \sum_{i,j=1}^{d} \left ( \int_{\pd \Omega} y_{j} \pd_{j} v^{n}_{i} \nu_{\pd \Omega,i} \dHaus - \int_{\Omega} y_{j} \pd_{j} \pd_{i} v^{n}_{i} \dx \right )\\
= & \; \lim_{n \to \infty} \int_{\pd \Omega} (\bm{y} \cdot \nabla) \bm{v}^{n} \cdot \bm{\nu}_{\pd \Omega} \dHaus - \int_{\Omega} \bm{y} \cdot \nabla (\div \bm{v}^{n}) \dx = 0.
\end{aligned}
\end{equation}

We define the bilinear form $a: \bm{H}^{1}_{0,\sigma}(\Omega) \times \bm{H}^{1}_{0,\sigma}(\Omega) \to \left ( \bm{H}^{1}_{0,\sigma}(\Omega) \right )'$ by 
\begin{equation}\label{adjoint:bilinearform}
\begin{aligned}
a(\bm{u}, \bm{v}) & := \int_{\Omega} \alpha_{\eps}(\varphi_{\eps}) \bm{u} \cdot \bm{v} + \mu \nabla \bm{u} \cdot (\nabla \bm{v} + (\nabla \bm{v})^{T}) + (\nabla \bm{u}_{\eps})^{T} \bm{u} \cdot \bm{v} - (\bm{u}_{\eps} \cdot \nabla) \bm{u} \cdot \bm{v} \dx \\
& = \int_{\Omega} \alpha_{\eps}(\varphi_{\eps}) \bm{u} \cdot \bm{v} + \mu \nabla \bm{u} \cdot \nabla \bm{v} + (\nabla \bm{u}_{\eps})^{T} \bm{u} \cdot \bm{v} - (\bm{u}_{\eps} \cdot \nabla) \bm{u} \cdot \bm{v} \dx,
\end{aligned}
\end{equation}
where we have used \eqref{nablaunablavTranspose} for $\bm{u}, \bm{v} \in \bm{H}^{1}_{0,\sigma}(\Omega)$.  Making use of $\norm{\nabla \bm{u}_{\eps}}_{\bm{L}^{2}(\Omega)} < \frac{\mu}{K_{\Omega}}$, \eqref{e:ContinuityEstimateTrilinearForm}, \eqref{e:TrilinearformLastTwoEqualZero}, and the Poincar\'{e} inequality, we can establish that $a(\cdot, \cdot)$ is a coercive bilinear form, i.e., there exists a constant $c(\mu, \abs{\Omega}) > 0$ such that,
\begin{align*}
a(\bm{u}, \bm{u}) & = \int_{\Omega} \underbrace{\alpha_{\eps}(\varphi_{\eps})}_{\geq 0} \abs{\bm{u}}^{2} + \mu \abs{\nabla \bm{u}}^{2} \dx + b(\bm{u}, \bm{u}_{\eps}, \bm{u}) - \underbrace{b(\bm{u}_{\eps}, \bm{u}, \bm{u})}_{=0 \text{ by }\eqref{e:TrilinearformLastTwoEqualZero}} \\
& \geq \mu \norm{\nabla \bm{u}}_{\bm{L}^{2}(\Omega)}^{2} - K_{\Omega} \norm{\nabla \bm{u}}_{\bm{L}^{2}(\Omega)}^{2} \norm{\nabla \bm{u}_{\eps}}_{\bm{L}^{2}(\Omega)} \geq c(\mu, \abs{\Omega}) \norm{\bm{u}}_{\bm{H}^{1}_{0}(\Omega)}^{2}.
\end{align*} 

Meanwhile, the boundedness of the bilinear form $a(\cdot, \cdot)$ in $\bm{H}^{1}_{0,\sigma}(\Omega) \times \bm{H}^{1}_{0,\sigma}(\Omega)$ can be shown using \eqref{e:ContinuityEstimateTrilinearForm}, the boundedness of $\alpha_{\eps}$, H\"{o}lder's inequality and the assumption $\norm{\nabla \bm{u}_{\eps}}_{\bm{L}^{2}(\Omega)} < \frac{\mu}{K_{\Omega}}$.  Thus, by the Lax--Milgram theorem, we obtain a unique $\hat{\bm{q}} \in \bm{H}^{1}_{0,\sigma}(\Omega)$ such that
\begin{align}\label{adjoint:weakform}
a(\hat{\bm{q}},\bm{v}) = \int_{\Omega}\alpha_{\eps}(\varphi_{\eps}) \bm{u}_{\eps} \cdot \bm{v} + \mathcal{M}(\varphi) ( \der_{2}h \cdot \nabla \bm{v} ) \dx - a(\bm{w}, \bm{v}) \quad \forall  \bm{v} \in \bm{H}^{1}_{0,\sigma}(\Omega).
\end{align}
We note that the integral terms are well-defined due to Assumption \ref{assump:regularityh} and the boundedness of $\alpha_{\eps}$.  We set $\bm{q}_{\eps} := \hat{\bm{q}} + \bm{w}$.  The existence of $\pi_{\eps} \in L^{2}(\Omega)$ follows from standard results, see for instance \cite[Lemma II.2.2.1]{book:Sohr}.  Thus, $(\bm{q}_{\eps}, \pi_{\eps})$ is the unique weak solution of the adjoint system \eqref{generalh:adjointsystem}.
\end{proof}

Now we can formulate necessary optimality conditions for our optimal control problem:

\begin{thm}\label{t:GeneralisationOptimality}
Let $(\varphi_{\eps}, \bm{u}_{\eps}, p_{\eps}) \in (\Phi_{ad} \cap L^{\infty}(\Omega)) \times \bm{H}^{1}_{\bm{g}, \sigma}(\Omega) \times L^{2}_{0}(\Omega)$ be a minimizer of $J_{\eps}^{h}$ such that $\norm{\nabla \bm{u}_{\eps}}_{\bm{L}^{2}(\Omega)} < \frac{\mu}{K_{\Omega}}$.  Then the following optimality system is fulfilled:  There exists a Lagrange multiplier $\lambda_{\eps} \in \R$ for the integral constraint such that
\begin{equation}\label{generalh:PhaseFieldOptSys}
\begin{aligned}
&\; \left(\alpha'_{\eps}(\varphi_{\eps}) \left(\frac{1}{2} \abs{\bm{u}_{\eps}}^{2} - \bm{u}_{\eps} \cdot \bm{q}_{\eps} \right) +\frac{\gamma}{2c_{0} \eps} \psi'(\varphi_{\eps}) +\lambda_{\eps} + \mathcal{M}'(\varphi_{\eps}) h(x, \nabla \bm{u}_{\eps}, p_{\eps}, \nabla \varphi_{\eps}) , \zeta \right)_{L^{2}(\Omega)} \\
+ \; & \left( \mathcal{M}(\varphi_{\eps}) \der_{4}h(x, \nabla \bm{u}_{\eps}, p_{\eps}, \nabla \varphi_{\eps}) + \frac{\gamma \eps}{2c_{0}} \nabla \varphi_{\eps} , \nabla \zeta \right)_{\bm{L}^{2}(\Omega)} = 0 \quad \forall \zeta \in H^{1}(\Omega) \cap L^{\infty}(\Omega).
\end{aligned}
\end{equation}
Here, $(\bm{q}_{\eps}, \pi_{\eps}) \in \bm{H}^{1}_{0}(\Omega) \times L^{2}(\Omega)$ is the unique weak solution of the adjoint system \eqref{generalh:adjointsystem}.
\end{thm}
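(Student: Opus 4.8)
The plan is to pass to the reduced functional and apply the Lagrange multiplier rule, using the adjoint state $(\bm{q}_{\eps}, \pi_{\eps})$ to eliminate the sensitivity of the state with respect to $\varphi$. Since $(\bm{u}_{\eps}, p_{\eps}) \in \bm{S}_{\eps}(\varphi_{\eps})$ with $\norm{\nabla \bm{u}_{\eps}}_{\bm{L}^{2}(\Omega)} < \mu/K_{\Omega}$, Lemma \ref{l:SolOpDiffable} guarantees that $\bm{S}_{\eps}$ is single-valued and Fr\'echet differentiable on a neighbourhood $N$ of $\varphi_{\eps}$ in $H^{1}(\Omega) \cap L^{\infty}(\Omega)$. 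Together with the continuous Fr\'echet differentiability of $\mathcal{H}$ recorded in \eqref{FrechDerivativemathcalH} and the smoothness of the remaining terms, the reduced functional $j(\varphi) := J_{\eps}^{h}(\varphi, \bm{S}_{\eps}(\varphi))$ is Fr\'echet differentiable on $N$. As $\varphi_{\eps}$ minimizes $j$ subject to the single scalar constraint $G(\varphi) := \int_{\Omega} \varphi \dx - \beta\abs{\Omega} = 0$, whose derivative $\zeta \mapsto \int_{\Omega} \zeta \dx$ is nonzero, the Lagrange multiplier rule yields some $\lambda_{\eps} \in \R$ with $\der j(\varphi_{\eps})(\zeta) + \lambda_{\eps} \int_{\Omega} \zeta \dx = 0$ for all $\zeta \in H^{1}(\Omega) \cap L^{\infty}(\Omega)$.

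Next I would compute $\der j(\varphi_{\eps})(\zeta)$ by the chain rule. Writing $(\bm{u}', p') := \der\bm{S}_{\eps}(\varphi_{\eps})(\zeta) \in \bm{H}^{1}_{0}(\Omega) \times L^{2}_{0}(\Omega)$ for the unique solution of the linearized state system \eqref{e:PhaseLinearizedState}, this splits as $\der j(\varphi_{\eps})(\zeta) = A + B$, where $B$ collects the explicit $\varphi$-dependence,
\begin{align*}
B = \int_{\Omega} \left( \tfrac{1}{2} \alpha'_{\eps}(\varphi_{\eps})\abs{\bm{u}_{\eps}}^{2} + \tfrac{\gamma}{2c_{0}\eps}\psi'(\varphi_{\eps}) + \mathcal{M}'(\varphi_{\eps}) h \right)\zeta + \left( \mathcal{M}(\varphi_{\eps})\der_{4} h + \tfrac{\gamma\eps}{2c_{0}}\nabla\varphi_{\eps} \right)\cdot\nabla\zeta \dx
\end{align*}
(using \eqref{FrechDerivativemathcalH}, with $h, \der_{4} h$ evaluated at $(x, \nabla\bm{u}_{\eps}, p_{\eps}, \nabla\varphi_{\eps})$), and $A$ is the state-sensitivity term
\begin{align*}
A = \int_{\Omega} \alpha_{\eps}(\varphi_{\eps})\bm{u}_{\eps}\cdot\bm{u}' + \mathcal{M}(\varphi_{\eps})\left( \der_{2} h\cdot\nabla\bm{u}' + \der_{3} h \, p' \right) \dx.
\end{align*}

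The decisive step is to rewrite $A$ using the adjoint. I would test the weak linearized equation \eqref{e:PhaseLinearizedStateWeakform} with $\bm{v} = \bm{q}_{\eps} \in \bm{H}^{1}_{0}(\Omega)$ and test the weak adjoint equation from Lemma \ref{l:AdjointSysWellDef} with the divergence-free field $\bm{u}' \in \bm{H}^{1}_{0,\sigma}(\Omega)$, then subtract. The symmetric terms $\alpha_{\eps}(\varphi_{\eps})\bm{u}'\cdot\bm{q}_{\eps}$ and $\mu\nabla\bm{u}'\cdot\nabla\bm{q}_{\eps}$ cancel, and the two convective cross terms cancel because $(\nabla\bm{u}_{\eps})^{T}\bm{q}_{\eps}\cdot\bm{u}' = b(\bm{u}', \bm{u}_{\eps}, \bm{q}_{\eps})$ while $-(\bm{u}_{\eps}\cdot\nabla)\bm{q}_{\eps}\cdot\bm{u}' = b(\bm{u}_{\eps}, \bm{u}', \bm{q}_{\eps})$ by the antisymmetry \eqref{e:TrilinearformLastTwoSwitch}, matching exactly the nonlinear terms produced by testing \eqref{e:PhaseLinearizedStateWeakform}. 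What remains is the pressure coupling $-\int_{\Omega} p'\div\bm{q}_{\eps} \dx$; inserting $\div\bm{q}_{\eps} = -\mathcal{M}(\varphi_{\eps})\der_{3} h + \vartheta_{\eps}$ and using $p' \in L^{2}_{0}(\Omega)$ to kill the constant $\vartheta_{\eps}$ turns it into $\int_{\Omega} \mathcal{M}(\varphi_{\eps})\der_{3} h \, p' \dx$. Collecting the surviving terms gives the identity $A = -\int_{\Omega} \alpha'_{\eps}(\varphi_{\eps})\,\zeta\,\bm{u}_{\eps}\cdot\bm{q}_{\eps} \dx$, which expresses the state sensitivity purely through $\zeta$ and the adjoint velocity.

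Substituting this into $\der j(\varphi_{\eps})(\zeta) = A + B$ and inserting the result into the multiplier identity from the first step produces precisely \eqref{generalh:PhaseFieldOptSys}, with the coefficient of $\zeta$ equal to $\alpha'_{\eps}(\varphi_{\eps})(\tfrac{1}{2}\abs{\bm{u}_{\eps}}^{2} - \bm{u}_{\eps}\cdot\bm{q}_{\eps}) + \tfrac{\gamma}{2c_{0}\eps}\psi'(\varphi_{\eps}) + \lambda_{\eps} + \mathcal{M}'(\varphi_{\eps})h$ and that of $\nabla\zeta$ equal to $\mathcal{M}(\varphi_{\eps})\der_{4} h + \tfrac{\gamma\eps}{2c_{0}}\nabla\varphi_{\eps}$. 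I expect the main obstacle to be the adjoint bookkeeping in the third step: one must verify that the adjoint system \eqref{generalh:adjointsystem} has been designed so that every term generated by the linearization cancels against its transpose partner, in particular that the convective terms match through \eqref{e:TrilinearformLastTwoSwitch} and that the non-solenoidal adjoint velocity, via its prescribed divergence and the compensating constant $\vartheta_{\eps}$, correctly reproduces the $\der_{3} h$ contribution while the mean-zero condition on $p'$ removes $\vartheta_{\eps}$. The differentiability in the $L^{\infty}$-topology (forcing $\zeta \in H^{1}(\Omega) \cap L^{\infty}(\Omega)$) and the Lagrange rule are comparatively routine given Lemma \ref{l:SolOpDiffable} and \eqref{FrechDerivativemathcalH}.
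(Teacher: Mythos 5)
Your proposal is correct and follows essentially the same route as the paper: reduce via the single-valued, differentiable solution operator from Lemma \ref{l:SolOpDiffable}, introduce the scalar multiplier $\lambda_{\eps}$ for the volume constraint, and eliminate the state sensitivity by testing the linearized system \eqref{e:PhaseLinearizedStateWeakform} with $\bm{q}_{\eps}$ and the adjoint weak form \eqref{adjoint:weakform} with the linearized state, which yields exactly the paper's identity \eqref{generalh:CompareAdjointAndLinearized} (your $A = -\int_{\Omega}\alpha'_{\eps}(\varphi_{\eps})\zeta\,\bm{u}_{\eps}\cdot\bm{q}_{\eps}\dx$). The cancellations you identify, including the convective pairing via \eqref{e:TrilinearformLastTwoSwitch} and the removal of $\vartheta_{\eps}$ by $p'\in L^{2}_{0}(\Omega)$, are precisely those used in the paper.
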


\begin{proof}
We rewrite the problem \eqref{IntroObjFcltPhase}-\eqref{IntroStateEquPhaseWeak} as a minimizing problem for a reduced objective functional defined on an open set in $H^{1}(\Omega) \cap L^{\infty}(\Omega)$ by making use of Lemma \ref{l:SolOpDiffable}.  In particular, at least in a neighborhood $N \subset H^{1}(\Omega) \cap L^{\infty}(\Omega)$ of $\varphi_{\eps}$, the solution operator $\bm{S}_{\eps}$ is not set-valued, but for every $\varphi \in N$ we have $\bm{S}_{\eps}(\varphi) = \{ (\bm{u}, p) \}$.  Thus we may define the reduced functional $j_{\eps}^{h}: N \to \R$ by 
\begin{align*}
j_{\eps}^{h}(\varphi) := J_{\eps}^{h}(\varphi, \bm{S}_{\eps}(\varphi)).
\end{align*}
Then, $\varphi_{\eps}$ is also a local minimizer of $j_{\eps}^{h}$.  Hence, the gradient equation
\begin{align}\label{generalh:ProofOptSysEpsVarIn}
\der j_{\eps}^{h}(\varphi_{\eps})(\varphi) = 0, \quad \forall \varphi \in H^1(\Omega), \, \int_{\Omega} \varphi \dx=0,
\end{align}
would be fulfilled if $j_{\eps}^{h}$ would be differentiable.  

We will show in the next step that $j_{\eps}^{h}$ is differentiable at $\varphi_{\eps}$ as a mapping from $H^{1}(\Omega) \cap L^{\infty}(\Omega)$ to $\R$.  Lemma \ref{l:SolOpDiffable} already ensures that the solution operator $\bm{S}_{\eps}$ is differentiable from $H^{1}(\Omega) \cap L^{\infty}(\Omega)$ to $\bm{H}^{1}(\Omega)\times L^{2}(\Omega)$.  Thus we now look at dependence of $J_{\eps}^{h}$ on the first variable.

For this purpose we find first as in the proof of Lemma \ref{l:SolOpDiffable} that $\alpha_{\eps}: L^{6}(\Omega) \to L^{\frac{3}{2}}(\Omega)$ is a Fr\'{e}chet differentiable Nemytskii operator, and hence 
\begin{align*}
H^{1}(\Omega) \ni \varphi \mapsto \int_{\Omega} \alpha_{\eps}(\varphi)\abs{\bm{u}}^{2}\dx
\end{align*}
is Fr\'{e}chet differentiable for any $\bm{u}\in \bm{H}^{1}(\Omega)$.  With similar results, i.e. by making use of \cite[Section 4.3.3]{book:Troeltzsch}, we also find that
\begin{alignat*}{3}
L^{\infty}(\Omega)\ni \varphi & \mapsto \psi(\varphi) \in L^{\infty}(\Omega), && \quad L^{\infty}(\Omega)\ni \varphi &&\mapsto \int_{\Omega} \psi(\varphi)\dx, \\
H^{1}(\Omega) \ni \varphi &\mapsto \nabla \varphi \in \bm{L}^{2}(\Omega), && \quad H^{1}(\Omega) \ni \varphi &&\mapsto \int_{\Omega} \abs{\nabla \varphi}^{2} \dx
\end{alignat*}
are differentiable.  Combining these results and the Fr\'{e}chet differentiability of $\mathcal{H}$, we find that $j_{\eps}^{h} : N \to \R$ is differentiable.  Hence we may conclude by the minimizing property of $\varphi_{\eps}$ that the gradient equation \eqref{generalh:ProofOptSysEpsVarIn} is fulfilled.  We then find from \eqref{generalh:ProofOptSysEpsVarIn} that
\begin{align}\label{generalh:ProofOptSysEpsVarEq}
0 = \der j_{\eps}^{h}(\varphi_{\eps})\left(\varphi - \strokedint_{\Omega} \varphi \dx \right) = \der j_{\eps}^{h} (\varphi_{\eps}) (\varphi) + \lambda_{\eps} \int_{\Omega} \varphi \dx \quad \forall \varphi \in H^{1}(\Omega),
\end{align}
where we defined 
\begin{align}\label{defn:lambda}
\lambda_{\eps} := -\abs{\Omega}^{-1} \der j_{\eps} (\varphi_{\eps}) \in \R. 
\end{align}
In particular, we interpret $\lambda_{\eps} \in \R$ as a Lagrange multiplier for the integral constraint $\int_{\Omega} \varphi \dx = \beta \abs{\Omega}$.  

We now want to rewrite \eqref{generalh:ProofOptSysEpsVarEq} into a more convenient form by using the adjoint variable $\bm{q}_{\eps}$, which is defined as the solution of \eqref{generalh:adjointsystem}.  For this purpose we start calculating the derivative of $j_{\eps}^{h}$.  We find for every $\varphi \in H^{1}(\Omega)$ the following formula:
\begin{equation}\label{generalh:DerJEpsiloNFirstForm}
\begin{aligned}
\der j_{\eps}^{h}(\varphi_{\eps}) \varphi &= \int_{\Omega} \frac{1}{2} \alpha'_{\eps}(\varphi_{\eps})\varphi \abs{\bm{u}_{\eps}}^{2} + \alpha_{\eps}(\varphi_{\eps}) \bm{u}_{\eps} \cdot \bm{u} \dx \\
& + \frac{\gamma}{2c_{0}} \int_{\Omega} \eps \nabla \varphi_{\eps} \cdot \nabla \varphi + \frac{1}{\eps} \psi'(\varphi_{\eps}) \varphi \dx \\
& + \int_{\Omega} \mathcal{M}(\varphi_{\eps})(\der_{2}h, \der_{3}h, \der_{4}h)\mid_{(x, \nabla \bm{u}_{\eps}, p_{\eps}, \nabla \varphi_{\eps})} \cdot (\nabla \bm{u}, p, \nabla \varphi) \dx \\
& + \int_{\Omega} h(x, \nabla \bm{u}_{\eps}, p_{\eps}, \nabla \varphi_{\eps}) \mathcal{M}'(\varphi_{\eps}) \varphi \dx. \end{aligned}
\end{equation}
where $\bm{S}_{\eps}(\varphi_{\eps}) = \{ (\bm{u}_{\eps}, p_{\eps})\}$ and $(\bm{u}, p) := \der \bm{S}_{\eps}(\varphi_{\eps}) \varphi$ is the solution of the linearized state equation \eqref{e:PhaseLinearizedState}.  Now we use the adjoint state $\bm{q}_{\eps}$ as a test function in the linearized state equation \eqref{e:PhaseLinearizedState} and find that
\begin{equation}\label{generalh:TestLinearizedWithAdjoint}\begin{aligned}
& \; \int_{\Omega} \alpha'_{\eps}(\varphi_{\eps})\varphi \bm{u}_{\eps} \cdot \bm{q}_{\eps} + \alpha_{\eps}(\varphi_{\eps}) \bm{u} \cdot \bm{q}_{\eps} + \mu \nabla \bm{u} \cdot \nabla \bm{q}_{\eps} \dx \\
+ & \; \int_{\Omega} (\bm{u} \cdot \nabla ) \bm{u}_{\eps} \cdot \bm{q}_{\eps} + (\bm{u}_{\eps} \cdot \nabla) \bm{u} \cdot \bm{q}_{\eps} + p \left ( \mathcal{M}(\varphi_{\eps}) \der_{3}h - \vartheta_{\eps} \right )\dx = 0,
\end{aligned}
\end{equation}
where $\der_{3}h$ is evaluated at $(x, \nabla \bm{u}_{\eps}, p_{\eps}, \nabla \varphi_{\eps})$.  

Then we use the linearized state $\bm{u} \in \bm{H}^{1}_{0,\sigma}(\Omega)$ as a test function in \eqref{adjoint:weakform} and obtain
\begin{equation}\label{generalh:TestAdjointWithLinearized}
\begin{aligned}
&\; \int_{\Omega} \alpha_{\eps}(\varphi_{\eps}) \bm{q}_{\eps} \cdot \bm{u} + \mu \nabla \bm{q}_{\eps} \cdot \nabla \bm{u} + (\nabla \bm{u}_{\eps})^{T} \bm{q}_{\eps} \cdot \bm{u} - (\bm{u}_{\eps} \cdot \nabla) \bm{q}_{\eps} \cdot \bm{u} \dx \\
= & \; \int_{\Omega} \alpha_{\eps}(\varphi_{\eps}) \bm{u}_{\eps} \cdot \bm{u} + \mathcal{M}(\varphi_{\eps}) \left ( \der_{2}h \cdot \nabla \bm{u} \right ) \dx,
\end{aligned}
\end{equation}
where $\der_{2}h$ is evaluated at $(x, \nabla \bm{u}_{\eps}, p_{\eps}, \nabla \varphi_{\eps})$.

Comparing \eqref{generalh:TestLinearizedWithAdjoint} and \eqref{generalh:TestAdjointWithLinearized} yields the following identity
\begin{equation}\label{generalh:CompareAdjointAndLinearized}
\begin{aligned}
\int_{\Omega} \alpha_{\eps}'(\varphi_{\eps}) \varphi \bm{u}_{\eps} \cdot \bm{q}_{\eps} + \alpha_{\eps}(\varphi_{\eps}) \bm{u}_{\eps} \cdot \bm{u} + \mathcal{M}(\varphi_{\eps}) \left ( \der_{2}h \cdot \nabla \bm{u} + p \der_{3}h \right ) \dx = 0,
\end{aligned}
\end{equation}
where we have used that $p \in L^{2}_{0}(\Omega)$, $\div \bm{u}_{\eps} = 0$ in $\Omega$, $\bm{u} = \bm{q}_{\eps} = \bm{0}$ on $\pd \Omega$, and thus
\begin{align*}
\int_{\Omega} p \vartheta_{\eps} \dx = \vartheta_{\eps} \int_{\Omega} p \dx & = 0, \\
\int_{\Omega} (\bm{u}_{\eps} \cdot \nabla) \bm{q}_{\eps} \cdot \bm{u} + (\bm{u}_{\eps} \cdot \nabla) \bm{u} \cdot \bm{q}_{\eps} \dx = \int_{\Omega} \bm{u}_{\eps} \cdot \nabla (\bm{q}_{\eps} \cdot \bm{u}) \dx & = 0.
\end{align*}
Hence, by using \eqref{generalh:CompareAdjointAndLinearized}, we can rewrite \eqref{generalh:DerJEpsiloNFirstForm} as follows:
\begin{equation}\label{generalh:DerJEpsiloNSecondForm}
\begin{aligned}
\der j_{\eps}(\varphi_{\eps}) \varphi &= \int_{\Omega}  \alpha'_{\eps}(\varphi_{\eps})\varphi \left ( \frac{1}{2}\abs{\bm{u}_{\eps}}^{2} - \bm{u}_{\eps} \cdot \bm{q}_{\eps} \right ) + \frac{\gamma \eps}{2c_{0}} \nabla \varphi_{\eps} \cdot \nabla \varphi + \frac{\gamma}{2c_{0} \eps} \psi'(\varphi_{\eps}) \varphi \dx \\
& + \int_{\Omega} \mathcal{M}(\varphi_{\eps}) \der_{4}h(x, \nabla \bm{u}_{\eps}, p_{\eps}, \nabla \varphi_{\eps}) \cdot \nabla \varphi \dx\\
& + \int_{\Omega} h(x, \nabla \bm{u}_{\eps}, p_{\eps}, \nabla \varphi_{\eps}) \mathcal{M}'(\varphi_{\eps}) \varphi  \dx.
\end{aligned}
\end{equation}
Together with \eqref{generalh:ProofOptSysEpsVarEq}, this yields the statement of the theorem.
\end{proof}

The analogous optimality condition for the  optimization problem $\{\eqref{IntroStateEquPhaseWeak}, \eqref{ObjFunctHydroPhase}\}$ involving the hydrodynamic force \eqref{HydroDynamForce} is given as follows:
\begin{thm}\label{t:HydroDynamOptSys}
Let $(\varphi_{\eps}, \bm{u}_{\eps}, p_{\eps}) \in (\Phi_{ad} \cap L^{\infty}(\Omega)) \times \bm{H}^{1}_{\bm{g},\sigma}(\Omega) \times L^{2}_{0}(\Omega)$ be a minimizer of optimization problem $\{\eqref{IntroStateEquPhaseWeak}, \eqref{ObjFunctHydroPhase}\}$ involving the hydrodynamic force \eqref{HydroDynamForce}  with $\norm{\nabla \bm{u}_{\eps}}_{\bm{L}^{2}(\Omega)} < \frac{\mu}{K_{\Omega}}$, thus in particular, $\bm{S}_{\eps}(\varphi_{\eps}) =  \{(\bm{u}_{\eps}, p_{\eps})\}$.  Then the following optimality system is fulfilled:  There exists a Lagrange multiplier $\lambda_{\eps} \in \R$ for the integral constraint such that 
\begin{equation}\label{e:PhaseFieldOptSys}\begin{aligned}
&\; \left(\alpha'_{\eps}(\varphi_{\eps}) \left(\frac{1}{2} \abs{\bm{u}_{\eps}}^{2} - \bm{u}_{\eps} \cdot \bm{q}_{\eps} \right) + \frac{\gamma}{2c_{0} \eps} \psi'(\varphi_{\eps}) +\lambda_{\eps} + \mathcal{M}'(\varphi_{\eps}) \nabla \varphi_{\eps} \cdot \left( \bm{\sigma}_{\eps} \bm{a} \right), \zeta \right)_{L^{2}(\Omega)} \\
+ \; & \left( \mathcal{M}(\varphi_{\eps}) \bm{\sigma}_{\eps} \bm{a}  + \frac{\gamma \eps}{2c_{0}} \nabla \varphi_{\eps} , \nabla \zeta \right)_{\bm{L}^{2}(\Omega)} = 0 \quad \forall \zeta \in H^{1}(\Omega) \cap L^{\infty}(\Omega),
\end{aligned}
\end{equation}
where $\bm{\sigma}_{\eps} := \mu \left(\nabla \bm{u}_{\eps} + (\nabla \bm{u}_{\eps})^{T})  \right) - p_{\eps} \id$, and $(\bm{q}_{\eps}, \pi_{\eps}) \in \bm{H}^{1}_{0}(\Omega) \times L^{2}(\Omega)$ is the unique weak solution of the adjoint system 
\begin{subequations}\label{e:AdjointStrong}\begin{align}
\label{adjoint1}
\notag \alpha_{\eps} (\varphi_{\eps}) (\bm{q}_{\eps}& - \bm{u}_{\eps}) - \mu \nabla \cdot (\nabla \bm{q}_{\eps} + (\nabla \bm{q}_{\eps})^{T}) + (\nabla \bm{u}_{\eps})^{T} \bm{q}_{\eps} - (\bm{u}_{\eps} \cdot \nabla) \bm{q}_{\eps} + \nabla \pi_{\eps} \\
 & = - \mu \left ( \div \left ( \mathcal{M}(\varphi_{\eps}) \nabla \varphi_{\eps} \right ) \bm{a} - \nabla \left ( \mathcal{M}(\varphi_{\eps}) \nabla \varphi_{\eps} \right ) \bm{a} \right ) &&\text{in }\Omega,\\
\label{adjoint2}
\div \bm{q}_{\eps} & = \mathcal{M}(\varphi_{\eps}) \nabla \varphi_{\eps} \cdot \bm{a} - \strokedint_{\Omega} \mathcal{M}(\varphi_{\eps}) \nabla \varphi_{\eps} \cdot \bm{a} \dx && \text{in }\Omega, \\
\label{adjoint3}
 \bm{q}_{\eps} & = \bm{0} && \text{on }\pd \Omega.
\end{align}\end{subequations}
\end{thm}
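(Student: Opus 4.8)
The plan is to observe that the hydrodynamic-force problem $\{\eqref{IntroStateEquPhaseWeak}, \eqref{ObjFunctHydroPhase}\}$ is nothing but the general problem \eqref{IntroObjFcltPhase}-\eqref{IntroStateEquPhaseWeak} for the particular integrand
\begin{align*}
h(x, \bm{A}, s, \bm{w}) = \bm{w} \cdot \left( \mu (\bm{A} + \bm{A}^{T}) - s \id \right) \bm{a},
\end{align*}
so that Theorem \ref{t:HydroDynamOptSys} should follow by specialising Theorem \ref{t:GeneralisationOptimality}. First I would confirm that this $h$ satisfies Assumptions \ref{assump:generalh} and \ref{assump:regularityh}. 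It is independent of $x$ and continuous, hence Carath\'{e}odory; being bilinear, separately, in $(\bm{A}, \bm{w})$ and in $(s, \bm{w})$, Young's inequality yields the quadratic bound required in Assumption \ref{assump:generalh}, and the partial derivatives listed below are affine in $(\bm{A}, s, \bm{w})$, so the linear bound \eqref{equ:partialderivativesh} of Assumption \ref{assump:regularityh} holds with constant coefficients. The boundedness from below and the weak lower semicontinuity of the corresponding $\mathcal{H}$ were already established in the proof of Theorem \ref{t:HydroDyanExistMin}. Since the prescribed minimizer satisfies $\norm{\nabla \bm{u}_{\eps}}_{\bm{L}^{2}(\Omega)} < \frac{\mu}{K_{\Omega}}$, Theorem \ref{t:GeneralisationOptimality} applies.

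Next I would differentiate $h$. With $\bm{\sigma} := \mu(\bm{A} + \bm{A}^{T}) - s \id$ one finds
\begin{align*}
\der_{2} h = \mu (\bm{w} \otimes \bm{a} + \bm{a} \otimes \bm{w}), \qquad \der_{3} h = - \bm{w} \cdot \bm{a}, \qquad \der_{4} h = \bm{\sigma} \bm{a}.
\end{align*}
Evaluated at $(x, \nabla \bm{u}_{\eps}, p_{\eps}, \nabla \varphi_{\eps})$ these become $\der_{2} h = \mu(\nabla \varphi_{\eps} \otimes \bm{a} + \bm{a} \otimes \nabla \varphi_{\eps})$, $\der_{3} h = - \nabla \varphi_{\eps} \cdot \bm{a}$ and $\der_{4} h = \bm{\sigma}_{\eps} \bm{a}$, while $h = \nabla \varphi_{\eps} \cdot (\bm{\sigma}_{\eps} \bm{a})$. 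Inserting $\der_{4} h$ and this value of $h$ into the general gradient equation \eqref{generalh:PhaseFieldOptSys} yields \eqref{e:PhaseFieldOptSys} at once: the $\der_{4} h$ contribution becomes the $\mathcal{M}(\varphi_{\eps}) \bm{\sigma}_{\eps} \bm{a}$ term paired with $\nabla \zeta$, and the $\mathcal{M}'$ contribution becomes $\mathcal{M}'(\varphi_{\eps}) \nabla \varphi_{\eps} \cdot (\bm{\sigma}_{\eps} \bm{a})$ paired with $\zeta$.

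For the adjoint system I would substitute the same derivatives into \eqref{generalh:adjointsystem}. Using $\der_{3} h = - \nabla \varphi_{\eps} \cdot \bm{a}$ together with the definition \eqref{defn:vartheta} of $\vartheta_{\eps}$, the divergence equation becomes exactly \eqref{adjoint2}. The momentum source $- \div(\mathcal{M}(\varphi_{\eps}) \der_{2} h)$ is the only term requiring work: here I would return to the weak adjoint identity \eqref{adjoint:weakform}, write its source as $\int_{\Omega} \mu \mathcal{M}(\varphi_{\eps}) \nabla \varphi_{\eps} \cdot (\nabla \bm{v} + (\nabla \bm{v})^{T}) \bm{a} \dx$, integrate by parts (the boundary terms vanish since $\bm{v} \in \bm{H}^{1}_{0,\sigma}(\Omega)$), and read off the strong form. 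Unique solvability of the resulting pair $(\bm{q}_{\eps}, \pi_{\eps})$ is then provided by Lemma \ref{l:AdjointSysWellDef}, whose hypotheses (Assumptions \ref{assump:psi}, \ref{assump:alpha}, \ref{assump:regularityh} and the smallness of $\norm{\nabla \bm{u}_{\eps}}_{\bm{L}^{2}(\Omega)}$) are all in force.

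I expect the only delicate point to be this last tensor computation. One must consistently use the convention $(\nabla \bm{v})_{ij} = \pd_{j} v_{i}$ adopted in \eqref{normalderivativevector}, evaluate the two contributions $\div(\mathcal{M}(\varphi_{\eps}) \nabla \varphi_{\eps} \otimes \bm{a}) = \nabla(\mathcal{M}(\varphi_{\eps}) \nabla \varphi_{\eps}) \bm{a}$ and $\div(\mathcal{M}(\varphi_{\eps}) \bm{a} \otimes \nabla \varphi_{\eps}) = \div(\mathcal{M}(\varphi_{\eps}) \nabla \varphi_{\eps}) \bm{a}$ arising from the symmetric gradient, and then track the signs carefully so as to land precisely on the combination $\div(\mathcal{M}(\varphi_{\eps}) \nabla \varphi_{\eps}) \bm{a} - \nabla(\mathcal{M}(\varphi_{\eps}) \nabla \varphi_{\eps}) \bm{a}$ recorded in \eqref{adjoint1}; since $\bm{a}$ is constant, no derivatives of $\bm{a}$ appear, which keeps the bookkeeping manageable. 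All remaining assertions are a direct transcription of Theorem \ref{t:GeneralisationOptimality}.
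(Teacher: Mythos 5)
Your proposal is correct and follows exactly the paper's route: the paper's entire proof consists of computing $\der_{2}h = \mu(\nabla\varphi_{\eps}\otimes\bm{a} + \bm{a}\otimes\nabla\varphi_{\eps})$, $\der_{3}h = -\bm{a}\cdot\nabla\varphi_{\eps}$, $\der_{4}h = \bm{\sigma}_{\eps}\bm{a}$, observing that constancy of $\bm{a}$ gives \eqref{equ:partialderivativesh}, and invoking Theorem \ref{t:GeneralisationOptimality}; your derivatives and your verification of Assumptions \ref{assump:generalh} and \ref{assump:regularityh} agree with this. One caveat on the "delicate tensor computation": your two identities $\div(\mathcal{M}(\varphi_{\eps})\nabla\varphi_{\eps}\otimes\bm{a}) = \nabla(\mathcal{M}(\varphi_{\eps})\nabla\varphi_{\eps})\bm{a}$ and $\div(\mathcal{M}(\varphi_{\eps})\bm{a}\otimes\nabla\varphi_{\eps}) = \div(\mathcal{M}(\varphi_{\eps})\nabla\varphi_{\eps})\bm{a}$ are right, but they necessarily enter $-\div(\mathcal{M}(\varphi_{\eps})\der_{2}h)$ with the \emph{same} sign, yielding $-\mu\left(\div(\mathcal{M}(\varphi_{\eps})\nabla\varphi_{\eps})\bm{a} + \nabla(\mathcal{M}(\varphi_{\eps})\nabla\varphi_{\eps})\bm{a}\right)$ rather than the difference printed in \eqref{adjoint1}; the minus sign there appears to be a typo (the weak form \eqref{adjoint:weakform}, the inner expansion \eqref{adjoint1:innerleadingorder}, and the numerical formulation all correspond to the sum), so do not force your bookkeeping to reproduce the displayed difference.
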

\begin{proof}
Note that for the hydrodynamic force \eqref{HydroDynamForce}:
\begin{align*}
h(x,  \nabla \bm{u}_{\eps}, p_{\eps}, \nabla \varphi_{\eps}) = \nabla \varphi_{\eps} \cdot (\mu(\nabla \bm{u}_{\eps} + (\nabla \bm{u}_{\eps})^{T}) - p_{\eps} \id) \cdot \bm{a},
\end{align*}
and so we compute that
\begin{align*}
 \der_{2}h & = \mu \left ( \nabla \varphi_{\eps} \otimes \bm{a} + \bm{a} \otimes \nabla \varphi_{\eps} \right ), \quad \der_{3}h = -\bm{a} \cdot \nabla \varphi_{\eps}, \\ 
\der_{4}h & = (\mu (\nabla \bm{u}_{\eps} + (\nabla \bm{u}_{\eps})^{T}) - p_{\eps} \id) \bm{a}.
\end{align*}
As $\bm{a}$ is a constant vector, \eqref{equ:partialderivativesh} in Assumption \ref{assump:regularityh} is satisfied and the statements follow from the application of Theorem \ref{t:GeneralisationOptimality}.
\end{proof}

\begin{rem}\label{r:phaseStrong}
After using integration by parts, we find that we can rewrite the gradient equation \eqref{e:PhaseFieldOptSys} for the hydrodynamic force formally in the strong form as 	
\begin{align}\label{e:PhaseFieldGradientEquStrong}
-\frac{\gamma}{2c_{0}} \left ( \eps \Laplace \varphi_{\eps} - \frac{1}{\eps} \psi'(\varphi_{\eps}) \right) + \lambda_{\eps} + \alpha'_{\eps}(\varphi_{\eps}) \left ( \frac{1}{2} \abs{\bm{u}_{\eps}}^{2} - \bm{u}_{\eps} \cdot \bm{q}_{\eps} \right )  - \mathcal{M}(\varphi_{\eps}) \div (\bm{\sigma}_{\eps} \bm{a}) =0 \text{ in }\Omega,
\end{align}
with the boundary condition
\begin{align}\label{e:PhaseFieldGradientEquStrong:Bdy}
\frac{\gamma}{2c_{0}} \eps \nabla \varphi_{\eps} \cdot \bm{\nu}_{\pd \Omega} + \mathcal{M}(\varphi_{\eps}) \bm{\nu}_{\pd \Omega} \cdot (\bm{\sigma}_{\eps} \bm{a}) = 0 \text{ on }\pd \Omega.
\end{align}

Moreover, with sufficiently smooth solutions, we can make use of the state equation \eqref{state1} to rewrite \eqref{e:PhaseFieldGradientEquStrong} as:
\begin{equation}\label{e:PhaseFieldGradientEquStrongRewrite}
\begin{aligned}
& \; -\frac{\gamma}{2c_{0}} \left ( \eps \Laplace \varphi_{\eps} - \frac{1}{\eps} \psi'(\varphi_{\eps}) \right) + \lambda_{\eps} + \alpha'_{\eps}(\varphi_{\eps}) \left ( \frac{1}{2} \abs{\bm{u}_{\eps}}^{2} - \bm{u}_{\eps} \cdot \bm{q}_{\eps} \right ) \\
+ & \; \mathcal{M}(\varphi_{\eps}) \left( \bm{f} - \alpha_{\eps}(\varphi_{\eps}) \bm{u}_{\eps} - (\bm{u}_{\eps} \cdot \nabla ) \bm{u}_{\eps} \right ) \cdot \bm{a} = 0.
\end{aligned}
\end{equation}
\end{rem}

\begin{rem}\label{r:Dirichletbdy}
We note that the above analysis of \eqref{IntroObjFcltPhase}-\eqref{IntroStateEquPhaseWeak} can be modified to include a Dirichlet condition for the design function $\varphi_{\eps}$ on $\pd \Omega$, for instance $\varphi_{\eps} = 1$ on $\pd \Omega$.  This amounts to changing the space of admissible design functions to
\begin{align*}
\Phi_{ad} = \left \{ \varphi \in H^{1}(\Omega) \mid \int_{\Omega} \varphi \dx = \beta \abs{\Omega} \text{ and } \varphi = 1 \text{ on } \pd \Omega \right \}.
\end{align*}
Then, in the optimality conditions \eqref{generalh:PhaseFieldOptSys} and \eqref{e:PhaseFieldOptSys}, and also in \eqref{generalh:ProofOptSysEpsVarIn} and \eqref{generalh:ProofOptSysEpsVarEq}, we use test functions $\zeta \in H^{1}_{0}(\Omega) \cap L^{\infty}(\Omega)$, and $\varphi \in H^{1}_{0}(\Omega)$.   Moreover, from Remark \ref{r:phaseStrong}, the strong form of the resulting gradient equation \eqref{e:PhaseFieldOptSys} remains as \eqref{e:PhaseFieldGradientEquStrong} $($or \eqref{e:PhaseFieldGradientEquStrongRewrite}$)$, but now with the boundary condition 
\begin{align*}
\varphi_{\eps} = 1 \text{ on } \pd \Omega.
\end{align*}
\end{rem}

\section{Sharp interface asymptotics for the hydrodynamic force}\label{sec:SharpInterfaceAsymp}
In Section~\ref{sec:DerivationPhaseField}, we introduced the diffuse interface problem \eqref{IntroObjFcltPhase}-\eqref{IntroStateEquPhaseWeak} as an approximation of the shape optimization problem \eqref{IntroObjFclt}-\eqref{IntroStateEquSharp} for a general functional $h$.  In Section~\ref{sec:AnalysisPhaseField}, the existence of a minimizer $(\varphi_{\eps}, \bm{u}_{\eps}, p_{\eps})$ to \eqref{IntroObjFcltPhase}-\eqref{IntroStateEquPhaseWeak} for every fixed $\eps > 0$ is guaranteed by Theorem \ref{t:PhaseFieldExistMin}, and the first order necessary optimality condition is given in Theorem \ref{t:GeneralisationOptimality}.  The analogous results for the hydrodynamic force problem $\{\eqref{IntroStateEquPhaseWeak}, \eqref{ObjFunctHydroPhase}\}$ are also presented in Theorem \ref{t:HydroDyanExistMin} and Theorem \ref{t:HydroDynamOptSys}.

In this section, we focus only the hydrodynamic force problem $\{\eqref{IntroStateEquPhaseWeak}, \eqref{ObjFunctHydroPhase}\}$ and carry out a sharp interface limit of the system $\{\eqref{IntroStateEquPhase}, \eqref{e:AdjointStrong}, \eqref{e:PhaseFieldGradientEquStrongRewrite} \}$ by the method of formally matched asymptotic expansions.  We hereby recover the optimality conditions expected by classical shape sensitivity analysis presented in Section \ref{sec:SharpInterfaceProblem} in the limit $\eps \searrow 0$.  For an introduction and more detailed discussion of the techniques and basic assumptions used in the method of formally matched asymptotic analysis we refer for instance to \cite{article:FifePenrose95, article:GarckeStinner06}.  

In the asymptotic analysis, we assume there are sufficient smooth solutions to the system $\{\eqref{IntroStateEquPhase}, \eqref{e:AdjointStrong}, \eqref{e:PhaseFieldGradientEquStrong}\}$, and hence we consider \eqref{e:PhaseFieldGradientEquStrongRewrite} instead of \eqref{e:PhaseFieldGradientEquStrong} in the sequel as the analysis is comparatively easier.

\begin{assumption}
We assume that for small $\eps$, the domain $\Omega$ can be divided into two open subdomains $\Omega^{\pm}(\eps)$, separated by an interface $\Gamma(\eps)$.  Furthermore, we assume that there is a family $(\varphi_{\eps}, \bm{u}_{\eps}, p_{\eps}, \bm{q}_{\eps}, \pi_{\eps}, \lambda_{\eps}, \vartheta_{\eps})_{\eps > 0}$ of solutions to $\{\eqref{IntroStateEquPhase}, \eqref{e:AdjointStrong}, \eqref{e:PhaseFieldGradientEquStrongRewrite}\}$, which are sufficiently smooth and have an asymptotic expansion in $\eps$ in the bulk regions away from $\Gamma(\eps)$ (the outer expansion, see Section \ref{sec:OuterExp}), and another expansion in the interfacial region  (inner expansions, see Section \ref{sec:InnerExp}), see also \cite{article:FifePenrose95,article:GarckeStinner06} for a detailed formulation.
\end{assumption}

For the remainder of this section, we will make use of the following assumptions extensively:
\begin{assumption}
The correction constant $\delta_{\eps}$ and the interpolation function $\alpha_{\eps}$ fulfill
\begin{align*}
\delta_{\eps} = \eps^{k}, \, k > 1, \quad \alpha_{\eps}(t) = \frac{1}{\eps} \hat{\alpha}(t),
\end{align*}
where $\hat\alpha\in C^{1,1}(\R)\cap L^\infty(\R)$ satisfies the following properties:
\begin{align}\label{assump:asymptotics:alpha}
\hat{\alpha}(-1)>0,\quad \hat{\alpha}(1) = \hat{\alpha}'(1) = 0, \quad \hat{\alpha}(t) \neq 0 \text{ for } t \neq 1.
\end{align}
Moreover, we assume that the potential $\psi \in C^{2}(\R)$ satisfies:
\begin{align}\label{assump:asymptotics:psi}
\psi(\pm 1) = \psi'(\pm 1) = 0.
\end{align}
\end{assumption}

For the terms involving the square root, we make use of the following expansion for $a = a_{0} + \eps a_{1} + \eps^{2} a_{2} + \ldots$, which holds due to Taylor's theorem:
\begin{equation}\label{taylorexpansion}
\begin{aligned}
\sqrt{a + \delta_{\eps}} &= \sqrt{a_{0} +\eps a_{1} + \ldots + \eps^{k}(a_{k} + 1 ) + \ldots} \\
& = \sqrt{a_{0}} + \frac{1}{2 \sqrt{a_{0}}} \left [ \eps a_{1} + \ldots + \eps^{k}(a_{k} + 1) + \ldots \right ] \\
& - \frac{1}{4 \sqrt{a_{0}^{3}}} \left [ \eps a_{1} + \ldots + \eps^{k} (a_{k} + 1) + \ldots \right ]^{2} + \ldots.
\end{aligned}
\end{equation}

\subsection{Outer expansions}\label{sec:OuterExp}
We assume that for $v_{\eps} \in \{\varphi_{\eps}, \bm{u}_{\eps}, p_{\eps}, \lambda_{\eps},  \vartheta_{\eps}, \bm{q}_{\eps}, \pi_{\eps} \}$, the following outer expansions hold:
\begin{align*}
v_{\eps} = v_{0} + \eps v_{1} + \dots.
\end{align*}
Applying Taylor's theorem and \eqref{taylorexpansion}, for the choice $\mathcal{M}(\varphi_{\eps}) = \frac{1}{\sqrt{2} c_{0}} \sqrt{\psi(\varphi_{\eps}) + \delta_{\eps}}$, we obtain following outer expansion
\begin{equation}\label{sqrtPsiOuter} 
\begin{aligned}
& \; \mathcal{M}(\varphi_{\eps}) = \mathcal{M}(\varphi_{0} + \eps \varphi_{1} + \dots) \\
= & \; \frac{1}{\sqrt{2} c_{0}} \left ( \sqrt{ \psi(\varphi_{0}) + \psi'(\varphi_{0}) (\eps \varphi_{1} + l\dots) + \ldots + \psi^{(k)}(\varphi_{0})(\eps \varphi_{1} + \dots)^{k} + \ldots } \right ) \\
= & \; \frac{1}{ \sqrt{2} c_{0}} \left ( \sqrt{\psi(\varphi_{0})} + \frac{\eps \psi'(\varphi_{0}) \varphi_{1} }{2 \sqrt{\psi(\varphi_{0})}}+ \mathcal{O}(\eps^{2}) \right ) =: \mathcal{M}_{0}(\varphi_{0}) + \eps \mathcal{M}_{1}(\varphi_{0}) \varphi_{1} + \text{ h.o.t.}.
\end{aligned}
\end{equation}
We remark that, for the classical smooth double-well potential $\psi(\varphi) = \frac{1}{4}(1-\varphi^{2})^{2}$, one can compute that
\begin{align*}
\lim_{s \searrow -1} \frac{\psi'(s)}{\sqrt{\psi(s)}} = 2, \quad \lim_{s \nearrow 1} \frac{\psi'(s)}{\sqrt{\psi(s)}} = -2,
\end{align*} 
and so $\mathcal{M}_{1}(\pm 1)$ is well-defined for the smooth double-well potential.

We denote $(\cdot)_{O}^{\beta}$ to be the order $\beta$ outer expansions of equation $(\cdot)$. 

To leading order $(\ref{state1})_{O}^{-1}$ gives
\begin{align}\label{state1:leadingorder}
\hat{\alpha}(\varphi_{0}) \bm{u}_{0} = \bm{0}.
\end{align}
By (\ref{assump:asymptotics:alpha}), if $\varphi_{0} \neq 1$, we then obtain $\bm{u}_{0} = \bm{0}$.  
Similarly, to leading order $(\ref{adjoint1})_{O}^{-1}$ gives
\begin{align}\label{adjoint1:leadingorder}
\hat{\alpha}(\varphi_{0}) \bm{q}_{0} = \hat{\alpha}(\varphi_{0}) \bm{u}_{0}.
\end{align}
Thus, if $\varphi_{0} \neq +1$, then $\bm{q}_{0} = \bm{u}_{0} = \bm{0}$.

Meanwhile, $(\ref{state2})_{O}^{0}$, $(\ref{state3})_{O}^{0}$, and $(\ref{adjoint3})_{O}^{0}$ give
\begin{equation*}
\begin{aligned}
\div \bm{u}_{0}  = 0 &\text{ in } \Omega, \\
\bm{u}_{0} = \bm{g}, \quad \bm{q}_{0} = \bm{0} &\text{ on } \pd \Omega.
\end{aligned}
\end{equation*}

To order $-1$, $(\ref{e:PhaseFieldGradientEquStrongRewrite})_{O}^{-1}$ gives
\begin{align}\label{phase:leadingorder}
\hat{\alpha}'(\varphi_{0}) \left ( \frac{1}{2} \abs{\bm{u}_{0}}^{2} - \bm{u}_{0} \cdot \bm{q}_{0} \right ) = -\frac{\gamma}{2c_{0}} \psi'(\varphi_{0}).
\end{align}
If $\varphi_{0} \neq 1$, then from (\ref{state1:leadingorder}), (\ref{adjoint1:leadingorder}), and (\ref{assump:asymptotics:alpha}), we have that
\begin{align}\label{psiroots}
-\psi'(\varphi_{0}) = 0.
\end{align}
Hence, $\varphi_{0}$ must be a piecewise constant function that takes values equal to the roots of $\psi'(\cdot)$.  The stable solutions to (\ref{psiroots}) are $\varphi_{0} = \pm 1$.  In particular, we can define the fluid region and the solid region by
\begin{align*}
E := \{x \in \Omega \mid \varphi_{0}(x) = 1 \},\quad B := \{x \in \Omega \mid \varphi_{0}(x)= -1 \},
\end{align*}
respectively.  Moreover, from (\ref{state1:leadingorder}) and (\ref{adjoint1:leadingorder}) we have
\begin{align}\label{s:OuterExpU0ZeroInOmegas}\bm{u}_{0} = \bm{q}_{0} = \bm{0} \text{ in } B.
\end{align}
Furthermore, as $\varphi_{0} = \pm 1$, we have $\nabla \varphi_{0} = \bm{0}$ in $E$ and $B$, and so, from the definition \eqref{defn:vartheta} that $\vartheta_{0} = 0$.  From $(\ref{adjoint2})_{O}^{0}$  we have
\begin{align}\label{adjoint2:leadingorder}
\div \bm{q}_{0} = 0 &\text{ in } E \cup B.
\end{align}

The next order $(\ref{state1})_{O}^{0}$ gives
\begin{align}\label{state1:firstorder}
\hat{\alpha}'(\varphi_{0}) \varphi_{1} \bm{u}_{0} + \hat{\alpha}(\varphi_{0}) \bm{u}_{1} - \mu \Laplace \bm{u}_{0} + (\bm{u}_{0} \cdot \nabla) \bm{u}_{0} + \nabla p_{0} = \bm{f}.
\end{align}
By (\ref{assump:asymptotics:alpha}), for $\varphi_{0} = 1$, we obtain
\begin{align}\label{NSbulk}
- \mu \Laplace \bm{u}_{0} + (\bm{u}_{0} \cdot \nabla) \bm{u}_{0} + \nabla p_{0} = \bm{f} \text{ in } E.
\end{align}
Similarly, $(\ref{adjoint1})_{O}^{0}$ gives
\begin{align}\label{adjoint1:firstorder}
\notag & \; \hat{\alpha}'(\varphi_{0}) \varphi_{1} (\bm{q}_{0} - \bm{u}_{0}) + \hat{\alpha}(\varphi_{0}) (\bm{q}_{1} - \bm{u}_{1}) - \mu \div ( \nabla \bm{q}_{0} + (\nabla \bm{q}_{0})^{T}) \\
+ & \; (\nabla \bm{u}_{0})^{T} \bm{q}_{0} - (\bm{u}_{0} \cdot \nabla) \bm{q}_{0} + \nabla \pi_{0} = \bm{0}.
\end{align}
For $\varphi_{0} = 1$, we obtain
\begin{align*}
- \mu \Laplace \bm{q}_{0} + (\nabla \bm{u}_{0})^{T} \bm{q}_{0} - (\bm{u}_{0} \cdot \nabla) \bm{q}_{0} + \nabla \pi_{0} = \bm{0} \text{ in } E,
\end{align*}
where we have used (\ref{adjoint2:leadingorder}) to simplify the divergence term.

\subsection{Inner expansions and matching conditions}\label{sec:InnerExp}
Now we consider the interfacial region, i.e. near some free boundary $\Gamma= \pd E \cap\pd B$ which is assumed to be the limiting hypersurface of the zero level sets of $\varphi_{\eps}$. For studying the limiting behaviour in these parts of $\Omega$ we introduce new coordinates.  For this purpose we introduce the signed distance function $d(x)$ to $\Gamma$ and set $z = \frac{d}{\eps}$ as the rescaled distance variable.  Here we use the sign convention $d(x)>0$ if $x \in E$.

Let $\gamma(s)$ denote a parametrization of $\Gamma$ by arc-length $s$, and let $\bm{\nu}$ denote the outward unit normal of $\Gamma$.  Then, in a tubular neighbourhood of $\Gamma$, for sufficiently smooth function $v(x)$, we have
\begin{align*}
v(x) = v(\gamma(s) + \eps z \bm{\nu}(\gamma(s))) =: V(s,z).
\end{align*}
In this new $(s,z)$-coordinate system, the following change of variables apply, see \cite{article:GarckeStinner06}:
\begin{align*}
\nabla_{x} v = \frac{1}{\eps}\pd_{z} V \bm{\nu} + \nabla_{\Gamma} V + \text{ h.o.t.},
\end{align*}
where $\nabla_{\Gamma}f$ denotes the surface gradient of $f$ on $\Gamma$ with components $(\underline{D}_{k}f)_{1 \leq k \leq d}$ and h.o.t. denotes higher order terms with respect to $\eps$.  Moreover, if $\bm{v}$ is a vector-valued function, then we obtain
\begin{align*}
\div_{x} \bm{v} = \frac{1}{\eps}\pd_{z} \bm{V} \cdot \bm{\nu} + \div_{\Gamma} \bm{V} + \text{ h.o.t.}.
\end{align*}
In particular, using the fact that the normal $\bm{\nu}$ is independent of $z$, we have
\begin{align*}
\Laplace v  = \div_{x} (\nabla_{x} v) & = \frac{1}{\eps^{2}} \pd_{zz}V + \frac{1}{\eps}\underbrace{\div_{\Gamma} (\pd_{z}V \bm{\nu})}_{= - \kappa \pd_{z}V} + \text{ h.o.t.},
\end{align*}
where $\kappa = - \div_{\Gamma} \bm{\nu}$ is the mean curvature.

We denote the variables  $\varphi_{\eps}$, $\bm{u}_{\eps}$, $p_{\eps}$, $\bm{q}_{\eps}$, $\pi_{\eps}$ in the new coordinate system by $\Phi_{\eps}$, $\bm{U}_{\eps}$, $P_{\eps}$, $\bm{Q}_{\eps}$, $\Pi_{\eps}$.  We further assume that they have the following inner expansions:
\begin{align*}
V_{\eps}(s,z) = V_{0}(s,z) + \eps V_{1}(s,z) + \ldots,
\end{align*}
for $V_{\eps} \in \{ \Phi_{\eps}, \bm{U}_{\eps}, P_{\eps}, \bm{Q}_{\eps}, \Pi_{\eps} \}$.  We then obtain, 
\begin{align*}
\mathcal{M}(\Phi_{\eps}) = \mathcal{M}_{0}(\Phi_{0}) + \eps \mathcal{M}_{1}(\Phi_{0}) \Phi_{1} + \text{ h.o.t.},
\end{align*}
where $\mathcal{M}_{0}$, $\mathcal{M}_{1}$ are as defined in \eqref{sqrtPsiOuter} if we consider $\mathcal{M}(\varphi) = \frac{1}{c_{0}} \sqrt{\frac{\psi(\varphi) + \delta_{\eps}}{2}}$.

We remark that, for a sufficiently smooth function $\bm{f}$ independent of $\eps$, 
\begin{align*}
\bm{f}(x) & = \bm{f}(\gamma(s) + \eps z \bm{\nu}(s)) = \bm{f}(\gamma(s)) + \eps z \nabla \bm{f}(\gamma(s)) \cdot \bm{\nu} + \text{ h.o.t. } \\
& =: \bm{F}_{0}(s) + \eps \bm{F}_{1}(s,z) + \text{ h.o.t.},
\end{align*}
for $x$ in a neighbourhood of $\Gamma$.  As a consequence, we see that
\begin{align}\label{pdzF0}
\pd_{z} \bm{F}_{0} = \bm{0}.
\end{align}
As the Lagrange multipliers $\lambda_{\eps}$ and $\vartheta_{\eps}$ are constant, we assume that the inner expansions are the same as the outer expansions.  In particular, the leading order expansions of the Lagrange multipliers do not depend on $z$.

The assumption that the zero level set of $\varphi_{\eps}$ converge to $\Gamma$ implies that
\begin{align}\label{e:CondPhi}
\Phi_{0}(0) = 0.
\end{align}
In order to match the inner expansions valid in the interfacial region to the outer expansions of Section \ref{sec:OuterExp} we employ the matching conditions, (for the derivation we refer to \cite[Appendix D]{article:GarckeStinner06}):
\begin{align}
\label{e:MatchingCond1}
\lim_{z \to \pm \infty} V_{0}(s,z) &= v_{0}^{\pm}, \\
\label{e:MatchingCond2}
\lim_{z \to \pm\infty} \pd_{z}V_{0}(s,z) &= 0 ,\\
\label{e:MatchingCond3}
\lim_{z \to \pm \infty} \pd_{z} V_{1}(s,z) &= \nabla v_{0}^{\pm} \cdot \bm{\nu}, \\
\label{e:MatchingCond4}
\lim_{z \to \pm \infty} \pd_{zz} V_{2}(s,z) &= \left(\left(\bm{\nu} \cdot \nabla \right)\left(\bm{\nu} \cdot \nabla \right)u_{0}^{\pm} \right) = \pd_{\bm{\nu}}(\pd_{\bm{\nu}} u_{0}^{\pm}) ,
\end{align}
where $v_{0}^{\pm}:= \lim_{\delta \searrow 0} v_{0}(p \pm \delta \bm{\nu})$ for $p \in \Gamma$.  Then (\ref{e:MatchingCond3}) and (\ref{e:MatchingCond4}) for vector-valued functions read as
\begin{align*}
\lim_{z \to \pm \infty} \pd_{z} \bm{V}_{1}(s,z) = \pd_{\bm{\nu}} \bm{v}_{0}^{\pm}, \quad \lim_{z \to \pm \infty} \pd_{zz} \bm{V}_{2}(s,z) = \bm{\nu} \cdot \nabla (\pd_{\bm{\nu}} \bm{v}_{0}^{\pm}) = \pd_{\bm{\nu}} (\pd_{\bm{\nu}} \bm{v}_{0}^{\pm}).
\end{align*}
As $\div \bm{u}_{\eps} = 0$, we can rewrite
\begin{align*}
\Laplace \bm{u}_{\eps} = \div ( \nabla \bm{u}_{\eps} + (\nabla \bm{u}_{\eps})^{T}).
\end{align*}
For a tensor $\bm{A}$, let $\mathcal{E}(\bm{A}) = \frac{1}{2}(\bm{A} + \bm{A}^{T})$.  Then we can compute
\begin{align*}
\Laplace \bm{u}_{\eps} & = \frac{2}{\eps^{2}} \pd_{z} ( \mathcal{E}(\pd_{z} \bm{U}_{\eps} \otimes \bm{\nu}) \bm{\nu}) + \frac{2}{\eps} \pd_{z}(\mathcal{E}(\surf \bm{U}_{\eps}) \bm{\nu}) + \frac{2}{\eps} \div_\Gamma(\mathcal{E}(\pd_{z} \bm{U}_{\eps} \otimes \bm{\nu}) + \ldots \\
& = \frac{1}{\eps^{2}} \pd_{zz} \bm{U}_{\eps} + \frac{1}{\eps^{2}}\pd_{z}(\pd_{z} \bm{U}_{\eps} \cdot \bm{\nu}) \bm{\nu} + \frac{2}{\eps}  \pd_{z}(\mathcal{E}(\surf \bm{U}_{\eps}) \bm{\nu}) + \frac{2}{\eps} \div_\Gamma(\mathcal{E}(\pd_{z} \bm{U}_{\eps} \otimes \bm{\nu})  + \ldots.
\end{align*}
We note that the same expansion holds for the divergence term in (\ref{adjoint1}).  

Similarly as in Section \ref{sec:OuterExp}, we will denote $(\cdot)_{I}^{\beta}$ to be the order $\beta$ inner expansions of equation $(\cdot)$.

\subsubsection{Inner expansions of the state equations}
To order $-1$, $(\ref{state2})_{I}^{-1}$ gives
\begin{align}\label{state2:innerleadingorder}
\pd_{z}\bm{U}_{0} \cdot \bm{\nu} = \pd_{z}(\bm{U}_{0} \cdot \bm{\nu}) = 0,
\end{align}
while to leading order $(\ref{state1})_{I}^{-2}$ gives
\begin{align}
\label{state1:innerleadingorder}
-\mu \pd_{z} ( \pd_{z} \bm{U}_{0} + (\pd_{z} \bm{U}_{0} \cdot \bm{\nu}) \bm{\nu}) = -\mu \pd_{zz} \bm{U}_{0} = \bm{0},
\end{align}
where we have used (\ref{state2:innerleadingorder}).  Integrating with respect to $z$ from $-\infty$ to $z$ and applying the matching condition \eqref{e:MatchingCond2} leads to
\begin{align}
\label{pdzU0zero}
\pd_{z} \bm{U}_{0}(s,z) = \bm{0},
\end{align}
and so $\bm{U}_{0}$ is independent of $z$. Integrating once more with respect to $z$ from $-\infty$ to $z$ and by the matching condition $\eqref{e:MatchingCond1}$, we hence find that
\begin{align}
\label{U0zero}
\bm{U}_{0}(s,z) \equiv \bm{u}_{0}^{-} = \bm{0},
\end{align}
where we made in particular use of $\eqref{s:OuterExpU0ZeroInOmegas}$. This implies
\begin{align}\label{velocontinuous}
\bm u_{0}^{+} = \bm{u}_{0}^{-} = \bm{0}.
\end{align}

To first order $(\ref{state2})_{I}^{0}$ gives
\begin{align}
\label{state2:innerfirstorder}
\pd_{z}\bm{U}_{1} \cdot \bm{\nu} + \div_\Gamma\bm{U}_{0} = \pd_{z} \bm{U}_{1} \cdot \bm{\nu} = 0,
\end{align}
where we have used (\ref{U0zero}). Using (\ref{U0zero}) and (\ref{state2:innerfirstorder}), to first order $(\ref{state1})_{I}^{-1}$ gives
\begin{align}
\label{state1:innerfirstorder:simple}
- \mu \pd_{zz} \bm{U}_{1} + \pd_{z} P_{0} \bm{\nu} = \bm{0}.
\end{align}


\subsubsection{Phase field equation to leading order}
To leading order $(\ref{e:PhaseFieldGradientEquStrongRewrite})_{I}^{-1}$ gives
\begin{equation}\label{phase:innerleadingorder}
\begin{aligned}
 -\frac{\gamma}{2c_{0}} (\pd_{zz}\Phi_{0} -  \psi'(\Phi_{0})) + \hat{\alpha}'(\Phi_{0})(\tfrac{1}{2} \abs{\bm{U}_{0}}^{2} - \bm{U}_{0} \cdot \bm{Q}_{0}) - \mathcal{M}(\Phi_{0}) \alpha(\Phi_{0}) \bm{U}_{0} \cdot \bm{a} = 0
\end{aligned}
\end{equation}
Using (\ref{U0zero}), the above simplifies to
\begin{align}
\label{ODE}
\pd_{zz}\Phi_{0} - \psi'(\Phi_{0}) = 0.
\end{align}
Along with the matching conditions $\eqref{e:MatchingCond1}$ for $\Phi_{0}$:
\begin{align*}
\Phi_{0}(s, z = \pm \infty) = \pm 1,
\end{align*}
we can choose $\Phi_{0}$ to be independent of $s$ and as the unique monotone solution to (\ref{ODE}) satisfying $\Phi_{0}(z=0) = 0$ (recall $\eqref{e:CondPhi}$).  Moreover, taking the product of (\ref{ODE}) with $\Phi_{0}'(z)$ and integrating with respect to $z$ from $-\infty$ to $z$ leads to the so-called equipartition of energy after matching:
\begin{align}
\label{equipartition}
\frac{1}{2} \abs{\Phi_{0}'(z)}^{2} = \psi(\Phi_{0}(z)) \text{ for } \abs{z} < \infty.
\end{align}
Moreover, a short calculation using \eqref{equipartition}, the monotonicity of $\Phi_{0}$, and a change of variables $s \mapsto \Phi_{0}(z)$ shows that
\begin{align}
\label{const:c0}
c_{0} = \frac{1}{2} \int_{-1}^{1} \sqrt{2 \psi(s)} \ds = \frac{1}{2} \int_{\R} \sqrt{2\psi(\Phi_{0}(z))} \Phi_{0}'(z) \dz = \frac{1}{2} \int_{\R} \abs{\Phi_{0}'(z)}^{2} \dz. 
\end{align}

\subsubsection{Inner expansions of the adjoint equation}
Before we analyse the adjoint equation, we first compute:
\begin{equation}\label{adjoint:RHSterm1:expansion}
\begin{aligned}
\div (\mathcal{M}(\varphi_{\eps}) \nabla \varphi_{\eps}) & = \frac{1}{\eps^{2}} \pd_{z}(\mathcal{M}(\Phi_{\eps}) \pd_{z} \Phi_{\eps}) + \div_\Gamma \left ( \mathcal{M}(\Phi_{\eps}) \left ( \frac{1}{\eps} \pd_{z}\Phi_{\eps} \bm{\nu} + \surf \Phi_{\eps} \right ) \right) \\
& + \text{ h.o.t.},
\end{aligned}
\end{equation}
and for any $1 \leq j \leq d$,
\begin{align*}
& \; (\nabla (\mathcal{M}(\varphi_{\eps}) \nabla \varphi_{\eps}) \bm{a})_{j} = \sum_{i=1}^{d} \pd_{i} (\mathcal{M}(\varphi_{\eps}) \pd_{j} \varphi_{\eps}) a_{i} \\
= & \; \sum_{i=1}^{d} \frac{1}{\eps} \nu_{i} \pd_{z} \left ( \mathcal{M}(\Phi_{\eps}) \left ( \frac{1}{\eps} \pd_{z} \Phi_{\eps} \nu_{j} + \underline{D}_{j} \Phi_{\eps} \right ) \right )a_{i} + \underline{D}_{i} \left ( \mathcal{M}(\Phi_{\eps}) \left ( \frac{1}{\eps} \pd_{z}\Phi_{\eps} \nu_{j} + \underline{D}_{j} \Phi_{\eps} \right ) \right ) a_{i} + \text{ h.o.t.},
\end{align*}
so that
\begin{equation}\label{adjoint:RHSterm2:expansion}
\begin{aligned}
\nabla (\mathcal{M}(\varphi_{\eps}) \nabla \varphi_{\eps}) \bm{a} & = \frac{1}{\eps^{2}} (\bm{\nu} \cdot \bm{a}) \bm{\nu} \pd_{z} (\mathcal{M}(\Phi_{\eps}) \pd_{z} \Phi_{\eps})\\
& + \frac{1}{\eps} \left ( (\bm{\nu} \cdot \bm{a})  \pd_{z} (\mathcal{M}(\Phi_{\eps}) \surf \Phi_{\eps}) + \surf (\mathcal{M}(\Phi_{\eps}) \pd_{z} \Phi_{\eps} \bm{\nu}) \bm{a} \right )  \\
& + \surf (\surf \Phi_{\eps}) \bm{a} + \text{ h.o.t.}.
\end{aligned}
\end{equation}

To leading order $(\ref{adjoint2})_{I}^{-1}$ gives
\begin{align}
\label{adjoint2:innerleadingorder}
\pd_{z} \bm{Q}_{0} \cdot \bm{\nu} = \mathcal{M}_{0}(\Phi_{0}) \Phi_{0}' (\bm{\nu} \cdot \bm{a}),
\end{align}
while to leading order $(\ref{adjoint1})_{I}^{-2}$ gives
\begin{align}
\label{adjoint1:innerleadingorder}
- \mu \pd_{zz} \bm{Q}_{0} - \mu \pd_{z}(\pd_{z} \bm{Q}_{0} \cdot \bm{\nu}) \bm{\nu} = - \mu  \pd_{z} (\mathcal{M}_{0}(\Phi_{0}) \Phi_{0}'  ) ((\bm{\nu} \cdot \bm{a}) \bm{\nu} + \bm{a}),
\end{align}
where we have used \eqref{adjoint:RHSterm1:expansion}, \eqref{adjoint:RHSterm2:expansion} and that $\bm{\nu}$ is independent of $z$ to simplify the right hand side of $(\ref{adjoint1})_{I}^{-2}$.

Integrating \eqref{adjoint1:innerleadingorder} with respect to $z$ from $-\infty$ to $z$ and using the matching condition \eqref{e:MatchingCond2} leads to
\begin{align*}
\pd_{z} \bm{Q}_{0} + (\pd_{z}\bm{Q}_{0} \cdot \bm{\nu}) \bm{\nu} = \mathcal{M}_{0}(\Phi_{0}) \Phi_{0}' ((\bm{\nu} \cdot \bm{a}) \bm{\nu} + \bm{a}),
\end{align*}
and upon adding the product of  \eqref{adjoint2:innerleadingorder} with $\bm{\nu}$ leads to
\begin{align}
\label{pdzQ0}
\pd_{z} \bm{Q}_{0}(s,z) = \mathcal{M}_{0}(\Phi_{0}) \Phi_{0}' \bm{a}.
\end{align}

Integrating \eqref{pdzQ0} with respect to $z$ from $-\infty$ to $z$, using the matching condition \eqref{e:MatchingCond1} and $\bm{q}_{0}^{-} = \bm{0}$ (see \eqref{s:OuterExpU0ZeroInOmegas}), lead to
\begin{align}\label{Q0indeps}
\bm{Q}_{0}(s,z) = \left ( \int_{-\infty}^{z} \mathcal{M}_{0}(\Phi_{0}(z)) \Phi_{0}'(z) \dz \right ) \bm{a}.
\end{align}
In particular, the right hand side is independent of $s$, and so we can deduce that $\bm{Q}_{0}$ is also independent of $s$. Using the matching condition $\eqref{e:MatchingCond1}$, we hence have
\begin{align}
\label{q0Omegaplus}
\bm{q}_{0}^{+} =  \left ( \int_{\R} \mathcal{M}_{0}(\Phi_{0}(z)) \Phi_{0}'(z) \dz \right ) \bm{a}.
\end{align}

For the choice $\mathcal{M}(\varphi) = \frac{1}{2}$, we see that
\begin{align}
\int_{\R} \mathcal{M}_{0}(\Phi_{0}(z)) \Phi_{0}'(z) \dz = \frac{1}{2} \int_{\R} \Phi_{0}'(z) \dz = 1,
\end{align}
while for the choice $\mathcal{M}(\varphi) = \frac{1}{c_{0}} \sqrt{\frac{\psi(\varphi) + \delta_{\eps})}{2}}$, we see that by \eqref{sqrtPsiOuter}, \eqref{equipartition}, and \eqref{const:c0},
\begin{align*}
\int_{\R} \mathcal{M}_{0}(\Phi_{0}(z)) \Phi_{0}'(z) \dz = \frac{1}{c_{0}} \int_{\R} \frac{1}{\sqrt{2}} \sqrt{\psi(\Phi_{0}(z))} \Phi_{0}'(z) \dz = \frac{1}{c_{0}} \int_{\R} \frac{1}{2} \abs{\Phi_{0}'(z)}^{2} \dz = 1.
\end{align*}
Thus, in both cases, we obtain
\begin{align}\label{q0Omegaplus:complete}
\bm{q}_{0}^{+} = \bm{a}.
\end{align}

To the next order, we obtain from $(\ref{adjoint2})_{I}^{0}$
\begin{align}
\label{adjoint2:innerfirstorder}
\pd_{z} \bm{Q}_{1} \cdot \bm{\nu} =  \mathcal{M}_{0}(\Phi_{0}) \pd_{z} \Phi_{1} (\bm{\nu} \cdot \bm{a}) + \mathcal{M}_{1}(\Phi_{0}) \Phi_{1} \Phi_{0}' (\bm{\nu} \cdot \bm{a}),
\end{align}
where we used that $\bm{Q}_{0}$ and $\Phi_{0}$ are functions of $z$ only, and $\vartheta_{0} = 0$ from the outer expansions.  Meanwhile, from \eqref{adjoint:RHSterm1:expansion} and \eqref{adjoint:RHSterm2:expansion}, $(\ref{adjoint1})_{I}^{-1}$ gives
\begin{equation}\label{adjoint1:innerfirstorder}
\begin{aligned}
& \; \hat{\alpha}(\Phi_{0}) \bm{Q}_{0} - \mu \pd_{zz} \bm{Q}_{1} - \mu \pd_{z}(\pd_{z} \bm{Q}_{1} \cdot \bm{\nu}) \bm{\nu} - 2 \mu \pd_{z}(\mathcal{E}(\surf \bm{Q}_{0})) \bm{\nu} - 2 \mu \div_\Gamma( \mathcal{E}( \bm{Q}_{0}' \otimes \bm{\nu})) \\
= & \; -\mu (\bm{a} + (\bm{\nu} \cdot \bm{a}) \bm{\nu}) \pd_{z}(\mathcal{M}_{0}(\Phi_{0}) \pd_{z}\Phi_{1} + \mathcal{M}_{1}(\Phi_{0})\Phi_{1} \Phi_{0}') \\
- & \; \mu \div_{\Gamma} (\mathcal{M}_{0}(\Phi_{0}) \Phi_{0}' \bm{\nu}) \bm{a} - \mu \surf (\mathcal{M}_{0}(\Phi_{0}) \Phi_{0}' \bm{\nu}) \bm{a}.
\end{aligned}
\end{equation}
Moreover, we can simplify, thanks to fact that $\Phi_{0}$ and $\bm{Q}_{0}$ only depend on $z$:
\begin{align*}
2\div_\Gamma(\mathcal{E}( \bm{Q}_{0}' \otimes \bm{\nu})) & = \surf ( \bm{Q}_{0}' ) \bm{\nu} + (\div_{\Gamma} \bm{\nu}) \bm{Q}_{0}' + (\surf \bm{\nu}) \bm{Q}_{0}' + (\div_{\Gamma} \bm{Q}_{0}') \bm{\nu} \\
& =  -\kappa \bm{Q}_{0}' + (\surf \bm{\nu}) \bm{Q}_{0}', \\
\div_{\Gamma} (\mathcal{M}_{0}(\Phi_{0}) \Phi_{0}' \bm{\nu}) & = -\mathcal{M}_{0}(\Phi_{0}) \Phi_{0}' \kappa, \\
\surf(\mathcal{M}_{0}(\Phi_{0}) \Phi_{0}' \bm{\nu}) & = \mathcal{M}_{0}(\Phi_{0}) \Phi_{0}' \surf \bm{\nu}.
\end{align*}

Then, using the relation \eqref{pdzQ0}, we obtain from  \eqref{adjoint1:innerfirstorder}:
\begin{equation}\label{adjoint1:innerfirstorder:sim}
\begin{aligned}
& \; \hat{\alpha}(\Phi_{0}) \bm{Q}_{0} - \mu \pd_{zz} \bm{Q}_{1} - \mu \pd_{z}(\pd_{z} \bm{Q}_{1} \cdot \bm{\nu}) \bm{\nu} + \mu \kappa  \bm{Q}_{0}' - \mu(\surf \bm{\nu}) \bm{Q}_{0}'  \\
= & \;  -\mu (\bm{a} + (\bm{\nu} \cdot \bm{a}) \bm{\nu}) \pd_{z}(\mathcal{M}_{0}(\Phi_{0}) \pd_{z}\Phi_{1} + \mathcal{M}_{1}(\Phi_{0})\Phi_{1} \Phi_{0}') + \mu \bm{Q}_{0}' \kappa - \mu (\surf \bm{\nu}) \bm{Q}_{0}',
\end{aligned}
\end{equation}
and thus, upon cancelling the common terms, we have
\begin{equation}\label{adjoint1:innerfirstorder:sim:2}
\begin{aligned}
& \; \hat{\alpha}(\Phi_{0}) \bm{Q}_{0} - \mu \pd_{zz} \bm{Q}_{1} - \mu \pd_{z}(\pd_{z} \bm{Q}_{1} \cdot \bm{\nu}) \bm{\nu}  \\
= & \; -\mu (\bm{a} + (\bm{\nu} \cdot \bm{a}) \bm{\nu}) \pd_{z}(\mathcal{M}_{0}(\Phi_{0}) \pd_{z}\Phi_{1} + \mathcal{M}_{1}(\Phi_{0})\Phi_{1} \Phi_{0}').
\end{aligned}
\end{equation}

\subsubsection{Phase field equation to first order}
Using (\ref{U0zero}), we obtain from $(\ref{e:PhaseFieldGradientEquStrongRewrite})_{I}^{0}$ to first order:
\begin{equation}\label{phase:innerfirstorder}
\begin{aligned}
&  \frac{\gamma}{2c_{0}} \left ( -\pd_{zz} \Phi_{1} + \kappa  \Phi_{0}' + \psi''(\Phi_{0}) \Phi_{1} \right ) + \lambda_{0}  \\
& - \hat{\alpha}'(\Phi_{0}) \bm{U}_{1} \cdot \bm{Q}_{0} + \mathcal{M}_{0}(\Phi_{0}) \left ( \bm{F}_{0} -\hat{\alpha}(\Phi_{0}) \bm{U}_{1} \right ) \cdot \bm{a} = 0.
\end{aligned}
\end{equation}

Making use of (\ref{pdzQ0}), after taking the product of (\ref{phase:innerfirstorder}) with $\Phi_{0}'$ we have
\begin{equation}\label{phase:innerfirstorder:product}
\begin{aligned}
& \frac{\gamma}{2c_{0}} \left ( -\pd_{zz} \Phi_{1} \Phi_{0}' + \Phi_{1} (\psi'(\Phi_{0}))' + \kappa  \abs{\Phi_{0}'}^{2} \right ) + \lambda_{0} \Phi_{0}' \\
& - (\hat{\alpha}(\Phi_{0}))' \bm{U}_{1} \cdot \bm{Q}_{0} +  \bm{Q}_{0}' \cdot (\bm{F}_{0} - \hat{\alpha}(\Phi_{0}) \bm{U}_{1}) = 0.
\end{aligned}
\end{equation}

We note that by integrating by parts:
\begin{align*}
-\int_{\R} (\hat{\alpha}(\Phi_{0}))' \bm{U}_{1} \cdot \bm{Q}_{0} \dz = \int_{\R} \hat{\alpha}(\Phi_{0}) (\bm{U}_{1} \cdot \bm{Q}_{0}' + \pd_{z} \bm{U}_{1} \cdot \bm{Q}_{0}) \dz - [\hat{\alpha}(\Phi_{0}) \bm{U}_{1} \cdot \bm{Q}_{0}]_{z=-\infty}^{z=+\infty} .
\end{align*}
We use that $\hat{\alpha}(1) = 0$, $\bm{Q}_{0}(z = -\infty) = \bm{q}_{0}^{-} = \bm{0}$ to deduce that the jump term is zero.  Hence,
\begin{align}
\label{phase:innerfirstorder:integraltermU1Q0}
-\int_{\R} (\hat{\alpha}(\Phi_{0}))' \bm{U}_{1} \cdot \bm{Q}_{0} \dz = \int_{\R} \hat{\alpha}(\Phi_{0}) (\bm{U}_{1} \cdot \bm{Q}_{0}' + \pd_{z} \bm{U}_{1} \cdot \bm{Q}_{0}) \dz.
\end{align}

So, from integrating (\ref{phase:innerfirstorder:product}) over $\R$ and using (\ref{phase:innerfirstorder:integraltermU1Q0}) we obtain
\begin{equation}\label{phase:innerfirstorder:integrated}
\begin{aligned}
&\int_{\R} \frac{\gamma}{2c_{0}} \left  ( - \pd_{zz} \Phi_{1} \Phi_{0}' +  \Phi_{1} (\psi'(\Phi_{0}))' + \kappa \abs{\Phi_{0}'}^{2} \right ) + \lambda_{0} \Phi_{0}' \dz \\
& + \int_{\R} \hat{\alpha}(\Phi_{0}) \pd_{z} \bm{U}_{1} \cdot \bm{Q}_{0} + \bm{Q}_{0}' \cdot \bm{F}_{0} \dz = 0.
\end{aligned}
\end{equation}

Considering the first line, we find that, after integrating by parts and applying matching \eqref{e:MatchingCond1}-\eqref{e:MatchingCond2} for $\Phi_{0}$,
\begin{equation}\label{phase:innerfirstorder:line1}
\begin{aligned}
& \; \int_{\R} \frac{\gamma}{2c_{0}}  \left ( -\pd_{zz} \Phi_{1} \Phi_{0}' + \Phi_{1} (\psi'(\Phi_{0}))' + \kappa \abs{\Phi_{0}'}^{2} \right ) + \lambda_{0} \Phi_{0}' \dz \\
= & \; \frac{\gamma}{2c_{0}} \int_{\R} \pd_{z} \Phi_{1} \left( \Phi_{0}'' - \psi'(\Phi_{0}) \right) + \frac{\gamma}{2c_{0}} [\psi'(\Phi_{0}) \Phi_{1} - \Phi_{0}' \pd_{z}\Phi_{1}]_{z=-\infty}^{z=+\infty} \\
+ & \; \kappa \frac{\gamma}{2c_{0}} \underbrace{\int_{\R} \abs{\Phi_{0}'}^{2}\dz}_{=2c_{0}} + \lambda_{0}\underbrace{\int_{\R} \Phi_{0}' \dz}_{=2} = \kappa \gamma + 2 \lambda_{0},
\end{aligned}
\end{equation}
where we made use of \eqref{ODE}, the relation \eqref{const:c0}, and that $\kappa$ is independent of $z$. Thus it remains to identify
\begin{align}\label{phase:innerfirstorder:unknownterm}
\int_{\R}  \bm{F}_{0} \cdot \pd_{z} \bm{Q}_{0} + \hat{\alpha}(\Phi_{0}) \pd_{z} \bm{U}_{1} \cdot \bm{Q}_{0} \dz.
\end{align}

To this end, we take the scalar product of (\ref{adjoint1:innerfirstorder:sim:2}) with $\pd_{z} \bm{U}_{1}$ and use (\ref{state2:innerfirstorder}) to obtain
\begin{equation}\label{adjoint1:innerfirstorder:sim:multipliedpdzU1}
\begin{aligned}
& \; \hat{\alpha}(\Phi_{0}) \bm{Q}_{0} \cdot \pd_{z} \bm{U}_{1} - \mu \pd_{zz} \bm{Q}_{1} \cdot \pd_{z} \bm{U}_{1} \\
= & \; -\mu \pd_{z} \bm{U}_{1} \cdot \bm{a} \pd_{z}(\mathcal{M}_{0}(\Phi_{0}) \pd_{z}\Phi_{1} + \mathcal{M}_{1}(\Phi_{0})\Phi_{1} \Phi_{0}').
\end{aligned}
\end{equation}

Integrating \eqref{adjoint1:innerfirstorder:sim:multipliedpdzU1} over $\R$ with respect to $z$, and applying integration by parts leads to
\begin{equation}\label{adjoint1:innerfirstorder:integrated}
\begin{aligned}
& \; \int_{\R} \hat{\alpha}(\Phi_{0}) \bm{Q}_{0} \cdot \pd_{z} \bm{U}_{1} \dz \\
= & \; \mu \int_{\R} \pd_{zz} \bm{Q}_{1} \cdot \pd_{z} \bm{U}_{1} - \pd_{z} \bm{U}_{1} \cdot \bm{a} \pd_{z}(\mathcal{M}_{0}(\Phi_{0})\pd_{z} \Phi_{1} + \mathcal{M}_{1}(\Phi_{0}) \Phi_{1} \Phi_{0}') \dz \\
= & \; \mu \left [ \pd_{z} \bm{Q}_{1} \cdot \pd_{z} \bm{U}_{1} - \pd_{z} \bm{U}_{1} \cdot \bm{a} (\mathcal{M}_{0}(\Phi_{0}) \pd_{z} \Phi_{1} + \mathcal{M}_{1}(\Phi_{0}) \Phi_{1} \Phi_{0}') \right ]_{z = -\infty}^{z = +\infty} \\
- & \; \mu \int_{\R} \pd_{zz} \bm{U}_{1} \cdot (\pd_{z} \bm{Q}_{1} - \bm{a} (\mathcal{M}_{0}(\Phi_{0}) \pd_{z} \Phi_{1} + \mathcal{M}_{1}(\Phi_{0}) \Phi_{1} \Phi_{0}')) \dz.
\end{aligned}
\end{equation}

Using \eqref{assump:asymptotics:psi}, the matching conditions \eqref{e:MatchingCond1}, \eqref{e:MatchingCond2}, \eqref{e:MatchingCond3} for $\Phi_{0}$, and \eqref{e:MatchingCond3} for $\bm{Q}_{1}$ and $\bm{U}_{1}$, we see that the jump term is
\begin{equation}\label{adjoint1:innerfirstorder:integrated:jumpterm}
\begin{aligned}
\left [ \pd_{z} \bm{Q}_{1} \cdot \pd_{z} \bm{U}_{1} - \pd_{z} \bm{U}_{1} \cdot \bm{a} (\mathcal{M}_{0}(\Phi_{0}) \pd_{z} \Phi_{1} + \mathcal{M}_{1}(\Phi_{0}) \Phi_{1} \Phi_{0}') \right ]_{z = -\infty}^{z = +\infty} = [\pd_{\bm{\nu}} \bm{q}_{0} \cdot \pd_{\bm{\nu}} \bm{u}_{0}]_{-}^{+},
\end{aligned}
\end{equation}
since, in the case $\mathcal{M}(\varphi) = \frac{1}{2}$, we have $\mathcal{M}_{0}(\Phi_{0}) = \frac{1}{2}$ and $\mathcal{M}_{1}(\Phi_{0}) \Phi_{1} = 0$, while for the case $\mathcal{M}(\varphi) = \frac{1}{c_{0}} \sqrt{\frac{\psi(\varphi) + \delta_{\eps}}{2}}$, using \eqref{equipartition} and the matching conditions,  we have
\begin{align*}
& \; \left [ \pd_{z} \bm{U}_{1} \cdot \bm{a} (\mathcal{M}_{0}(\Phi_{0}) \pd_{z} \Phi_{1} + \mathcal{M}_{1}(\Phi_{0}) \Phi_{1} \Phi_{0}') \right ]_{z = -\infty}^{z = +\infty} \\
= & \; \frac{1}{\sqrt{2} c_{0}} \left [ \pd_{z} \bm{U}_{1} \cdot \bm{a} \left ( \sqrt{\psi(\Phi_{0})} \pd_{z}\Phi_{1} + \frac{\psi'(\Phi_{0}) \Phi_{1}}{\sqrt{2}} \frac{\Phi_{0}'}{\sqrt{2 \psi(\Phi_{0})}} \right ) \right ]_{z = -\infty}^{z = +\infty} = 0.
\end{align*}

Meanwhile, using \eqref{adjoint2:innerfirstorder} and \eqref{state1:innerfirstorder:simple}, the integral term is
\begin{equation}\label{adjoint1:innerfirstorder:integrated:integralterm}
\begin{aligned}
& \; \int_{\R} \mu \pd_{zz} \bm{U}_{1} \cdot (\pd_{z} \bm{Q}_{1} - \bm{a} (\mathcal{M}_{0}(\Phi_{0}) \pd_{z} \Phi_{1} + \mathcal{M}_{1}(\Phi_{0}) \Phi_{1} \Phi_{0}')) \dz \\
= & \; \int_{\R} - \pd_{z} P_{0} \bm{\nu} \cdot (\pd_{z} \bm{Q}_{1} - \bm{a} (\mathcal{M}_{0}(\Phi_{0}) \pd_{z} \Phi_{1} + \mathcal{M}_{1}(\Phi_{0}) \Phi_{1} \Phi_{0}')) \dz  = 0.
\end{aligned}
\end{equation}

Together with \eqref{pdzF0}, i.e., $\bm{F}_{0}$ is independent of $z$, we obtain from \eqref{adjoint1:innerfirstorder:integrated:jumpterm}, \eqref{adjoint1:innerfirstorder:integrated:integralterm} that \eqref{phase:innerfirstorder:unknownterm} is
\begin{equation}
\begin{aligned}
\int_{\R} \bm{F}_{0} \cdot \pd_{z} \bm{Q}_{0} + \hat{\alpha}(\Phi_{0}) \pd_{z} \bm{U}_{1} \cdot \bm{Q}_{0} \dz & = \bm{f}_{0} \cdot [\bm{q}_{0}]_{-}^{+} + \mu [\pd_{\bm{\nu}} \bm{q}_{0} \cdot \pd_{\bm{\nu}} \bm{u}_{0}]_{-}^{+} \\
& = \bm{f}_{0} \cdot \bm{a} + \mu \pd_{\bm{\nu}} \bm{q}_{0}^{+} \cdot \pd_{\bm{\nu}} \bm{u}_{0}^{+},
\end{aligned}
\end{equation}
as $\bm{q}_{0}^{-} = \bm{u}_{0}^{-} = \bm{0}$, and $\bm{q}_{0}^{+} = \bm{a}$ from \eqref{q0Omegaplus:complete}.  Thus, we obtain from \eqref{phase:innerfirstorder:integrated} the following solvability condition for $\Phi_{1}$:
\begin{equation*}
\begin{aligned}
2 \lambda_{0} + \kappa \gamma + \bm{f}_{0} \cdot \bm{a} + \mu \pd_{\bm{\nu}} \bm{q}_{0}^{+} \cdot \pd_{\bm{\nu}} \bm{u}_{0}^{+} = \bm{0} \text{ on } \Gamma.
\end{aligned}
\end{equation*}

\subsubsection{Sharp interface limit}
In summary, we obtain the following sharp interface limit:
\begin{subequations}\label{e:StateAndAdjointInLimit}\begin{align}
- \mu \Laplace \bm{u}_{0} + (\bm{u}_{0} \cdot \nabla) \bm{u}_{0} + \nabla p_{0} = \bm{f} & \text{ in } E, \\
- \mu \Laplace \bm{q}_{0} + (\nabla \bm{u}_{0})^{T} \bm{q}_{0} - (\bm{u}_{0} \cdot \nabla) \bm{q}_{0} + \nabla \pi_{0} = \bm{0} & \text{ in } E, \\
\div \bm{u}_{0} = 0, \quad \div \bm{q}_{0} = 0 & \text{ in } E, \\
\bm{u}_{0} = \bm{g}, \quad \bm{q}_{0} = \bm{0} & \text{ on } \pd \Omega \cap E, \\
\bm{u}_{0} = \bm{q}_{0} = \bm{0} & \text{ in } B,\\
\bm{u}_{0} = \bm{0},\quad \bm{q}_{0} = \bm{a} & \text{ on } \Gamma, \label{DirichletFreeBdy}
\end{align}\end{subequations}
together with the following gradient equation:
\begin{equation}\label{e:GradientEquationLimit}\begin{aligned}
\kappa \gamma + 2\lambda_{0} + \mu \pd_{\bm{\nu}} \bm{q}_{0} \cdot \pd_{\bm{\nu}} \bm{u}_{0} + \bm{f} \cdot \bm{a} = 0 \text{ on } \Gamma,
\end{aligned}
\end{equation}
which is consistent with the adjoint system \eqref{IntroAjointEquSharp} and the strong form of \eqref{ShapeDerivHydroDynamForce} from \cite{incoll:Boisgerault}, taking into account the volume constraint (see \eqref{generalh:ProofOptSysEpsVarEq}) and the additional perimeter regularization.

\begin{rem}[Linear scaling for the correction constant $\delta_{\eps}$]\label{rem:deltaepslinearscaling}
Suppose $\delta_{\eps} = \eps$, then we observe from \eqref{taylorexpansion} that
\begin{align*}
\sqrt{\psi(\Phi) + \eps} = \sqrt{\psi(\Phi_{0})} + \eps \frac{\psi'(\Phi_{0}) \Phi_{1} + 1}{2 \sqrt{\psi(\Phi_{0})}} + \mathrm{ h.o.t.},
\end{align*}
i.e., 
\begin{align*}
\mathcal{M}_{0}(\Phi_{0}) = \frac{1}{\sqrt{2} c_{0}} \sqrt{\psi(\Phi_{0})}, \quad \mathcal{M}_{1}(\Phi_{0})\Phi_{1} = \frac{1}{\sqrt{2} c_{0}}\frac{\psi'(\Phi_{0}) \Phi_{1} + 1}{2 \sqrt{\psi(\Phi_{0})}}.
\end{align*}
The presence of this extra factor of $\frac{1}{2 \sqrt{\psi(\Phi_{0})}}$ in $\mathcal{M}_{1}(\Phi_{0}) \Phi_{1}$ alters the jump term of \eqref{adjoint1:innerfirstorder:integrated} to
\begin{equation*}
\begin{aligned}
& \; \left [ \pd_{z} \bm{Q}_{1} \cdot \pd_{z} \bm{U}_{1} - \pd_{z} \bm{U}_{1} \cdot \bm{a} (\mathcal{M}_{0}(\Phi_{0}) \pd_{z} \Phi_{1} + \mathcal{M}_{1}(\Phi_{0}) \Phi_{1} \Phi_{0}') \right ]_{z = -\infty}^{z = +\infty} \\
= & \; [\pd_{\bm{\nu}} \bm{q}_{0} \cdot \pd_{\bm{\nu}} \bm{u}_{0}]_{-}^{+} - \frac{\bm{a}}{2 c_{0}} \cdot \left [ \frac{\Phi_{0}'}{\sqrt{2 \psi(\Phi_{0})}} \pd_{z} \bm{U}_{1} \right ]_{z=-\infty}^{z=+\infty} = \pd_{\bm{\nu}} \bm{q}_{0} \cdot \pd_{\bm{\nu}} \bm{u}_{0} - \frac{\bm{a}}{2 c_{0}} \cdot \pd_{\bm{\nu}} \bm{u}_{0},.
\end{aligned}
\end{equation*}
where we have used \eqref{equipartition}.  Thus, instead of \eqref{e:GradientEquationLimit}, we obtain\begin{align*}
\kappa \gamma + 2\lambda_{0} + \mu \pd_{\bm{\nu}} \bm{q}_{0} \cdot \pd_{\bm{\nu}} \bm{u}_{0} + \frac{\mu}{2 c_{0}} \pd_{\bm{\nu}} \bm{u}_{0} \cdot \bm{a} + \bm{f} \cdot \bm{a} = 0 \text{ on } \Gamma.
\end{align*}
\end{rem}

\section{Numerical computations}\label{sec:Numerics}
In this section we investigate the phase field approach numerically. We minimize the drag and maximize the lift-to-drag ratio of an obstacle in outer flow and apply both  phase field approximations of the corresponding surface functionals.
 
Concerning numerical results in the literature we refer to the minimization of the drag functional in \cite{article:SchmidtSchulz10, incoll:BrandenburgLindemannUlbrichUlbrich09}, where a sharp interface approach is used.  In \cite{article:Kondoh12} the porous medium approach is used, where the authors
argue, that the term $\alpha_{\eps}\bm{u}_\eps$ is a valid approximation for the
hydrodynamic force.

Let us start with defining the free energy $\psi$.  Here we use
\begin{equation}
\begin{aligned}
\tilde{\psi}(y) & = \frac{s}{2} \left({\max}^{2}(0,y - 1) + {\min}^{2}(0,y + 1)  \right ) + \frac{1}{2}(1 - y^{2}), \\
\psi(y) & = \tilde{\psi} \left(\frac{s}{s-1} y \right) + \frac{1}{2(s-1)}.
\end{aligned}
\end{equation}

Note that $\tilde{\psi}$ can be obtained by using a Moreau--Yosida relaxation of the double--obstacle free energy (\ref{doubleobstacle}) with the relaxation (or penalization) parameter $s \gg 1$, and the scaling of the argument and the shifting are chosen such that $\psi$ has its minima at $y = \pm 1$ with
$\psi(\pm 1) = 0$.

We further introduce the convex-concave splitting
\begin{align*}
\psi & = \psi_{+} + \psi_{-}, \\
\psi_{+}(y) & = \frac{s}{2} \left(
 {\max}^{2} \left(0,\frac{s}{s-1}y - 1 \right) +  {\min}^{2} \left(0,\frac{s}{s-1} y + 1 \right) \right),\\
\psi_{-}(y) & = \frac{1}{2} \left( 1 - \left(\frac{s}{s-1}y \right)^{2}\right) + \frac{1}{2(s-1)},
\end{align*}
where $\psi_{+}$ is the convex part of $\psi$ and $\psi_{-}$ is its concave part.

Next we define the interpolation function $\alpha_{\eps}$ as
\begin{align}
\alpha_{\eps}(y) = \frac{\overline{\alpha}}{\eps}
\begin{cases}
0  & \text{ if } y \geq 1,\\
\frac{1}{(1-\theta)(3+\theta)}(y-1)^{2}
& \text{ if } 1> \varphi \geq \theta,\\
\min \left( 1+\frac{2}{3+\theta},1-\frac{2}{3+\theta}(y+1) \right) & \text{ if } \theta > \varphi,
\end{cases}
\end{align}
where $\overline{\alpha}$ is a given constant, and we
choose $\theta = 0.99$.  This function $\alpha_{\eps}(y)$ describes a linear function between $y = -2$ and
$y = \theta$ and has a quadratic extension between
$y = \theta$ and $y = 1$.  We fulfill
Assumption \ref{assump:alpha} with $s_{a} = -2$ and $s_{b} = 1$.  Note that we do not fulfill
the regularity $\alpha_{\eps} \in C^{1,1}(\mathbb{R})$ at $s_{a}$.  But this is not a severe violation since in practice it holds that $-2 < \varphi_{\eps}$ and we
can control the violation of the bound $-1 \leq \varphi_{\eps}$ by choosing an appropriate relaxation parameter $s$.
 
For solving the optimization problem \eqref{IntroObjFcltPhase} we use a
mass conserving $H^{-1}$-gradient flow approach, following \cite{garckehinzeetal}.  For this purpose we introduce an artificial time variable $t$ and solve the following evolution equation for the phase field variable $\varphi_{\eps}(t)$ which is obtained from \eqref{generalh:PhaseFieldOptSys}:
\begin{equation}\label{eq:num:gradflow}
\begin{aligned}
\pd_{t} \varphi_{\eps} & = \Laplace w_{\eps}, \\
w_{\eps} & =  - \gamma \eps \Laplace \varphi_{\eps} + \frac{\gamma}{\eps} \psi'(\varphi_{\eps}) + \alpha'_{\eps}(\varphi_{\eps}) \left ( \frac{1}{2} \abs{\bm{u}_{\eps}}^{2} - \bm{u}_{\eps} \cdot \bm{q}_{\eps} \right ) + J_{\varphi},\\
J_{\varphi} & = \mathcal{M}'(\varphi_{\eps}) h(x, \nabla \bm{u}_{\eps}, p_{\eps}, \nabla \varphi_{\eps}) -\div \left( \mathcal{M}(\varphi_\eps) \der_{4}h(x, \nabla \bm{u}_{\eps}, p_{\eps}, \nabla \varphi_{\eps}) \right),
\end{aligned}
\end{equation}
where $\bm{u}_{\eps}$ is obtained from \eqref{IntroStateEquPhase}, $\bm{q}_{\eps}$ is obtained from \eqref{generalh:adjointsystem} and $J_\varphi$ abbreviates the terms arising from the differentiation of the functional $h$, as shown in Theorem \ref{t:GeneralisationOptimality}.  Note that we include the factor $\frac{1}{2c_0}$ into the parameter $\gamma$.  Using the gradient flow approach allows us to use nonlinear parts of the gradient, for example the derivative of $\psi_{+}$, implicitly in time in a time stepping scheme, which for the chosen free energy is favorable in view of
stability reasons.

After time discretization with variable time step size $\tau^{k+1}$ we at each
time instance solve the following problem:\\

Given $\varphi_{\eps}^{k}$, find $\varphi_{\eps}^{k+1}$, $w_{\eps}^{k+1}$, $\bm{u}_{\eps}$, $p_{\eps}$, $\bm{q}_{\eps}$, and $\pi_{\eps}$ fulfilling
the primal system
\begin{equation}\label{eq:num:TD_primal}
\begin{aligned}
\alpha_{\eps}(\varphi_{\eps}^{k}) \bm{u}_{\eps} - \mu \Laplace \bm{u}_{\eps} + (\bm{u}_{\eps} \cdot \nabla)\bm{u}_{\eps} + \nabla p_{\eps} & = \bm{f} && \text{ in }\Omega, \\
\div \bm{u}_{\eps} & = 0 && \text{ in }\Omega, \\
\bm{u}_{\eps} &= \bm{g} &&\text{ on } \pd \Omega,
\end{aligned}
\end{equation}
the adjoint system
\begin{equation}\label{eq:num:TD_adjoint}
\begin{aligned}
\alpha_{\eps} & (\varphi_{\eps}^{k})\bm{q}_{\eps} - \mu \div \left (\nabla \bm{q}_{\eps} + (\nabla \bm{q}_{\eps})^{T} \right) + (\nabla \bm{u}_{\eps})^{T} \bm{q}_{\eps} - (\bm{u}_{\eps} \cdot \nabla) \bm{q}_{\eps} + \nabla \pi_{\eps} \\
 & =  \alpha_{\eps}(\varphi_{\eps}^{k}) \bm{u}_{\eps} - \div \left (\mathcal{M}(\varphi_{\eps}^{k}) \der_{2} h \right) &&\text{ in }\Omega,\\
\div \bm{q}_{\eps} & = - \mathcal{M}(\varphi_{\eps}^{k})\der_{3} h + \vartheta_{\eps} && \text{ in }\Omega, \\
\bm{q}_{\eps} & = \bm{0} && \text{ on }\pd \Omega,
\end{aligned} 
\end{equation}
and the Cahn--Hilliard system
\begin{equation}\label{eq:num:TD_phase}
\begin{aligned}
\varphi_{\eps}^{k+1} & = \tau^{k+1} \Laplace w_{\eps}^{k+1} + \varphi_{\eps}^{k} && \text{ in }\Omega,\\
w_{\eps}^{k+1} & =  - \gamma \eps \Laplace \varphi_{\eps}^{k+1} + \frac{\gamma}{\eps} \left(\psi'_{+}(\varphi_{\eps}^{k+1}) + \psi'_{-}(\varphi_{\eps}^{k})\right) \\
& + \frac{1}{2} \alpha'_{\eps}(\varphi_{\eps}^{k+1})    \abs{\bm{u}_{\eps}}^{2} - \alpha'_{\eps}(\varphi_{\eps}^{k}) \bm{u}_{\eps} \cdot \bm{q}_{\eps}    + J_{\varphi} && \text{ in }\Omega,\\
J_{\varphi} & = \mathcal{M}'(\varphi_{\eps}^{k}) h(x, \nabla \bm{u}_{\eps}, p_{\eps}, \nabla    \varphi_{\eps}^{k+1})\\
& - \div \left( \mathcal{M}(\varphi_{\eps}^{k}) \der_{4}h(x, \nabla \bm{u}_{\eps}, p_{\eps}, \nabla \varphi_{\eps}^{k+1}) \right),\\
0 & = \gamma\eps \nabla \varphi_{\eps}^{k+1} \cdot \bm{\nu}_{\pd \Omega} 
+ \mathcal M(\varphi_\eps^{k})\bm \nu_{\pd \Omega} \cdot
D_4h &&\text{ on } \pd \Omega,\\ 
0 &=   \nabla w_{\eps}^{k+1}\cdot \bm{\nu}_{\pd \Omega}  &&\text{ on } \pd
\Omega.
\end{aligned}
\end{equation}
As noted above, we evaluate $\psi'_{+}$ at the new time instance for stability reasons.  
 
For the spatial discretization piecewise linear and globally continuous finite
elements are used for the variables $\varphi_{\eps}^{k+1}$, $w_{\eps}^{k+1}$, $p_{\eps}$, and $\pi_{\eps}$, while piecewise quadratic and globally continuous elements are used for $\bm{u}_{\eps}$ and $\bm{q}_{\eps}$.  The meshes are adapted using the jumps of the normal derivative of $\varphi_{\eps}^{k+1}$ and $w_{\eps}^{k+1}$ over edges of the underlying
discretization mesh, see  \cite{article:CarstensenVerfuerth99,
book:Verfuerth_Adaptivity}, together
with a D\"{o}rfler marking \cite{article:Doerfler96}.

\subsection{Minimization of the hydrodynamic force of an obstacle}\label{ssec:num:minHydro}
We investigate the minimization of the drag of an obstacle of fixed area in a channel flow with block inflow profile.
 
The computational domain is $\Omega = (0,1.7) \times (0,0.4)$. The initial phase field $\varphi^{0}$ is defined as a circle of radius $r = 0.05$ with center at $M = (0.5,0.2)$.  The boundary velocity is set to
$\bm g(x,y) = (1,0)^{T}$.  We fix $\delta_{\eps} = 0$,  $s= 1 \times 10^{6}$, and $\bm{f} \equiv \bm{0}$.  We further set 
\begin{align*}
\tau^{k+1} := \xi \min_{T} (h_{T} \norm{\nabla w_{\eps}^{k}}_{L^{2}(T)}^{-1}),
\end{align*}
where the minimization is carried out over all triangles $T$.  Here, the diameter of triangle $T$ is denoted by $h_{T}$, and $\xi$ is a positive scaling parameter typically set to $\xi=5$.  This CFL-like condition prevents the interfacial region from moving too fast for the adaptation process.

We restate the definition of the phase field approximation of the hydrodynamic force in a direction $\bm{a}$ as
\begin{align}\label{eq:num:hydroForce}
F^{\bm{a}} := \int_{\Omega} \mathcal{M}(\varphi_{\eps}) \nabla \varphi_{\eps} \cdot \left(
\mu \left( \nabla \bm{u}_{\eps} + (\nabla \bm{u}_{\eps})^{T} \right) - p_{\eps} \id \right) \cdot \bm{a} \dx. 
\end{align}
When $\bm{a}$ is equal to the direction of the flow, i.e., $\bm{a} = (1,0)^{T}$, we denote the resulting approximation as $F^{D}$, which corresponds to the drag of the obstacle.  Meanwhile, if $\bm{a}$ is perpendicular to the direction of the flow, i.e., $\bm{a} = (0,1)^{T}$, then we denote the resulting approximation as $F^{L}$, which corresponds to the lift of the obstacle.

From \eqref{e:AdjointStrong} and \eqref{e:PhaseFieldGradientEquStrongRewrite}, the terms arising from the derivatives of $h$ in systems \eqref{eq:num:TD_adjoint} and \eqref{eq:num:TD_phase} in the present setting are given as
\begin{align*}
(- \div \left (\mathcal{M}(\varphi_{\eps}^{k}) \der_{2}h \right),\bm{v})& = \mu \int_{\Omega} \mathcal{M}(\varphi_{\eps}^{k}) \nabla \varphi_{\eps}^{k} \cdot \left(\nabla \bm{v} + (\nabla \bm{v})^{T} \right) \bm{a} \dx && \forall \bm{v} \in \bm{H}^{1}_{0}(\Omega), \\
(-\mathcal{M}(\varphi_{\eps}^{k}) \der_{3}h + \vartheta_{\eps}, \eta) &= \int_{\Omega}
 \left( \mathcal{M}(\varphi_{\eps}^{k}) \nabla  \varphi_{\eps}^{k} \cdot \bm{a} - \strokedint_{\Omega}
\mathcal{M}(\varphi_{\eps}^{k}) \nabla \varphi_{\eps}^{k} \cdot \bm{a} \dx \right) \eta \dx && \forall \eta \in L^{2}_{0}(\Omega), \\
(J_{\varphi}, \zeta) &= \int_{\Omega} \mathcal{M}(\varphi_{\eps}^{k}) \left( - \alpha_{\eps}(\varphi_{\eps}^{k}) \bm{u}_{\eps} - (\bm{u}_{\eps} \cdot \nabla ) \bm{u}_{\eps} \right) \cdot \bm{a} \zeta \dx && \forall \zeta \in H^{1}(\Omega).
\end{align*}

Next, we report on the numerical results for the case of minimizing $F^{D}$.  The parameters are chosen as
$\eps = 0.00025$, $\overline{\alpha} = 0.03$, $\mu = 0.001$, and $\gamma=0.01$.  We note that we use path-following with respect to the value of $\mu$, starting from $\mu=0.01$, and also for the value of $\gamma$, starting from $\gamma=0.1$.  In Figure \ref{fig:num:results_drag} we show results obtained with our approach.

\begin{figure} 
  \centering
\includegraphics[height=3cm]{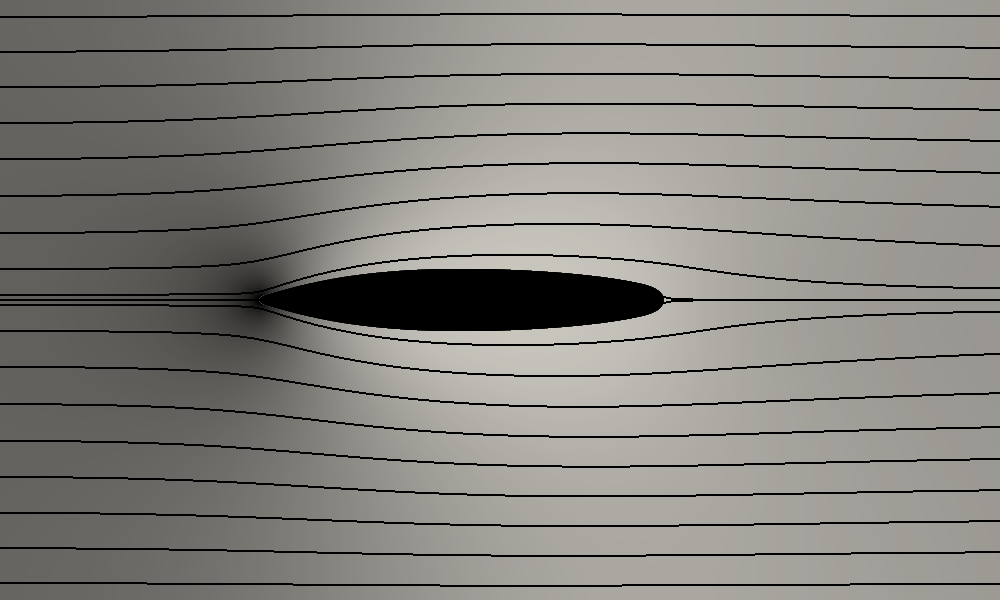}
  \includegraphics[height=3cm]{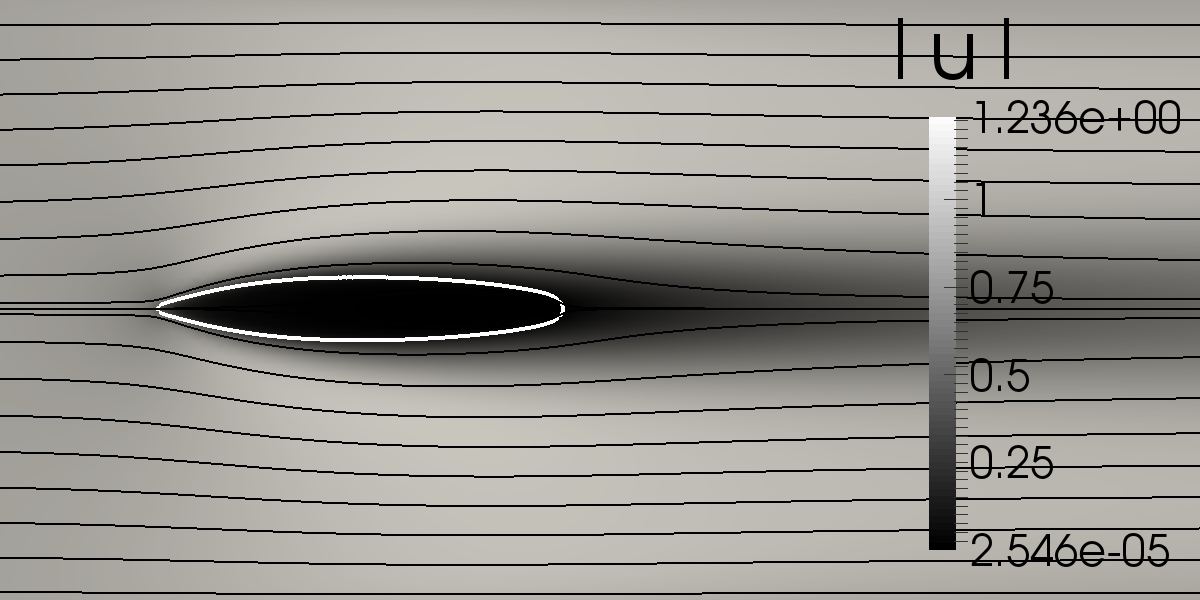}
\caption{Result for minimizing the drag using $\mathcal{M}(\varphi_{\eps}) = \frac{1}{c_{0}}\sqrt{\frac{\psi(\varphi_{\eps})}{2}}$.  In the left plot we show the obstacle (i.e., $\varphi_{\eps} \leq 0$) and streamlines of $\bm{u}_{\eps}$ in black, and the pressure outside of the obstacle in gray.  Darker gray means higher pressure.  On the right we show $\abs{\bm{u}_{\eps}}$ in gray, where darker gray means lower velocity.  The isoline $\varphi_{\eps} \equiv 0$ is shown in white and again streamlines are displayed in black.  The results for $\mathcal  M(\varphi_{\eps}) = \frac{1}{2}$ are visually indistinguishable from these results.  Note that we only show the computational domain in the neighbourhood of the obstacle.}
\label{fig:num:results_drag}
\end{figure}
 
The drag for $\mathcal{M}(\varphi_{\eps}) =
\frac{1}{c_0}\sqrt{\frac{\psi(\varphi_{\eps})}{2}}$
is given by $F^{D} = 3.9454 \times 10^{-2}$ $(3.9492 \times 10^{-2})$, and for $\mathcal{M}(\varphi_{\eps}) = \frac{1}{2}$ we have $F^{D} = 3.9117 \times 10^{-2}$ $(3.9499 \times 10^{-2})$.  In brackets we give the drag obtained by evaluating the surface formulation over the isoline $\varphi_{\eps} \equiv 0$.  We see that both formulations give very similar results.

\subsection{Maximization of the lift-drag ratio of an obstacle}\label{ssec:maxLD}
Based on the results of the previous section we now investigate the maximization of the lift-to-drag ratio given by
\begin{align*}
R := F^{L} / F^{D},
\end{align*}
To this end, we consider
\begin{align*}
  \int_\Omega \mathcal M(\varphi_{\eps} )h(x, \nabla \bm{u}_{\eps}, p_{\eps}, \nabla \varphi_{\eps}) \dx := - \frac{ \int_{\Omega} \mathcal{M}(\varphi_{\eps}) \nabla \varphi_{\eps} \cdot (\mu (\nabla \bm{u}_{\eps} + (\nabla \bm{u}_{\eps})^{T}) - p_{\eps} \id) \bm{a}^{\perp} \dx}{\int_{\Omega} \mathcal{M}(\varphi_{\eps}) \nabla \varphi_{\eps} \cdot (\mu(\nabla \bm{u}_{\eps} + (\nabla \bm{u}_{\eps})^{T}) - p_{\eps} \id) \bm{a} \dx},
\end{align*}
with $\bm{a} = (1,0)^{T}$ and $\bm{a}^{\perp} = (0,1)^{T}$.

The numerical setup is the same as in the previous section and the parameters are chosen as $\eps = 0.0005$, $\overline{\alpha} = 4$, $\mu = 1/15$,
and $\gamma=0.3$.  In this example we fix the y-coordinate of the center of mass of the obstacle
by a Lagrange multiplier approach in order to keep it fixed at the initial position.  We define the center of mass of the obstacle as
\begin{align*}
\mathrm{com} =  \frac{ \int_{\Omega} \frac{1-\varphi_{\eps}}{2} x \dx}{  \int_{\Omega} \frac{1-\varphi_{\eps}}{2}\dx}.
\end{align*}
In Figure \ref{fig:num:results_liftdrag} we show results for this parameter set.

\begin{figure}
\centering
\includegraphics[height=3cm]{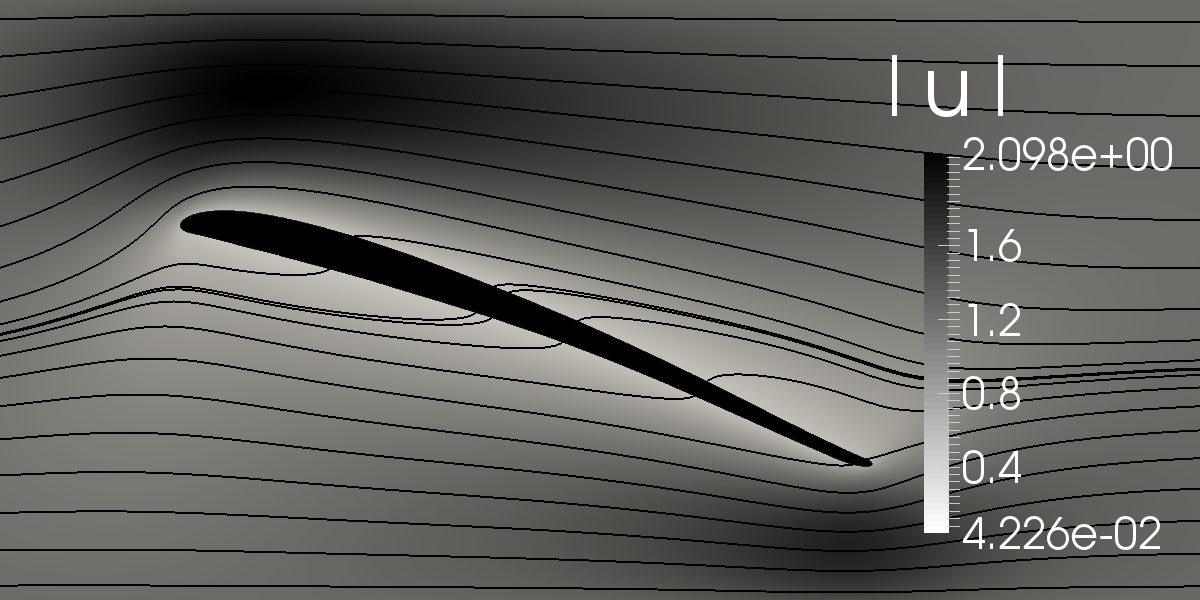}
\includegraphics[height=3cm]{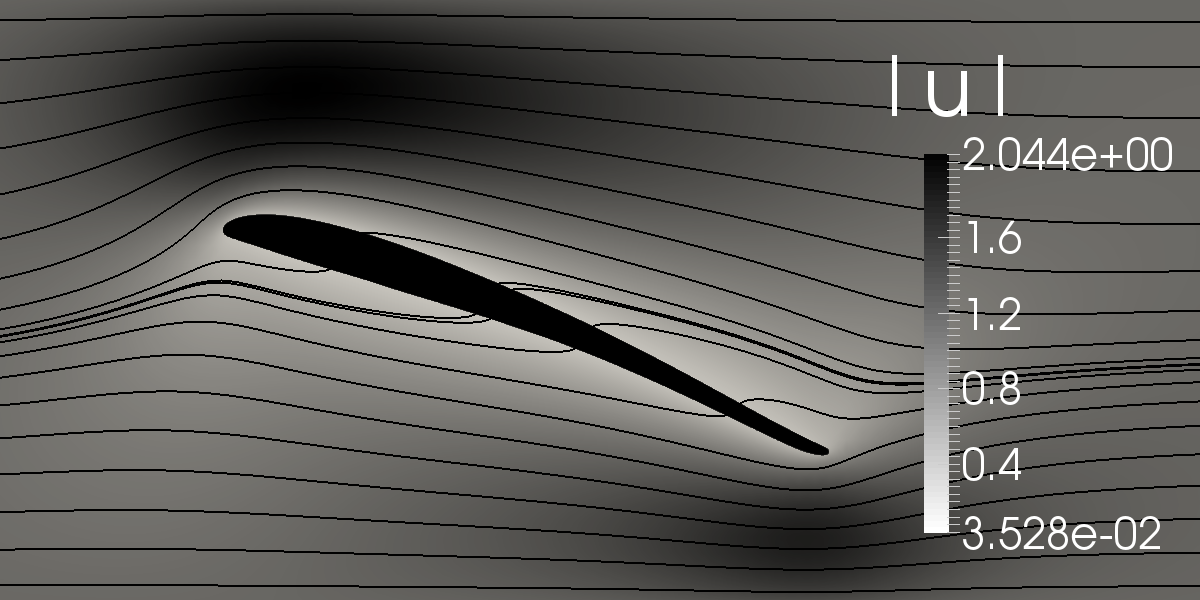}
\caption{Result for maximizing the lift-to-drag ratio using $\mathcal{M}(\varphi_{\eps}) = \frac{1}{c_{0}}\sqrt{\frac{\psi(\varphi_{\eps})}{2}}$ (left) and $\mathcal{M}(\varphi_{\eps}) = \frac{1}{2}$ (right).   The obstacle (i.e., $\varphi_{\eps} \leq 0$) and streamlines are shown in black and the velocity magnitude in gray.  Darker gray means larger velocity.   Note that we only show the computational domain in the neighbourhood of the obstacle.}
\label{fig:num:results_liftdrag}
\end{figure}
  
We observe the expected optimal shape for both formulations, but for $\mathcal{M}(\varphi_{\eps}) 
= \frac{1}{c_{0}}\sqrt{\frac{\psi(\varphi_{\eps})}{2}}$ we obtain a longer and thinner obstacle. 
 
The lift-to-drag ratio for
$\mathcal{M}(\varphi_{\eps}) =
\frac{1}{c_{0}}\sqrt{\frac{\psi(\varphi_{\eps})}{2}}$ is $R = 1.1104$, and for 
$\mathcal{M}(\varphi_{\eps}) = \frac{1}{2}$ it is $R= 0.9885$.  We stress that, here we calculate with a rather small value of $\mu = 1/15$ and that the minimal magnitude of velocity inside the obstacle is $4 \times 10^{-2}$, which is rather large.  However, we think that the results are a promising starting point for further investigations.


\end{document}